\documentclass[11pt]{amsart}

\usepackage{amsmath, amsthm, amssymb, tikz, hyperref, enumerate,array, fullpage,ytableau,shuffle}

\usetikzlibrary{calc}

\usepackage{amsfonts}
\usepackage{fullpage}
\usepackage[enableskew,vcentermath]{youngtab}
\usetikzlibrary{decorations.pathreplacing,patterns}

\theoremstyle{plain}
\newtheorem{theorem}{Theorem}[section]

\newtheorem{lemma}[theorem]{Lemma}
\newtheorem{corollary}[theorem]{Corollary}

\newtheorem{problem}[theorem]{Problem}
\theoremstyle{definition}
\newtheorem{defn}[theorem]{Definition}
\newtheorem{example}[theorem]{Example}

\newtheorem{remark}[theorem]{Remark}
\newtheorem{observation}[theorem]{Observation}

\numberwithin{equation}{section}

\newcommand{\makeset}[2]{ \left\{#1\,|\, #2\right\} }

\newcommand{\ZZ}{{\mathbb {Z}}}

\newcommand{\bij}{\Psi}
\renewcommand{\L}{{\mathcal{L}}}

\newcommand{\Des}{{\operatorname{Des}}}
\newcommand{\cDes}{{\operatorname{cDes}}}

\newcommand{\cdes}{{\operatorname{cdes}}}
\newcommand{\des}{{\operatorname{des}}}

\newcommand{\SYT}{{\operatorname{SYT}}}

\newcommand\red[1]{{\color{red}#1}}

\newcommand{\symm}{{\mathfrak{S}}}

\newcommand{\TTT}{{\mathcal{T}}}

\newcommand{\cA}{{\mathcal{A}}}
\newcommand{\cB}{{\mathcal{B}}}

\newcommand{\cV}{{\mathcal{V}}}
\newcommand{\cH}{{\mathcal{H}}}
\newcommand{\cU}{{\mathcal{U}}}

\newcommand{\cW}{{\mathcal{W}}}

\newcommand{\bx}{ {\bf x} }

\newcommand{\fS}{{\mathfrak{S}}}

\newcommand{\Q}{{\mathcal Q}}
\newcommand{\F}{\mathcal{F}}
\newcommand{\A}{\mathcal{A}}

\newcommand{\compkgamma}{{\mathcal C}^\gamma_{k,n-k}}

\begin{document}
	
	\title[Cyclic Schur-positive sets]
	{On cyclic Schur-positive sets of permutation}

\author{Jonathan Bloom}
\address{Department of Mathematics, Lafayette College, Easton, PA 18042, USA}
\email{bloomjs@lafayette.edu}

\author{Sergi Elizalde}
\address{Department of Mathematics, Dartmouth College, Hanover, NH 03755, USA}
\email{sergi.elizalde@dartmouth.edu}

\author{Yuval Roichman}
\address{Department of Mathematics, Bar-Ilan University, Ramat-Gan 52900, Israel}
\email{yuvalr@math.biu.ac.il}

	\date{\today}

	\thanks{JB was partially supported by a Bar-Ilan University visiting grant. SE was partially supported by Simons Foundation grant \#280575. YR was partially supported by an MIT-Israel MISTI grant and by the Israel Science Foundation, grant no.\ 1970/18. 
}
	
	\maketitle

	\begin{abstract}
We introduce a notion of {\em cyclic Schur-positivity} for sets of permutations, which naturally extends the classical notion of Schur-positivity, and it involves the existence of a bijection from permutations to standard Young tableaux that preserves the cyclic descent set.
Cyclic Schur-positive sets of permutations are always Schur-positive, but the converse does not hold, as exemplified by inverse descent classes, Knuth classes and conjugacy classes. 

In this paper we show that certain classes of permutations invariant under either horizontal or vertical rotation
are cyclic Schur-positive.
The proof unveils a new equidistribution phenomenon of descent sets on permutations, provides affirmative solutions to conjectures from~\cite{ERSchur} and~\cite{AGRR}, and 
yields new examples of Schur-positive sets.

	\end{abstract}

\tableofcontents

\section{Introduction}\label{sec:introduction}

	Let $[n]:=\{1,2,\dots,n\}$ and let $\symm_n$ denote the symmetric group on $[n]$. Recall that the {\em descent set} of a permutation $\pi\in\symm_n$ is
	\[
	\Des(\pi) := \{i\in [n-1]:\ \pi(i)>\pi(i+1)\}. 
	\]

	Given any subset $A\subseteq \symm_n$, we define the
	quasi-symmetric function
\begin{equation}\label{eq:QA}
	\Q(A) := \sum\limits_{\pi\in A} \F_{n,\Des(\pi)},
\end{equation}	
	where $\F_{n,D}$ is Gessel's {\em fundamental quasi-symmetric function}~\cite{Gessel}, defined for $D\subseteq [n-1]$ by
\[
	\F_{n,D}:=\sum\limits_{i_1\le i_2\le \cdots \le i_n \atop
	i_j<i_{j+1} \ \text{if}\ j\in D} x_{i_1}x_{i_2}\cdots x_{i_n}.	    
\]

	A symmetric function is called {\em Schur-positive} if all the
	coefficients in its expansion in the basis of Schur functions are nonnegative.
A subset $A \subseteq \symm_n$ is called {\em Schur-positive} if $\Q(A)$ is symmetric and Schur-positive. 

The following long-standing problem was first addressed in~\cite{Gessel-Reutenauer}.
\begin{problem}
	Find Schur-positive subsets of $\symm_n$. 
\end{problem}

It is possible to characterize Schur-positive permutation sets using standard Young tableaux (SYT). 
We write $\lambda/\mu\vdash n$ to mean that $\lambda/\mu$ is a skew shape with $n$ boxes, where $\lambda$ and $\mu$ are partitions such that the Young diagram of $\mu$ is contained in that of $\lambda$. Let $\SYT(\lambda/\mu)$ denote the set of standard Young tableaux of shape $\lambda/\mu$.  We draw tableaux in English notation, as in Figure~\ref{fig:SYT}. The {\em descent set} of $T \in \SYT(\lambda/\mu)$ is
\begin{equation}\label{eq:defn_Des_SYT} 
\Des(T) := \{i\in [n-1]:\ i+1 \textrm{ is in a lower row than $i$ in $T$}\}.    
\end{equation}
For example, the descent set of the SYT in Figure~\ref{fig:SYT} is $\{1,4,5,8\}$.
	\begin{figure}[htb]
	$$\young(:1348,:257,69)$$
	\caption{A SYT of shape $(5,4,2)/(1,1)$.} 
	\label{fig:SYT}
\end{figure}
For $J\subseteq [n-1]$, let ${\bf x}^J:=\prod\limits_{i\in J}x_i$.

\begin{theorem}[{\cite[Prop. 9.1]{Adin-R}}]\label{thm:Sp-combin}
A subset $A\subseteq \symm_n$ is Schur-positive if and only if there exist nonnegative integers $(m_\lambda)_{\lambda\vdash n}$ such that

\[
\sum\limits_{\pi\in A} {\bf x}^{\Des(\pi)}
    =\sum\limits_{\lambda\vdash n} m_\lambda\sum\limits_{T\in \SYT(\lambda)}{\bf x}^{\Des(T)}. 
\]
\end{theorem}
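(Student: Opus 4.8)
The plan is to encode the fundamental quasi-symmetric expansion of $\Q(A)$ by squarefree monomials through a single linear isomorphism, so that the claimed identity becomes a direct restatement of Schur-positivity.

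I would start from two classical facts about $\QSym_n$, the degree-$n$ homogeneous component of $\QSym$: the fundamental functions $\{\F_{n,D}:D\subseteq[n-1]\}$ form a $\ZZ$-basis of $\QSym_n$, and Gessel's identity $s_\lambda=\sum_{T\in\SYT(\lambda)}\F_{n,\Des(T)}$ holds for every $\lambda\vdash n$. Using the first fact, define the $\ZZ$-module isomorphism $\phi\colon\QSym_n\to V_n$ by $\phi(\F_{n,D})=\bx^{D}$, where $V_n$ is the $\ZZ$-span of $\{\bx^{D}:D\subseteq[n-1]\}$ inside $\ZZ[x_1,\dots,x_{n-1}]$. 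By the definition of $\Q(A)$ in~\eqref{eq:QA} we then have $\phi(\Q(A))=\sum_{\pi\in A}\bx^{\Des(\pi)}$, and by Gessel's identity $\phi(s_\lambda)=\sum_{T\in\SYT(\lambda)}\bx^{\Des(T)}$.

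With this in hand the equivalence is formal, in both directions. If $A$ is Schur-positive then $\Q(A)$ is symmetric; since the Schur functions $\{s_\lambda\}_{\lambda\vdash n}$ form a $\ZZ$-basis of the degree-$n$ symmetric functions and $\Q(A)$ has integer coefficients, we may write $\Q(A)=\sum_{\lambda\vdash n}m_\lambda s_\lambda$ with $m_\lambda\in\ZZ$, and Schur-positivity forces $m_\lambda\ge 0$; applying $\phi$ to this equality yields the displayed identity. Conversely, the displayed identity says precisely $\phi(\Q(A))=\phi\big(\sum_\lambda m_\lambda s_\lambda\big)$, so injectivity of $\phi$ gives $\Q(A)=\sum_\lambda m_\lambda s_\lambda$ in $\QSym_n$; the right-hand side is symmetric and Schur-positive, hence so is $\Q(A)$, i.e. $A$ is Schur-positive. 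The two ingredients that do the real work are Gessel's expansion of $s_\lambda$ and the linear independence of the $\F_{n,D}$ (needed for $\phi$ to be well defined and bijective); both are standard, so the only point I would write out carefully is the verification that $\phi$ is indeed an isomorphism, after which everything else is bookkeeping.
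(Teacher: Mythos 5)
Your proof is correct, and it is the standard argument: the paper does not prove this statement itself (it is quoted from Adin--Roichman, Prop.~9.1), but the same device --- the linear isomorphism ${\bf x}^J \leftrightarrow \F_{n,J}$ combined with Gessel's identity $s_\lambda=\sum_{T\in \SYT(\lambda)}\F_{n,\Des(T)}$ --- is exactly what the authors invoke in the converse direction of their proof of Theorem~\ref{thm:cSp}. The only point worth writing out, as you note, is that integrality of the $m_\lambda$ follows from the $\F_{n,D}$ and the $s_\lambda$ being $\ZZ$-bases, which you handle correctly.
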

This characterization of Schur-positive sets of permutations is useful because it does not require computing quasisymmetric functions, but rather finding a $\Des$-preserving bijection from permutations to SYT of shapes given by a certain multiset.

\medskip

In this paper we introduce and study a cyclic analogue of Schur-positive permutation sets, whose definition is motivated by Theorem~\ref{thm:Sp-combin}. Before we state Definition~\ref{def:cSp}, we need some background on cyclic descent sets.

The	{\em cyclic descent set} for permutations was introduced by Cellini~\cite{Cellini} and later studied by Dilks, Petersen, Stembridge~\cite{DPS}, and others. For $\pi\in \symm_n$ let
\begin{equation}\label{eq:cellini}
\cDes(\pi) := \{i\in [n]:\ \pi(i) > \pi(i+1)\},
\end{equation}
with the convention $\pi(n+1):=\pi(1)$.

\begin{example}
	For $\pi=21453\in\symm_5$, we have
		$\Des(\pi)=\{1,4\}$ and
		$\cDes(\pi)=\{1,4,5\}$.
\end{example}

The cyclic descent set for rectangular SYT was introduced by  Rhoades~\cite{Rhoades}, and extended to some other shapes in~\cite{AER,ER16}. This notion was generalized to all skew shapes that are not connected ribbons in~\cite{ARR}.
An explicit combinatorial description of cyclic descent sets on $\SYT(\lambda/\mu)$ for every skew shape which is not a connected ribbon
was recently given by  
Huang~\cite{Huang}. The following is the main definition in this paper.

\begin{defn}
\label{def:cSp}
    A subset $A\subseteq \symm_n$ is {\em cyclic Schur-positive} ({\em cSp}) if 
    there exists a collection of nonnegative integers $(m_{\lambda/\mu})_{\lambda/\mu\, \vdash n}$ such that
\begin{equation}\label{eq:cSp}
    \sum\limits_{\pi\in A} {\bf x}^{\cDes(\pi)}=\sum\limits_{\lambda/\mu\, \vdash n} m_{\lambda/\mu}\sum\limits_{T\in \SYT(\lambda/\mu)}{\bf x}^{\cDes(T)},         
\end{equation}
where $\cDes(\pi)$ is defined by Eq.~\eqref{eq:cellini},
$\cDes(T)$ is the cyclic descent set defined in~\cite{ARR, Huang}, and
the sum in the RHS is over skew shapes $\lambda/\mu$ that are not connected ribbons (and thus cyclic descent sets exist for SYT of these shapes).
\end{defn}

In Theorem~\ref{thm:cSp} we give
an alternative characterization of cyclic Schur-positive permutation sets using an invariance property of cyclic descent sets.

It will follow from this characterization 
that cyclic Schur-positive sets of permutations are always Schur-positive. However, the converse does not hold; in fact,
most known examples of Schur-positive sets are not cyclic Schur-positive.
One of our goals is to address the following problem.

\begin{problem}
Find cyclic Schur-positive (cSp) subsets of $\symm_n$.
\end{problem}

In this paper we present two families of cSp sets of permutations: horizontal rotations of Schur-positive sets, and vertical rotations of inverse descent classes, which include arc permutations. Let us now define these concepts.

To define horizontal and vertical rotations, let $c_n$ denote the $n$-cycle $(1,2,\dots,n)=23\dots n1\in \symm_n$, and let $C_n:=\langle c_n \rangle$ be the cyclic subgroup generated by $c_n$.

Any set $A\subseteq\symm_{n-1}$ can be interpreted as a subset of $\symm_n$ by identifying $\symm_{n-1}$ with the set of permutations in $\symm_n$ that fix $n$. With this interpretation, we define the \emph{horizontal (respectively, vertical) rotation closure} of $A\subseteq\symm_{n-1}$ as the set $AC_n\subseteq \symm_n$ (respectively, $C_nA\subseteq \symm_n$).
Our first main result (Theorem~\ref{thm:horizontal-main1}) states that
if $A \subseteq \symm_{n-1}$ is Schur-positive, then its 
horizontal rotation closure $AC_n\subseteq \symm_n$ is cSp.
As a consequence, we show in Corollary~\ref{cor:cdes} that the set of permutations whose inverses have a given number of cyclic descents is cSp.

For every $J\subseteq [n-2]$, define the  {\em descent class}
\[
D_{n-1,J}:=\{\pi\in \symm_{n-1}:\ \Des(\pi)=J\}.
\]
The inverse of a set of permutations $A\subseteq\symm_n$ is defined as $A^{-1}:=\{\pi^{-1}:\pi\in A\}$.
Our second main result (Theorem~\ref{thm:equid-main1}) implies that for any inverse descent class, the distribution of the statistic $\cDes$ is the same on its vertical rotations $C_n D_{n-1,J}^{-1}$ as on its horizontal rotations $D_{n-1,J}^{-1}C_n$.
The proof of this result, which settles a 
stronger version of~\cite[Conjecture 10.2]{ERSchur},
involves $\cDes$-preserving operations on grid classes, as defined in~\cite{AABRV}.
Even though our proof is not bijective in general, we give explicit $\cDes$-preserving bijections between $D_{n-1,J}^{-1} C_n$ and $C_n D_{n-1,J}^{-1}$ when $|J|=1$ in Section~\ref{sec:singletons}, and when 
$J=[i]$ in Section~\ref{sec:arc}.

Combining the two main theorems mentioned above, it follows that the set $C_n D_{n-1,J}^{-1}$ of vertical rotations of an inverse descent class is cSp (Theorem~\ref{thm:vertical-main}). In particular, it is Schur-positive, as had been conjectured in~\cite{ERSchur}. 

Of special interest is the set $\A_n$ of arc permutations, which are those permutations in $\symm_n$ where every prefix forms an interval in $\ZZ_n$. As we will see in Section~\ref{sec:arc}, this set is a 
union of vertical rotations of inverse descent classes, so we deduce (Corollary~\ref{cor:main_arc}) that it is cSp as well, and we provide a bijective proof of this fact (Theorem~\ref{thm:arcSYT}).

\section{Background}\label{sec:background}

\subsection{Schur-positive permutation sets}
Recall that $A \subseteq \symm_n$ is Schur-positive if the quasi-symmetric function $\Q(A)$ from Equation~\eqref{eq:QA} is symmetric and Schur-positive. 
A direct combinatorial characterization of Schur-positive sets 
was described in Theorem~\ref{thm:Sp-combin}.

Some classical examples of Schur-positive sets of permutations are given in Table~\ref{tab:fine_list}. 
Other examples have appeared more recently in~\cite{ERarc,ERSchur,ER16}.

\begin{table}[h]
    \centering
    \begin{tabular}{p{2.1in}|l|p{2.7in}}
    {\bf Schur-positive subset of $\symm_n$} & {\bf Reference} & {\bf Related examples} \\ \hline 
     Knuth class    & \cite{Gessel} & subsets invariant under Knuth relations (e.g. inverse descent classes, $321$-avoiding permutations) \\ \hline 
     conjugacy class & \cite[Thm.\ 5.5]{Gessel-Reutenauer} & subsets invariant under conjugation
     (e.g. involutions)\\ \hline 
     permutations with a fixed inversion number & \cite[Prop. 9.5]{Adin-R} &
    \end{tabular}
    \caption{Classical examples of Schur-positive sets of permutations}
    \label{tab:fine_list}
\end{table}

\subsection{Cyclic descents for SYT}

Originally introduced by Rhoades~\cite{Rhoades} in the setting of rectangular shapes and later extended to some other shapes in~\cite{AER,Pechenik}, the notion of cyclic descent set of SYT was defined for arbitrary skew shapes in~\cite{ARR}. 
The following definition, introduced in~\cite{AER,ARR}, was motivated by the basic common properties of cyclic descent sets of permutations, defined in Equation~\eqref{eq:cellini}, and of SYT in the cases for which the definition was known at that time.
For a set $D\subseteq[n]$ and an integer $i$, we use the notation $i+D:=\{i+d \bmod n:d\in D\}\subseteq [n]$. Throughout the paper, addition of elements in $[n]$ will be interpreted modulo $n$.

\begin{defn} \label{def:cDes}
		Let $\TTT$ be a finite set. A {\em descent map} is any map
		$\Des: \TTT \longrightarrow 2^{[n-1]}$. 
		A {\em cyclic extension} of $\Des$ (also called a {\em cyclic descent extension}) is
		a pair $(\cDes,\psi)$, where 
		$\cDes: \TTT \longrightarrow 2^{[n]}$ is a map (called a {\em cyclic descent map})
		and $\psi: \TTT \longrightarrow \TTT$ is a bijection,
		satisfying the following axioms:  for all $T$ in  $\TTT$,
		\[
		\begin{array}{rl}
		\text{(extension)}   & \cDes(T) \cap [n-1] = \Des(T),\\
		\text{(equivariance)}& \cDes(\psi(T))  = 1+\cDes(T) ,\\
		\text{(non-Escher)}  & \emptyset \subsetneq \cDes(T) \subsetneq [n].\\
		\end{array}
		\]
\end{defn}

	\medskip

	\begin{theorem}[{\cite[Theorem 1.1]{ARR}}]\label{thm:ARR1}
		Let $\lambda/\mu$ be a skew shape.
		There exists a cyclic descent extension for $\SYT(\lambda/\mu)$ if and only if $\lambda/\mu$ is not a connected ribbon.
	\end{theorem}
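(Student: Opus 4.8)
The statement to prove is Theorem~\ref{thm:ARR1}, quoted from \cite{ARR}: $\SYT(\lambda/\mu)$ admits a cyclic descent extension if and only if $\lambda/\mu$ is not a connected ribbon.

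\medskip

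The plan is to prove the two directions separately, with the forward (``only if'') direction being the easy combinatorial observation and the backward (``if'') direction carrying essentially all the weight. For the forward direction, suppose $\lambda/\mu$ is a connected ribbon with $n$ boxes. Then the descent set map $\Des\colon \SYT(\lambda/\mu)\to 2^{[n-1]}$ is \emph{constant}: a connected ribbon has a unique standard filling (reading the boxes along the ribbon), and its descent set is exactly the set of positions where the ribbon ``turns downward,'' which is determined by the shape alone. But the equivariance axiom $\cDes(\psi(T)) = 1+\cDes(T)$ forces the multiset $\{\cDes(T): T\}$ to be invariant under the cyclic shift $D\mapsto 1+D$ on $2^{[n]}$; combined with the non-Escher axiom $\emptyset\subsetneq \cDes(T)\subsetneq[n]$, a single $\cDes$-value $D$ would need $1+D = D$, which is impossible for a proper nonempty subset of $[n]$ (the only shift-invariant subsets are $\emptyset$ and $[n]$). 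Hence no cyclic extension exists, and I would also note the degenerate case $|\lambda/\mu|=1$ (a single box, which is a connected ribbon) separately since $2^{[0]}$ makes the axioms vacuous/contradictory.

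\medskip

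For the backward direction — given $\lambda/\mu$ not a connected ribbon, construct $(\cDes,\psi)$ — the strategy I would follow is the representation-theoretic / non-constructive existence argument of \cite{ARR} rather than an explicit combinatorial rule (the explicit rule is Huang's later work \cite{Huang}). The key object is the quasisymmetric generating function $\sum_{T\in\SYT(\lambda/\mu)} \F_{n,\Des(T)}$, which equals the skew Schur function $s_{\lambda/\mu}$, together with the ``cyclic'' refinement. One shows that existence of a cyclic descent extension for a descent map $\Des\colon\TTT\to 2^{[n-1]}$ is equivalent to a positivity/integrality statement: namely that a certain virtual $\ZZ/n\ZZ$-representation (or equivalently the nonnegativity of coefficients when $\sum_T \bx^{\cDes(T)}$ is forced by the shift-and-extend conditions) is an honest nonnegative combination. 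Concretely, I would set up the ``cyclic sieving''–flavored linear algebra: the would-be cyclic descent distribution is uniquely determined, if it exists, by the ordinary descent distribution via a Möbius/inclusion–exclusion inversion over subsets of $[n]$, and one must check the resulting numbers are nonnegative integers and that a compatible bijection $\psi$ of order dividing something exists. The cleanest route is to exhibit $\psi$ first: use promotion. Jeu-de-taquin promotion $\partial$ acts on $\SYT(\lambda/\mu)$, and its relationship to descents is the classical fact $\Des(\partial T)$ is obtained from $\Des(T)$ by a cyclic-shift-like operation \emph{when no wraparound issue occurs}; the obstruction to promotion directly giving equivariance is exactly connected ribbons, where promotion has the wrong order behavior.

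\medskip

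So the core of the proof, and the step I expect to be the main obstacle, is handling the shapes where promotion alone does not visibly work and proving the required nonnegativity. I would proceed by reduction: first dispose of disconnected skew shapes $\lambda/\mu = \alpha \sqcup \beta$ by a product construction (a cyclic extension on a disconnected shape can be built from ordinary descent data on the pieces, since one component can absorb the ``cyclic'' slack — this is where disconnectedness, as opposed to being a non-ribbon, is used). Then for connected non-ribbon shapes, the shape contains a $2\times 2$ square, and I would run an induction peeling off outer boxes / using the Schur-positivity of products $s_{\lambda/\mu} = \sum c^{\lambda/\mu}_{\nu} s_\nu$ to express the desired cyclic-extended sum $\sum_T \bx^{\cDes(T)}$ as a nonnegative combination of the ``known'' cyclic generating functions $\sum_{U\in\SYT(\text{shape with cyclic extension})} \bx^{\cDes(U)}$; the base cases being rectangles (Rhoades \cite{Rhoades}) and near-rectangles (\cite{AER}). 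Verifying that all Littlewood–Richardson-type coefficients that appear are nonnegative and that the pieces glue into an actual bijection $\psi$ (not merely a matching of multisets) is the delicate bookkeeping; the bijection $\psi$ can be obtained a posteriori from the equidistribution by Hall's theorem / a bipartite matching argument once the multiset identity $\{\cDes(T)\}\cdot(\text{shift}) = \{\cDes(T)\}$ is established, so the real content is the multiset identity with nonnegative structure constants.
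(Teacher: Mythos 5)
First, note that the paper does not prove Theorem~\ref{thm:ARR1} at all: it is imported verbatim from \cite{ARR}, so your proposal can only be measured against the argument there and against correctness. Your ``only if'' direction contains a genuine error: you assert that a connected ribbon has a unique standard filling, so that $\Des$ is constant on $\SYT(\lambda/\mu)$. This is false for every connected ribbon other than a single row or a single column. For instance, the ribbon $(2,2)/(1)$ has two standard Young tableaux, with descent sets $\{1\}$ and $\{2\}$; in general the number of SYT of a connected ribbon equals the number of permutations with the corresponding descent set. Your shift-invariance argument therefore rules out only the shapes $(n)$ and $(1^n)$. The non-existence proof for an arbitrary connected ribbon has to combine the forced fiber-size formula of Lemma~\ref{lem:ARR1} with the non-Escher axiom to reach a contradiction (for $(2,2)/(1)$, one checks that no shift-invariant multiset of two proper nonempty subsets of $[3]$ restricts under $J\mapsto J\cap[2]$ to the multiset $\{\{1\},\{2\}\}$), and that computation for a general ribbon is exactly what is missing from your sketch.

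For the ``if'' direction you correctly identify the skeleton of the argument in \cite{ARR}: the fiber sizes $|\cDes^{-1}(J)|$ are uniquely forced by inclusion--exclusion (Lemma~\ref{lem:ARR1}), the entire content is the nonnegativity of these forced numbers, and the bijection $\psi$ can be recovered a posteriori from the resulting shift-invariance of the multiset $\{\cDes(T)\}$ via Hall's marriage theorem. However, the mechanisms you propose for the positivity step do not work as stated: jeu-de-taquin promotion is \emph{not} $\Des$-equivariant for general non-ribbon skew shapes (its good behavior is essentially special to rectangles, which is Rhoades's theorem \cite{Rhoades}), and the suggested induction ``peeling off outer boxes'' with rectangles and near-hooks as base cases is neither the argument of \cite{ARR} nor known to close. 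The actual proof identifies the forced generating function $\sum_T \mathbf{x}^{\cDes(T)}$ with an expansion coming from Postnikov's toric (cylindric) Schur functions, whose coefficients are Gromov--Witten invariants of Grassmannians and hence nonnegative; the only other known route is Huang's explicit combinatorial construction \cite{Huang}. Without one of these inputs, the positivity step---which is the heart of the theorem---remains open in your proposal.
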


An explicit combinatorial description of a cyclic descent extension on $\SYT(\lambda/\mu)$ for every skew shape which is not a connected ribbon
was recently given by Brice Huang~\cite{Huang}.

\begin{example}\label{ex:cdes-SYT} Write $\mu^1\oplus \mu^2 \oplus \cdots \oplus \mu^t$ to denote the skew shape consisting of $t$ connected components $\mu^1,\dots,\mu^t$, ordered from southwest to northeast.
A {\em strip} is a shape  $\mu^1\oplus \mu^2 \oplus \cdots \oplus \mu^t$, 
each of whose connected components has either only one row or only one column. For a SYT $T$ of a strip shape with at least 2 components, let 

$$
\cDes(T)=\begin{cases} \Des(T)\sqcup\{n\} & \text{if $1$ is lower than $n$, or $1$ and $n$ are in the same vertical component,}\\
\Des(T) & \text{otherwise.} \end{cases}
$$

Let $\psi(T):=1+T$, where $j+T$ is the SYT obtained by adding $j$ to each entry of $T$, modulo $n$, then rearranging the letters within each component in increasing order from left to right (if the component is a row) or from top to bottom (if it is a column). Note that $\psi^j(T)=j+T$ for all $j$. As shown in~\cite[Prop. 4.1]{AER}, $\cDes(1+T)=1+\cDes(T)$, so $(\cDes,\psi)$ is a cyclic descent extension.

        For example, letting
    \[
    T=\young(:34,1,2), \quad 1+T=\young(:14,2,3), \quad 2+T=\young(:12,3,4), \quad 3+T=\young(:23,1,4),
    \]
   the corresponding cyclic descent sets are $\{1,4\}$, $\{1,2\}$, $\{2,3\}$ and $\{3,4\}$, respectively.

In the special case where SYT is a horizontal strip shape with at least 2 components, we have
    \[
    \cDes(T):=\{i\in [n]:\  i+1 \b
     n \ \text{is in a lower row than $i$ in $T$}\}.
    \]

For example, letting
    \[
    T=\young(::34,12), \quad 1+T=\young(::14,23), \quad 2+T=\young(::12,34), \quad 3+T=\young(::23,14),
    \]
    the corresponding cyclic descent sets are $\{4\}$, $\{1\}$, $\{2\}$ and $\{3\}$, respectively.
\end{example}

\begin{example}\label{ex2:cdes-SYT}

By~\cite[Corollary 3.9]{AER}, for every $0\le k<n-1$, 
there  exists a unique cyclic descent map on $\SYT((n-1-k,1^k)\oplus (1))$, specified by letting 
\begin{equation}\label{eq:cDes_near_hook}
\cDes(T)=\begin{cases} \Des(T)\sqcup\{n\} & \text{if $|\Des(T)|=k$,}\\
\Des(T) & \text{otherwise,} \end{cases}
\end{equation} 
for every $T\in \SYT((n-1-k,1^k)\oplus (1))$. 
Denote by $\delta(T)$ the entry in the northeast disconnected cell of such tableau.

For $0<j<n$, let
$j+T$ be the SYT obtained by letting the set of entries in the first column (excluding the corner cell) be $j+1+\left(\cDes(T)\setminus \delta(T)\right)$ if $n-j\not\in \Des(T)$,
and $j+1+\left(\cDes(T)\setminus\{n-j\}\right)$ if  $n-j\in \Des(T)$, and letting $\delta(j+T):=j+\delta(T)$.  By~\cite[Lemma 4.7]{AER}, $\cDes(j+T)=j+\cDes(T)$, so a cyclic descent extension is obtained by taking $\psi(T):=1+T$.
\end{example}

The following lemma is used in Section~\ref{sec:cSp}.

\begin{lemma}[{\cite[Lemma 2.2]{ARR}}]\label{lem:ARR1}
Let $\TTT$ be a finite set which carries a descent map $\Des: \TTT \longrightarrow 2^{[n-1]}$. Assume that there exists a cyclic descent extension $(\cDes,\psi)$.
Then, for every subset $\emptyset \ne J = \{j_1 < \cdots < j_t\} \subseteq [n]$, the fiber size $|\cDes^{-1}(J)|$ is uniquely determined by the formula
\begin{equation}
\label{cDes-fiber-sizes-formula}
|\cDes^{-1}(J)|
= \sum_{i=1}^{t} (-1)^{i-1} |\Des^{-1}(\{j_{i+1} - j_i, \ldots, j_t - j_i\})|,
\end{equation}
where we interpret $\{j_{i+1} - j_i, \ldots, j_t - j_i\}$ as $\emptyset$ when $i=t$.
\end{lemma}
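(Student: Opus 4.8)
The plan is to derive the formula directly from the three axioms of a cyclic descent extension by induction on $|J|$; since the resulting expression involves only the ordinary descent fibers $|\Des^{-1}(\cdot)|$, this simultaneously establishes the formula and the asserted uniqueness. Write $c_S:=|\cDes^{-1}(S)|$ for $S\subseteq[n]$ and $d_D:=|\Des^{-1}(D)|$ for $D\subseteq[n-1]$. Two elementary identities do all the work. First, the (extension) axiom $\cDes(T)\cap[n-1]=\Des(T)$ shows that, for any $D\subseteq[n-1]$, a $T\in\TTT$ has $\Des(T)=D$ exactly when $\cDes(T)$ is $D$ or $D\cup\{n\}$, and these two alternatives are disjoint; hence
\[
d_D \;=\; c_D + c_{D\cup\{n\}} \qquad (D\subseteq[n-1]).
\]
Second, iterating the (equivariance) axiom gives $\cDes(\psi^k(T))=k+\cDes(T)$, and since each $\psi^k$ is a bijection of $\TTT$ it restricts to a bijection $\cDes^{-1}(S)\to\cDes^{-1}(k+S)$; thus $c_S=c_{k+S}$ for every $k$ and every $S$. (The (non-Escher) axiom enters only through $c_\emptyset=0$.)

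With these in hand, I would prove by induction on $m=|S|$ that for every nonempty $S=\{s_1<\cdots<s_m\}\subseteq[n]$,
\[
c_S \;=\; \sum_{i=1}^{m}(-1)^{i-1}\, d_{\{s_{i+1}-s_i,\,\ldots,\,s_m-s_i\}},
\]
with the indexing set read as $\emptyset$ when $i=m$; taking $S=J$ is the claim. For the inductive step, apply the shift identity with $k=n-s_1$: the smallest element $s_1$ is sent to $n$, while $s_i-s_1\in[n-1]$ for $i\ge2$, so $c_S=c_{S'\cup\{n\}}$, where $S':=\{s_2-s_1<\cdots<s_m-s_1\}\subseteq[n-1]$ has $m-1$ elements. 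The first identity now gives $c_{S'\cup\{n\}}=d_{S'}-c_{S'}$; the inductive hypothesis expands $c_{S'}$, and because $s'_\ell-s'_k=s_{\ell+1}-s_{k+1}$ a reindexing $i\mapsto i+1$ turns that expansion (with its alternating signs) into precisely the $i\ge2$ terms of the target sum, while $d_{S'}$ supplies its $i=1$ term. The base case $m=1$ is the same computation with $S'=\emptyset$, using $c_\emptyset=0$ and $c_{\{n\}}=d_\emptyset$.

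I do not anticipate a genuine obstacle here: the argument is elementary, and the only points requiring care are choosing the shift so that the \emph{smallest} element of $S$ (rather than the largest) lands on $n$, and tracking the signs through the reindexing. As a consistency check, one may note that applying the formula to $S=[n]$, where $c_{[n]}=0$ by (non-Escher), recovers the identity $\sum_{i=1}^{n}(-1)^{i-1}d_{[n-i]}=0$.
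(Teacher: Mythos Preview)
The paper does not prove this lemma; it is quoted verbatim from \cite[Lemma~2.2]{ARR} and simply cited, so there is no proof in the present paper to compare against.

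That said, your argument is correct and complete. The two identities you isolate, namely $d_D=c_D+c_{D\cup\{n\}}$ from the extension axiom and $c_S=c_{k+S}$ from equivariance, are exactly what is needed, and your choice to shift by $n-s_1$ so that the \emph{smallest} element lands on $n$ is the right one: it guarantees $S'\subseteq[n-1]$ so that the first identity applies. The reindexing $k\mapsto i=k+1$ via $s'_\ell-s'_k=s_{\ell+1}-s_{k+1}$ is checked correctly, and the base case is handled. One small cosmetic point: in the base case you invoke non-Escher for $c_\emptyset=0$, which is fine, though strictly speaking the statement only concerns nonempty $J$, so you could equally well treat $m=1$ as the start of the induction without ever touching $c_\emptyset$ (just observe $c_{\{s_1\}}=c_{\{n\}}=d_\emptyset-c_\emptyset$ and cite non-Escher once). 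Your closing consistency check for $S=[n]$ is a nice sanity test but not part of the proof.
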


\section{Cyclic Schur-positive permutation sets}\label{sec:cSp}

\subsection{Basic examples}
Definition~\ref{def:cSp} introduces the concept of cyclic Schur-positivity (cSp), which is the central notion of this paper. 
Let us make some remarks about this definition. First, note the analogy with Theorem~\ref{thm:Sp-combin}, which characterizes Schur-positivity of sets of permutations. 
One difference, however, is the use of skew shapes in Definition~\ref{def:cSp}. This modification is needed because SYT of hook shapes do not carry a cyclic descent extension. 

Second, note that Lemma~\ref{lem:ARR1} 
implies that, for every skew shape $\lambda/\mu$ which is not a connected ribbon, 
the polynomial $$\sum\limits_{T\in \SYT(\lambda/\mu)}{\bf x}^{\cDes(T)}$$ is well defined, in the sense that it does not depend on the choice of $\cDes$. 

\begin{example}\label{cSp-examples}
\begin{enumerate}[1.]

\item
Recall the cyclic subgroup $C_n$ generated by the $n$-cycle $c_n=(1,2,\dots,n)$.
By the definitions in Example~\ref{ex:cdes-SYT}, we have
\[
\sum_{\pi\in C_n} {\bf x}^{\cDes(\pi)}
=\sum\limits_{i=1}^n x_i
=\sum\limits_{T\in \SYT((1)\oplus(n-1))}{\bf x}^{\cDes(T)},
\]
thus $C_n$ is cSp.

\item By~\cite[Theorem 1.2]{ARR},
		\[
		\sum_{\pi \in \symm_n} {\bf x}^{\cDes(\pi)}
		= \sum_{\substack{\text{non-hook}\\ \lambda \vdash n}}
		|\SYT(\lambda)| \sum_{T \in \SYT(\lambda)} {\bf x}^{\cDes(T)}
		\quad + \quad \sum_{k=1}^{n-1} \binom{n-2}{k-1} 
		\sum_{T \in \SYT((n-k+1,1^k)/(1))} {\bf x}^{\cDes(T)},
		\]
		thus $\symm_n$ is cSp. 
\end{enumerate}
\end{example}

\subsection{An alternative characterization}
The following equivariance property will be used to give an alternative characterization of cSp sets.

\begin{defn}\label{def:cyc-invariant}  
A subset $A\subseteq \symm_n$ is called {\em $\cDes$-invariant} if there exists a bijection $\psi:A\to A$ such that, for all $\pi\in A$,
\begin{equation}\label{eq:cyc-invariant}
\cDes(\psi \pi)=1+\cDes(\pi).
\end{equation}
\end{defn}

\begin{remark}\label{rem:rotation} 
A subset $A\subseteq \symm_n$ is {\em invariant under horizontal rotation} if
$A=Ac_n$, where $c_n= (1,2,\dots,n)$. 
Subsets invariant under horizontal rotation are $\cDes$-invariant, since one can simply take $\psi\pi=\pi c_n^{-1}$ in Definition~\ref{def:cyc-invariant}.
\end{remark}

The following characterization of cSp sets will be used in the rest of the paper. 

\begin{theorem}\label{thm:cSp}	 
A subset $A\subseteq \symm_n$ is cyclic Schur-positive if  and only if it is  Schur-positive and $\cDes$-invariant.
\end{theorem}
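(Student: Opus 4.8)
The plan is to prove both directions by translating between the polynomial identity~\eqref{eq:cSp} and the combinatorial data of a $\cDes$-preserving bijection, using Lemma~\ref{lem:ARR1} as the bridge that makes the cyclic fiber sizes depend only on the ordinary descent fiber sizes. Throughout, for a finite set $\TTT$ with descent map and cyclic extension $(\cDes,\psi)$, write $P(\TTT):=\sum_{T\in\TTT}\bx^{\cDes(T)}$; by Lemma~\ref{lem:ARR1} this is independent of the choice of $\cDes$, since the fiber sizes $|\cDes^{-1}(J)|$ are forced by formula~\eqref{cDes-fiber-sizes-formula} in terms of the numbers $|\Des^{-1}(\cdot)|$.

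\textbf{($\Leftarrow$) Schur-positive and $\cDes$-invariant implies cSp.} Suppose $A\subseteq\symm_n$ is Schur-positive with bijection $\psi:A\to A$ satisfying $\cDes(\psi\pi)=1+\cDes(\pi)$. By Theorem~\ref{thm:Sp-combin}, Schur-positivity gives nonnegative integers $(m_\lambda)_{\lambda\vdash n}$ with $\sum_{\pi\in A}\bx^{\Des(\pi)}=\sum_{\lambda\vdash n}m_\lambda\sum_{T\in\SYT(\lambda)}\bx^{\Des(T)}$. First I would observe that hook shapes contribute zero: if $\lambda$ is a hook $(n-k,1^k)$ for some $0\le k\le n-1$, I claim $m_\lambda=0$, because both $A$ (via $\psi$) and $\SYT(\lambda)$ for non-hook $\lambda$ (via a cyclic extension, which exists by Theorem~\ref{thm:ARR1}) have the property that $|\Des^{-1}(J)|$ depends only on the cyclic-shift class of $J$; comparing the fiber over a descent set $J$ with $|J|=k$ that forces the hook versus one that does not (e.g. comparing $\Des^{-1}(\{1,2,\dots,k\})$, which the hook $(n-k,1^k)$ contributes to, against a cyclically shifted set of the same size) forces $m_{(n-k,1^k)}=0$ for all $k$. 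More carefully: the $\cDes$-invariance of $A$ implies that the function $J\mapsto|\{\pi\in A:\Des(\pi)=J\}|$ is constant on orbits of the cyclic shift acting on subsets of $[n-1]$ obtained by the recipe $J\mapsto$ (the descent set of $\psi\pi$ restricted to $[n-1]$); the unique SYT statistic with this invariance forces the hook coefficients to vanish, since $\bigoplus_\lambda m_\lambda\SYT(\lambda)$ must then also have cyclic-shift-invariant descent fibers, and among all $\lambda\vdash n$ the hooks are precisely the shapes without such invariance. Once $m_{(n-k,1^k)}=0$ for all $k$, I would use the cyclic extensions for the non-hook straight shapes and apply Lemma~\ref{lem:ARR1} to both sides: the identity $\sum_{\pi\in A}\bx^{\Des(\pi)}=\sum_{\text{non-hook }\lambda}m_\lambda\sum_{T\in\SYT(\lambda)}\bx^{\Des(T)}$ of ordinary-descent generating functions, together with equal behavior under~\eqref{cDes-fiber-sizes-formula}, upgrades to $\sum_{\pi\in A}\bx^{\cDes(\pi)}=\sum_{\text{non-hook }\lambda}m_\lambda\sum_{T\in\SYT(\lambda)}\bx^{\cDes(T)}$, which is exactly~\eqref{eq:cSp} with $m_{\lambda/\mu}=0$ unless $\mu=\emptyset$.

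\textbf{($\Rightarrow$) cSp implies Schur-positive and $\cDes$-invariant.} Suppose~\eqref{eq:cSp} holds. Restricting both sides to monomials $\bx^J$ with $J\subseteq[n-1]$ (i.e. intersecting all cyclic descent sets with $[n-1]$, which by the (extension) axiom recovers the ordinary descent set) yields $\sum_{\pi\in A}\bx^{\Des(\pi)}=\sum_{\lambda/\mu}m_{\lambda/\mu}\sum_{T\in\SYT(\lambda/\mu)}\bx^{\Des(T)}$. Now I would invoke the fact (standard for skew shapes, via the Jacobi–Trudi / fundamental-quasisymmetric expansion) that $\sum_{T\in\SYT(\lambda/\mu)}\bx^{\Des(T)}$ is, after the usual specialization, the image of a Schur-positive symmetric function, so the right-hand side certifies that $\Q(A)$ is symmetric and Schur-positive; hence $A$ is Schur-positive. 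For $\cDes$-invariance, the key point is that~\eqref{eq:cSp} says the multiset $\{\cDes(\pi):\pi\in A\}$ equals a disjoint union of multisets $\{\cDes(T):T\in\SYT(\lambda/\mu)\}$, and for each non-ribbon skew shape the equivariance axiom supplies a bijection $\psi_{\lambda/\mu}$ on $\SYT(\lambda/\mu)$ with $\cDes\circ\psi_{\lambda/\mu}=1+\cDes$; assembling these (with multiplicities $m_{\lambda/\mu}$) gives a bijection on the multiset of cyclic descent sets commuting with the shift $J\mapsto1+J$, and transporting it back through any fixed bijection $A\to\bigsqcup\SYT(\lambda/\mu)$ that matches cyclic descent sets (which exists because the two multisets of cyclic descent sets coincide) produces the required $\psi:A\to A$ with $\cDes(\psi\pi)=1+\cDes(\pi)$.

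\textbf{Main obstacle.} The delicate step is the vanishing of the hook coefficients in the ($\Leftarrow$) direction — equivalently, showing that a Schur-positive \emph{and} $\cDes$-invariant set cannot "see" hook shapes. The cleanest route is probably to argue at the level of the fiber-size formula~\eqref{cDes-fiber-sizes-formula}: the $\cDes$-invariance of $A$ forces $\sum_{i}(-1)^{i-1}|\Des^{-1}(\text{shifted }J)|$ to be a genuine nonnegative quantity (a fiber size) for \emph{every} nonempty $J\subseteq[n]$, including those $J$ that would be "Escher" ($J=\emptyset$ or $J=[n]$) for the hook SYT statistic; since the hook shapes are exactly the ones for which this alternating sum fails to define a consistent cyclic extension (Theorem~\ref{thm:ARR1}), their multiplicity must be zero. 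I would need to make precise that the constraint imposed by $\psi$ on $A$ is exactly the one encoded by~\eqref{cDes-fiber-sizes-formula}, and that non-hook straight shapes already span all the admissible descent-distribution profiles; this is where most of the real work lies, and I expect it to require a short linear-algebra argument over the space of functions $2^{[n-1]}\to\QQ$ that are invariant under the relevant cyclic action.
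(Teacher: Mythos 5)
The backward direction of your argument is essentially the paper's: setting $x_n=1$ (equivalently, intersecting cyclic descent sets with $[n-1]$) recovers the ordinary-descent identity, passing to quasisymmetric functions and applying Gessel's identity together with the Littlewood--Richardson rule gives Schur-positivity, and the equivariance axiom on each $\SYT(\lambda/\mu)$ transports a shift-equivariant bijection back to $A$. That part is fine.

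The forward direction, however, contains a fatal error: your claim that Schur-positivity plus $\cDes$-invariance forces $m_{(n-k,1^k)}=0$ for all hooks is false. Take $A=C_n$: it is Schur-positive with $\sum_{\pi\in C_n}\bx^{\Des(\pi)}=\sum_{T\in\SYT(n)\cup\SYT(n-1,1)}\bx^{\Des(T)}$, so $m_{(n)}=m_{(n-1,1)}=1$ while every non-hook coefficient vanishes; yet $C_n$ is $\cDes$-invariant and cSp (Example~\ref{cSp-examples}.1). Your proposed conclusion --- Equation~\eqref{eq:cSp} supported only on non-hook straight shapes --- would then read $\sum_{i=1}^n x_i=0$. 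The point you are missing is precisely the reason Definition~\ref{def:cSp} allows skew shapes: hook contributions need not vanish, they must be \emph{regrouped} into disconnected skew shapes that do carry cyclic descent extensions. The paper does this via the identity $\sum_{T\in\SYT(1^k\oplus(n-k))}\bx^{\Des(T)}=\sum_{T\in\SYT(n-k,1^k)}\bx^{\Des(T)}+\sum_{T\in\SYT(n-k+1,1^{k-1})}\bx^{\Des(T)}$, which after inversion expresses the total hook contribution as $\sum_i d_i\sum_{T\in\SYT(1^i\oplus(n-i))}\bx^{\Des(T)}$ with $d_i=\sum_{k\ge i}(-1)^{k-i}m_{(n-k,1^k)}$. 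The real work --- and the only place where $\cDes$-invariance of $A$ enters this direction --- is Lemma~\ref{lem:2}, which shows $d_i\ge 0$ and $d_0=0$ by a telescoping argument using $|A_{[i]\cup\{n\}}|=|A_{[i+1]}|$ and the non-Escher property. After that regrouping no ribbon shapes remain, and your intended ``upgrade via the fiber-size formula'' step (the paper's Lemma~\ref{lem:3}, built on Lemma~\ref{lem:ARR1}) does go through. So the skeleton of your final step is right, but the hook-vanishing claim must be replaced by the regrouping-into-strips argument and the nonnegativity of the alternating sums.
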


Before proving this theorem, let us consider some examples of its usage. 

\begin{example}\label{ex:cSp-alt}
\begin{enumerate}[1.]
\item The set $C_n=\langle c_n \rangle$ satisfies 
\[
\sum\limits_{\pi \in C_n} {\bf x}^{\Des(\pi)}=1+\sum\limits_{i=1}^{n-1}x_i=\sum\limits_{T\in \SYT((1)\oplus(n-1))}{\bf x}^{\Des(\pi)}=\sum\limits_{T\in \SYT(n-1,1)\cup\SYT(n)}{\bf x}^{\Des(\pi)}.
\]
Thus, by Theorem~\ref{thm:Sp-combin}, it is Schur-positive. On the other hand, by Remark~\ref{rem:rotation},  $C_n$ is $\cDes$-invariant.
Hence, by Theorem~\ref{thm:cSp}, $C_n$ is cSp, in agreement with Example~\ref{cSp-examples}.1.
   
\item The Knuth class $K=\{2143,2413\}\subset \symm_4$, corresponding to the tableau 
$\begin{ytableau}
\scriptstyle 1& \scriptstyle3\\
\scriptstyle 2&\scriptstyle 4
\end{ytableau}$, is Schur-positive, since
\[
\sum\limits_{\pi\in K} {\bf x}^{\Des(\pi)}=x_1 x_3+x_2=\sum\limits_{Q\in \SYT(2,2)} {\bf x}^{\Des(Q)}.
\]
However, $K$ is not $\cDes$-invariant, because $\cDes(2143)=\{1,3,4\}$ and $\cDes(2413)=\{2,4\}$. Thus, it is not cSp.
\end{enumerate}
\end{example}

\begin{remark}\label{rem:most}
 By Theorem~\ref{thm:cSp}, every cSp set of permutations is Schur-positive. The converse does not hold, as Example~\ref{ex:cSp-alt}.2 shows. 
\end{remark}

We now turn our attention to the proof of Theorem~\ref{thm:cSp}. In the next two lemmas, we assume that $A\subseteq\symm_n$ is Schur-positive and $\cDes$-invariant, and we let $(m_{\lambda})_{\lambda\vdash n}$ be 
nonnegative integers such that
\begin{equation}\label{eq:111}
    \sum\limits_{\pi\in A} {\bf x}^{\Des(\pi)}=\sum\limits_{\lambda\vdash n} m_\lambda\sum\limits_{T\in \SYT(\lambda)}{\bf x}^{\Des(T)},
\end{equation}
which exist by Theorem~\ref{thm:Sp-combin}.

\begin{lemma}\label{lem:1}
For every $0\le k <n$,
\[
m_{(n-k,1^k)}=|\{a\in A:\ \Des(a)=[k]\}|,
\]
where we define $[0]:=\emptyset$.
\end{lemma}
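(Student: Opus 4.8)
The plan is to extract the coefficient $m_{(n-k,1^k)}$ from identity~\eqref{eq:111} by isolating, on both sides, the monomials ${\bf x}^J$ indexed by descent sets that can \emph{only} come from hook-shaped tableaux. The key observation is that among all partitions $\lambda\vdash n$, the hook $(n-k,1^k)$ is the unique shape admitting a SYT with descent set exactly $[k]=\{1,2,\dots,k\}$: in such a tableau the entries $1,2,\dots,k+1$ must each lie in a strictly lower row than the previous, forcing a column of length at least $k+1$, while having no descent among $k+1,k+2,\dots,n-1$ forces those entries into a single row; together these pin down the shape to $(n-k,1^k)$ and moreover show that this SYT is unique, so $\sum_{T\in\SYT(n-k,1^k)}{\bf x}^{\Des(T)}$ contributes the monomial ${\bf x}^{[k]}$ with coefficient exactly $1$.

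First I would verify that claim carefully, and also check that no \emph{other} shape $\lambda\vdash n$ has a SYT $T$ with $\Des(T)=[k]$ (the same row/column argument does this). Consequently, comparing the coefficient of ${\bf x}^{[k]}$ on the two sides of~\eqref{eq:111} yields
\[
|\{a\in A:\ \Des(a)=[k]\}| = m_{(n-k,1^k)},
\]
since on the left side ${\bf x}^{[k]}$ occurs once for each $a\in A$ with $\Des(a)=[k]$ (note ${\bf x}^J={\bf x}^{J'}$ iff $J=J'$ as squarefree monomials), and on the right side only the $\lambda=(n-k,1^k)$ term contributes, with internal coefficient $1$.

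The only genuine subtlety — the step I expect to be the main obstacle — is the uniqueness assertion about hook tableaux: one must argue both that $(n-k,1^k)$ is the \emph{only} partition of $n$ whose SYT set realizes the descent set $[k]$, and that it does so with multiplicity one. Everything else is bookkeeping with squarefree monomials. I would handle this by the direct combinatorial argument sketched above: a descent at $i$ means $i+1$ sits in a lower row than $i$, so $\Des(T)\supseteq[k]$ forces $1,\dots,k+1$ to occupy $k+1$ distinct rows in strictly increasing row-order, hence a first column of length $\ge k+1$; and $\Des(T)\cap\{k+1,\dots,n-1\}=\emptyset$ forces $k+1,k+2,\dots,n$ to be weakly increasing in row, which (given they already start a new row at position $k+1$) places $k+1,\dots,n$ all in row $k+1$. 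This is only consistent with $\lambda=(n-k,1^k)$, and then $T$ is determined entirely: column one reads $1,2,\dots,k,k+1$ and row $k+1$ reads $k+1,k+2,\dots,n$. Hence the coefficient of ${\bf x}^{[k]}$ on the right of~\eqref{eq:111} is exactly $m_{(n-k,1^k)}$, completing the proof. (Note this lemma uses only Schur-positivity of $A$, not $\cDes$-invariance; the latter hypothesis, stated in the surrounding text, will presumably be used in the companion lemma that follows.)
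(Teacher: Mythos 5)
Your overall strategy is exactly the paper's: compare the coefficient of ${\bf x}^{[k]}$ on the two sides of~\eqref{eq:111}, using the fact that among all $\lambda\vdash n$ there is a unique SYT with descent set $[k]$, and that it has hook shape $(n-k,1^k)$. The surrounding bookkeeping is fine, and your remark that only Schur-positivity (not $\cDes$-invariance) is needed here is correct.

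However, your justification of the key uniqueness step --- the step you yourself flag as the main obstacle --- is wrong as written. With the paper's convention, a \emph{non}-descent at $i$ means $i+1$ lies in a weakly \emph{higher} row than $i$ (smaller or equal row index), so the row indices of $k+1,k+2,\dots,n$ are weakly \emph{decreasing}, not ``weakly increasing'' as you assert. Your conclusion that $k+1,\dots,n$ all sit in row $k+1$, and that ``row $k+1$ reads $k+1,\dots,n$,'' is therefore false: if row $k+1$ had $n-k$ cells then each of the $k$ rows above it would need at least $n-k$ cells, giving at least $(k+1)(n-k)>n$ cells whenever $0<k<n-1$; it also contradicts your own identification of the shape as $(n-k,1^k)$, whose row $k+1$ is a single cell. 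The correct finish is: since $1,\dots,k+1$ occupy strictly increasing rows and their cells form a Young subdiagram with only $k+1$ cells, they must fill the first column $(1,1),\dots,(k+1,1)$; then $k+2$ must lie weakly above row $k+1$ while extending that single-column shape, forcing it into cell $(1,2)$, and inductively $k+2,\dots,n$ fill out the first row. Hence the unique tableau has first column $1,2,\dots,k+1$ and first row $1,k+2,\dots,n$, of shape $(n-k,1^k)$, which is what the paper records. With that correction your proof coincides with the paper's.
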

\begin{proof}
For every $0\le k <n$, there is a unique SYT $T$ with $\Des(T) = [k]$, namely the tableau of shape $(n-k,1^k)$ having $1,\ldots, k+1$ in the first column and $1,k+2,\ldots, n$ in the first row. Comparing the coefficients of ${\bf x}^{[k]}$ on both sides of Equation~\eqref{eq:111} completes the proof.
\end{proof}

\begin{lemma}\label{lem:2}
For every $0\le k <n$, the alternating sum
\[
\sum\limits_{i=k}^{n-1} (-1)^{k-i} m_{(n-i,1^i)}
\]
is nonnegative; when $k=0$, it is zero.
\end{lemma}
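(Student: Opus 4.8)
The plan is to interpret the alternating sum $\sum_{i=k}^{n-1}(-1)^{k-i}m_{(n-i,1^i)}$ as a fiber size of the cyclic descent map on $A$, using the $\cDes$-invariance hypothesis together with Lemma~\ref{lem:ARR1}. First I would combine Lemma~\ref{lem:1} with the observation that $\cDes$-invariance makes $(\cDes,\psi)$ a cyclic descent extension of the descent map $\Des\colon A\to 2^{[n-1]}$, so Lemma~\ref{lem:ARR1} applies with $\TTT=A$. The key is to feed a carefully chosen subset $J\subseteq[n]$ into Formula~\eqref{cDes-fiber-sizes-formula}.

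Concretely, for $0\le k<n$ consider the set $J=\{1,2,\dots,k\}\cup\{n\}$, of size $t=k+1$, with elements $j_1=1<j_2=2<\dots<j_k=k<j_{k+1}=n$. Plugging this into Lemma~\ref{lem:ARR1}, the term indexed by $i$ (for $1\le i\le k$) involves $\Des^{-1}$ of the set $\{j_{i+1}-j_i,\dots,j_{k+1}-j_i\}=\{1,1,\dots,1\}$ — wait, one must be careful: for $i<k$ the shifts $j_{i+1}-j_i,\dots,j_k-j_i$ are $1,2,\dots,k-i$ and then $j_{k+1}-j_i=n-i$, so the set is $\{1,2,\dots,k-i,\,n-i\}$; for $i=k$ it is $\{j_{k+1}-j_k\}=\{n-k\}$; for $i=k+1$ it is $\emptyset$. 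Since $\Des(a)\subseteq[n-1]$ always, the element $n-i$ only contributes when $n-i\le n-1$, i.e. $i\ge1$, which is automatic. So the right-hand side of~\eqref{cDes-fiber-sizes-formula} becomes $\sum_{i=1}^{k}(-1)^{i-1}|\{a\in A:\Des(a)=\{1,\dots,k-i,n-i\}\}| + (-1)^{k}|\{a\in A:\Des(a)=\emptyset\}|$. This does not obviously match the desired sum of $m_{(n-i,1^i)}$'s, so I expect the right choice of $J$ is instead $J=\{n-k,n-k+1,\dots,n\}$ (a "suffix" of $[n]$), whose successive differences $j_{r+1}-j_r$ are all $1$; then $\{j_{i+1}-j_i,\dots,j_t-j_i\}=\{1,2,\dots,t-i\}=[t-i]$, so~\eqref{cDes-fiber-sizes-formula} reads $|\cDes^{-1}(J)|=\sum_{i=1}^{t}(-1)^{i-1}|\Des^{-1}([t-i])|=\sum_{r=0}^{t-1}(-1)^{t-1-r}|\Des^{-1}([r])|$ after reindexing $r=t-i$. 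With $t=k+1$ and using Lemma~\ref{lem:1} to rewrite $|\Des^{-1}([r])|=m_{(n-r,1^r)}$, this is exactly $\sum_{r=0}^{k}(-1)^{k-r}m_{(n-r,1^r)}$. Hmm — that is the sum from $r=0$ to $k$, not from $k$ to $n-1$; I would resolve this by instead taking $J=\{1,2,\dots,k\}$ itself (not padded), reindexing to land on the stated range, and noting that $|\cDes^{-1}(J)|\ge0$ always, while the non-Escher axiom forces $|\cDes^{-1}([0])|=|\cDes^{-1}(\emptyset)|=0$ and $|\cDes^{-1}([n])|=0$ — the latter giving precisely the $k=0$ vanishing claim once one checks $\sum_{i=0}^{n-1}(-1)^{-i}m_{(n-i,1^i)}$ equals $|\cDes^{-1}([n])|$ or $|\cDes^{-1}(\emptyset)|$ up to sign.

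So the steps are: (1) verify $(\cDes,\psi)$ restricted to $A$ is a genuine cyclic descent extension of $\Des|_A$, so Lemma~\ref{lem:ARR1} is applicable to $\TTT=A$; (2) choose the subset $J_k\subseteq[n]$ (of consecutive integers, the precise one pinned down by matching indices) so that Formula~\eqref{cDes-fiber-sizes-formula} collapses — via the all-ones difference pattern — to an alternating sum of the quantities $|\Des^{-1}([r])|$; (3) invoke Lemma~\ref{lem:1} to replace each $|\Des^{-1}([r])|$ by $m_{(n-r,1^r)}$, obtaining that the stated alternating sum equals $|\cDes^{-1}(J_k)|\ge0$; (4) for the $k=0$ case, observe that the relevant $J$ is either $\emptyset$ or $[n]$, both of which have empty $\cDes$-fiber by the non-Escher axiom, forcing the sum to be $0$. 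The main obstacle is purely bookkeeping: getting the index range, the choice of $J$, and the alternating signs to line up exactly with the statement, since there are several near-miss choices of $J$ that each produce an alternating sum over a slightly shifted range; once the correct $J$ is identified the argument is a one-line application of Lemma~\ref{lem:ARR1}. A secondary subtlety is confirming that every partition appearing with nonzero $m_\lambda$ on the RHS of~\eqref{eq:111} for which ${\bf x}^{[k]}$-type monomials matter is indeed a hook $(n-k,1^k)$ — but this is exactly the content of the uniqueness statement in the proof of Lemma~\ref{lem:1}, so no new work is needed there.
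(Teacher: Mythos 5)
Your overall strategy---regard $(\cDes,\psi)$ as a cyclic descent extension of $\Des$ on $\TTT=A$ and extract the alternating sum from Lemma~\ref{lem:ARR1}---is sound and does lead to a proof, but as written the decisive step is left unresolved, and it is not the ``purely bookkeeping'' matter of finding the right $J$ that you suggest. For any $J=\{j_1<\cdots<j_t\}$ the $i=t$ term of Formula~\eqref{cDes-fiber-sizes-formula} is $\pm|\Des^{-1}(\emptyset)|$, so for $k\ge 1$ no single choice of $J$ can make the right-hand side equal the \emph{tail} sum $\sum_{i=k}^{n-1}(-1)^{k-i}m_{(n-i,1^i)}$, whose smallest index is $k$. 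What $J=[k]$ actually gives, after invoking Lemma~\ref{lem:1}, is the \emph{head} sum $|\cDes^{-1}([k])|=\sum_{r=0}^{k-1}(-1)^{k-1-r}m_{(n-r,1^r)}$. To finish you must combine this with a second fact you do mention, namely $|\cDes^{-1}([n])|=0$ (non-Escher): the formula with $J=[n]$ then says the \emph{full} alternating sum $\sum_{r=0}^{n-1}(-1)^{r}m_{(n-r,1^r)}$ vanishes (this is exactly the $k=0$ claim), and subtracting the head from this total converts the head sum into the tail sum, giving $\sum_{i=k}^{n-1}(-1)^{k-i}m_{(n-i,1^i)}=|\cDes^{-1}([k])|\ge 0$. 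With that one extra line your argument is complete; without it, ``reindexing'' alone cannot turn a sum over $\{0,\dots,k-1\}$ into a sum over $\{k,\dots,n-1\}$.

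For comparison, the paper proves the lemma directly without invoking Lemma~\ref{lem:ARR1}: writing $A_S=\{a\in A:\cDes(a)=S\}$, the extension property gives $m_{(n-i,1^i)}=|A_{[i]}|+|A_{[i]\cup\{n\}}|$, $\cDes$-invariance gives $|A_{[i]\cup\{n\}}|=|A_{[i+1]}|$, and the sum telescopes to $|A_{[k]}|$ using $A_{[n]}=\emptyset$. Both routes identify the alternating sum as the size of the $\cDes$-fiber of $A$ over $[k]$; the paper's is shorter and self-contained, while yours outsources the cancellation to Lemma~\ref{lem:ARR1} at the cost of the head-versus-tail conversion above.
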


\begin{proof}
To simplify notation, let us first write, for any $S\subseteq [n]$,
$$A_S:=\makeset{a\in A}{\cDes(a)=S}.$$
By Lemma~\ref{lem:1}, for $0\le k <n$,
$$m_{(n-i,1^i)}=|\{a\in A:\ \Des(a)=[i]\}|=|A_{[i]}| + |A_{[i]\cup \{n\}}|.$$
Since $A$ is $\cDes$-invariant, we have
$|A_{[i]\cup \{n\}}| = |A_{[i+1]}|$.  Combining these facts we see that
\[\sum_{i=k}^{n-1} (-1)^{k-i} m_{(n-i,1^i)}=\sum_{i=k}^{n-1} (-1)^{k-i} (|A_{[i]}|+|A_{[i]\cup \{n\}}| )=\sum_{i=k}^{n-1} (-1)^{k-i} (|A_{[i]}| + |A_{[i+1]}|).
\]
The telescoping sum on the right-hand side further reduces to
\[
|A_{[k]}|+(-1)^{k-n+1}|A_{[n]}|=|A_{[k]}|,
\]
using that $A_{[n]} = \emptyset$ by the non-Escher property.
This expression is clearly nonnegative, and it equals zero when $k=0$, since $A_{[0]} = \emptyset$ again by the non-Escher property.
\end{proof}

\begin{lemma}\label{lem:3}
    Let $A\subseteq \fS_n$ be $\cDes$-invariant.  Assume that there are nonnegative constants $(m_{\lambda/\mu})_{\lambda/\mu\, \vdash n}$ such that
    \begin{equation}\label{eq:lem3}
    \sum\limits_{\pi\in A} {\bf x}^{\Des(\pi)}=\sum\limits_{\lambda/\mu\, \vdash n} m_{\lambda/\mu}\sum\limits_{T\in \SYT(\lambda/\mu)}{\bf x}^{\Des(T)}.         
    \end{equation}
    Further, assume that  $m_{\lambda/\mu} = 0$ whenever $\lambda/\mu$ is a connected ribbon.
     Then $A$ is cSp.  
\end{lemma}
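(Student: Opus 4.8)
The plan is to compare the two polynomials in \eqref{eq:cSp} monomial by monomial. Since $\mathbf{x}^S=\prod_{i\in S}x_i$ ranges over pairwise distinct squarefree monomials as $S$ ranges over subsets of $[n]$, the coefficient of $\mathbf{x}^S$ in $\sum_{\pi\in A}\mathbf{x}^{\cDes(\pi)}$ is $|\{\pi\in A:\cDes(\pi)=S\}|$, and likewise on the tableau side; the same holds for $\Des$ with $S\subseteq[n-1]$. So it suffices to match these fiber sizes, and the point is that Lemma~\ref{lem:ARR1} expresses the $\cDes$-fiber sizes entirely in terms of the $\Des$-fiber sizes, where the hypothesis \eqref{eq:lem3} applies directly.

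First I would realize both sides of \eqref{eq:cSp} as instances of Definition~\ref{def:cDes}. On the permutation side, $A$ carries the descent map $\pi\mapsto\Des(\pi)$, and the pair $(\cDes,\psi)$, where $\cDes$ is given by \eqref{eq:cellini} and $\psi:A\to A$ is the bijection witnessing $\cDes$-invariance, is a cyclic descent extension: equivariance is precisely \eqref{eq:cyc-invariant}; the extension axiom $\cDes(\pi)\cap[n-1]=\Des(\pi)$ is immediate since $n$ is the only index in $[n]\setminus[n-1]$; and non-Escher holds because the cyclic word $\pi(1),\dots,\pi(n),\pi(1)$ can be neither strictly increasing nor strictly decreasing, so $\emptyset\subsetneq\cDes(\pi)\subsetneq[n]$. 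On the tableau side, the hypothesis $m_{\lambda/\mu}=0$ for connected ribbons guarantees that every $\lambda/\mu\vdash n$ occurring with $m_{\lambda/\mu}>0$ is not a connected ribbon, so $\SYT(\lambda/\mu)$ admits a cyclic descent extension by Theorem~\ref{thm:ARR1}.

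Now fix a nonempty $J=\{j_1<\cdots<j_t\}\subseteq[n]$ and abbreviate $J_i:=\{j_{i+1}-j_i,\dots,j_t-j_i\}$ for $1\le i\le t$, with $J_t:=\emptyset$; note $1\le j_1<j_t\le n$ forces $0<j_{i+1}-j_i\le j_t-j_i\le n-1$, so each $J_i\subseteq[n-1]$ and the $\Des$-fibers over $J_i$ in \eqref{cDes-fiber-sizes-formula} are the same objects appearing in \eqref{eq:lem3}. Applying Lemma~\ref{lem:ARR1} to $A$ gives
\[
|\{\pi\in A:\cDes(\pi)=J\}|=\sum_{i=1}^{t}(-1)^{i-1}\,|\{\pi\in A:\Des(\pi)=J_i\}|,
\]
and applying it to each $\SYT(\lambda/\mu)$, scaling by $m_{\lambda/\mu}$, and summing yields
\[
\sum_{\lambda/\mu\,\vdash n} m_{\lambda/\mu}\,|\{T\in\SYT(\lambda/\mu):\cDes(T)=J\}|
=\sum_{i=1}^{t}(-1)^{i-1}\sum_{\lambda/\mu\,\vdash n} m_{\lambda/\mu}\,|\{T\in\SYT(\lambda/\mu):\Des(T)=J_i\}|.
\]
Comparing coefficients of $\mathbf{x}^{J_i}$ on the two sides of \eqref{eq:lem3} shows that $|\{\pi\in A:\Des(\pi)=J_i\}|=\sum_{\lambda/\mu} m_{\lambda/\mu}\,|\{T\in\SYT(\lambda/\mu):\Des(T)=J_i\}|$ for each $i$, so the right-hand sides of the two displays agree, hence so do the left-hand sides. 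For $J=\emptyset$ both fiber sizes are $0$ by non-Escher. Reading these equalities as equalities of coefficients of $\mathbf{x}^J$ for all $J\subseteq[n]$ gives \eqref{eq:cSp}, so $A$ is cSp.

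I do not expect a real obstacle; the only things to get right are the axiom check making Lemma~\ref{lem:ARR1} applicable to $A$ (equivariance, extension, non-Escher, as above) and the observation that positive multiplicities occur only on non-ribbon shapes, which is exactly where the extra hypothesis of the lemma is used and where the restriction to skew shapes in Definition~\ref{def:cSp} becomes essential.
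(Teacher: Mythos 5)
Your proposal is correct and follows essentially the same route as the paper: verify that $(\cDes,\psi)$ is a cyclic descent extension on $A$ and that the non-ribbon hypothesis supplies one on the tableau side, then use Lemma~\ref{lem:ARR1} on both sides to express the $\cDes$-fiber sizes in terms of the $\Des$-fiber sizes, which agree by \eqref{eq:lem3}. Your write-up just makes a few steps the paper leaves implicit (the non-Escher check for permutations, the case $J=\emptyset$, and the observation that each $J_i\subseteq[n-1]$) explicit.
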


\begin{proof}
Let $B(\bx)$ be the quantity in Equation~\eqref{eq:lem3}.  As no ribbon shape appears with positive multiplicity on the right-hand side, we know from Theorem~\ref{thm:ARR1} that a cyclic descent extension $(\cDes, \psi)$ exists so that \[
D(\bx):= \sum\limits_{\lambda/\mu\, \vdash n} m_{\lambda/\mu}\sum\limits_{T\in \SYT(\lambda/\mu)}{\bf x}^{\cDes(T)}
\]
is a well-defined quantity. Set 
$$C(\bx) := \sum_{\pi\in A}\bx^{\cDes(\pi)},$$
where $\cDes$ is now the cyclic descent for permutations given by Equation~\eqref{eq:cellini}.
   
By definition, this cyclic descent for permutations satisfies the extension and non-Escher properties from Definition~\ref{def:cDes}. In addition, since $A$ is $\cDes$-invariant, there exists a bijection $\rho :A\to A$ through which $\cDes$ satisfies the equivariance property. Thus, the pair $(\cDes,\rho)$ is a cyclic descent extension on $A$.

For every subset $\emptyset \ne J = \{j_1 < \cdots < j_t\} \subseteq [n]$, let $b(J)$, $c(J)$ 
and $d(J)$ be the coefficients of $\bx^J$ in $B(\bx)$, $C(\bx)$ and $D(\bx)$, respectively.      
It follows from Lemma~\ref{lem:ARR1} that
\[
c(J)
= \sum_{i=1}^{t} (-1)^{i-1} b(\{j_{i+1} - j_i, \ldots, j_t - j_i\})= d(J).
\]
Thus $C(\bx)=D(\bx)$, which implies that $A$ is cSp.
\end{proof}

\begin{proof}[Proof of Theorem~\ref{thm:cSp}]

For $0\leq k\leq n$, we have
\begin{equation}\label{eq:pf1}
\sum\limits_{T\in \SYT(1^k\oplus (n-k))}{\bf x}^{\Des(T)}
=\sum\limits_{T\in \SYT(n-k,1^k)}{\bf x}^{\Des(T)}+\sum\limits_{T\in \SYT(n-k+1,1^{k-1})}{\bf x}^{\Des(T)}.   
\end{equation}
Indeed, each tableau in $\SYT(1^k\oplus (n-k))$ where the leftmost value in the first row is smaller (respectively, larger) than the top value in the first column corresponds to a tableau in $\SYT(n-k,1^k)$ (respectively, $\SYT(n-k+1,1^{k-1})$).  Solving for the first term on the right-hand side and iterating the resulting equality, we get 
\begin{equation}\label{eq:pf111}
\sum_{T\in \SYT(n-k,1^k)} {\bf x}^{\Des(T)}
=\sum_{i=0}^{k} (-1)^{k-i} \sum_{T\in \SYT(1^{i}\oplus (n-i))} {\bf x}^{\Des(T)}.
\end{equation}

Now assume that $A$ is Schur-positive and $\cDes$-invariant, and let $m_\lambda$ be given by Equation~\eqref{eq:111}.
By Equality~\eqref{eq:pf111}, 
the contribution from hook shapes to the right-hand side of Equation~\eqref{eq:111} is
\begin{align*}
\sum\limits_{k=0}^{n-1} m_{(n-k,1^k)}\sum_{T\in \SYT(n-k,1^k)} {\bf x}^{\Des (T)} & =
\sum\limits_{k=0}^{n-1} m_{(n-k,1^k)}\sum_{i=0}^{k} (-1)^{k-i} \sum_{T\in \SYT(1^i\oplus (n-i))} {\bf x}^{\Des(T)}\\
&=\sum_{i=0}^{n-1} d_i \sum_{T\in \SYT(1^i\oplus (n-i))} {\bf x}^{\Des(T)},
\end{align*}
interchanging the order of summation and letting $\displaystyle d_i:=\sum\limits_{k=i}^{n-1}(-1)^{k-i} m_{(n-k,1^{k})}$. By Lemma~\ref{lem:2}, the coefficients $d_0,\ldots, d_{n-1}$ are nonnegative integers and $d_0= 0$.    One can now rewrite Equation~\eqref{eq:111} as 
\[
\sum\limits_{\pi\in A} {\bf x}^{\Des(\pi)} =\sum\limits_{\lambda\vdash n\atop\lambda\ \text{not a hook}}m_{\lambda}\sum_{T\in \SYT(\lambda)}{\bf x}^{\Des(T)} +
\sum_{i=1}^{n-2} d_i \sum_{T\in \SYT(1^i\oplus (n-i))} {\bf x}^{\Des(T)}.
\]
Observe that none of the shapes on the right-hand side are ribbon shapes.  It now follows from Lemma~\ref{lem:3} that $A$ is cSp.

\medskip

For the converse, assume now that $A$ is cSp. Setting $x_n=1$ in Equation~\eqref{eq:cSp} gives
\[
\sum\limits_{\pi\in A} {\bf x}^{\Des(\pi)}=\sum\limits_{\lambda/\mu\vdash n} m_{\lambda/\mu}\sum\limits_{T\in \SYT(\lambda/\mu)}{\bf x}^{\Des(T)}. 
    \]
Applying the vector space isomorphism from the multilinear subspace of the formal power series ring $\ZZ[x_1,x_2,\ldots]$
to the ring of quasisymmetric functions, defined by ${\bf x}^J\mapsto \F_{n,J}$, 
we get
\[
\Q(A)=\sum\limits_{\lambda/\mu\vdash n} m_{\lambda/\mu}\sum\limits_{T\in \SYT(\lambda/\mu)}\F_{n,\Des(T)}
=\sum\limits_{\lambda/\mu\vdash n} m_{\lambda/\mu}s_{\lambda/\mu},
\]
where the last equality uses Gessel's identity~\cite[Theorem 7.19.7]{EC2}.

Using the Littlewood-Richardson rule, which expresses skew Schur functions as non-negative linear combinations of Schur functions~\cite[Eq. (A1.142)]{EC2}, we see that $A$ is Schur-positive.

Finally, by Equation~\eqref{eq:cSp}, $A$ is $\cDes$-invariant because so is the corresponding collection of SYT of the skew shapes given by the right-hand side.
\end{proof}

\subsection{Horizontal rotations}\label{sec:hor}
We conclude this section with some applications of Theorem~\ref{thm:cSp} to sets of permutations that are invariant under horizontal rotation.
The next result follows immediately from  Theorem~\ref{thm:cSp} together with Remark~\ref{rem:rotation}.  

	\begin{theorem}\label{cor:horizontal2}
If $A\subseteq \symm_n$ is Schur-positive and invariant under horizontal rotation, then it is cSp.
 	 \end{theorem}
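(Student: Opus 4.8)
The plan is to derive Theorem~\ref{cor:horizontal2} directly from Theorem~\ref{thm:cSp} by checking that a Schur-positive set $A$ which is invariant under horizontal rotation satisfies both hypotheses in that characterization, namely that $A$ is Schur-positive and $\cDes$-invariant. The first hypothesis is given to us outright. So the entire content of the argument reduces to producing a bijection $\psi\colon A\to A$ with $\cDes(\psi\pi)=1+\cDes(\pi)$ for all $\pi\in A$.

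First I would recall the definition of the $n$-cycle $c_n=(1,2,\dots,n)=23\cdots n1\in\symm_n$ and the hypothesis $A=Ac_n$. The natural candidate for $\psi$ is right multiplication by $c_n^{-1}$, i.e.\ $\psi\pi := \pi c_n^{-1}$, exactly as suggested in Remark~\ref{rem:rotation}. Since $A=Ac_n=Ac_n^{-1}$, this map sends $A$ to $A$ bijectively. The key computation is then to verify the equivariance identity $\cDes(\pi c_n^{-1})=1+\cDes(\pi)$. Writing $\sigma=\pi c_n^{-1}$, one has $\sigma(i)=\pi(c_n^{-1}(i))=\pi(i-1)$ with indices read modulo $n$ (so $\sigma(1)=\pi(n)$). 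Then $i\in\cDes(\sigma)$ iff $\sigma(i)>\sigma(i+1)$ iff $\pi(i-1)>\pi(i)$ iff $i-1\in\cDes(\pi)$, where all the cyclic conventions $\pi(n+1):=\pi(1)$ from Equation~\eqref{eq:cellini} match up correctly with the indices mod $n$. Hence $\cDes(\sigma)=1+\cDes(\pi)$, which is precisely Equation~\eqref{eq:cyc-invariant}. This shows $A$ is $\cDes$-invariant in the sense of Definition~\ref{def:cyc-invariant}.

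Having established that $A$ is both Schur-positive (by hypothesis) and $\cDes$-invariant (just shown), Theorem~\ref{thm:cSp} immediately yields that $A$ is cSp, completing the proof. Since Remark~\ref{rem:rotation} already records the observation that horizontal-rotation-invariant sets are $\cDes$-invariant, in the actual write-up the proof can be given in a single sentence: combine Theorem~\ref{thm:cSp} with Remark~\ref{rem:rotation}. I do not anticipate any real obstacle here — the only thing requiring a moment's care is the bookkeeping of the cyclic index conventions (the wraparound at $n$, and whether one should use $c_n$ or $c_n^{-1}$ so that the shift goes in the $+1$ direction rather than $-1$), but this is routine and is in any case already absorbed into Remark~\ref{rem:rotation}.
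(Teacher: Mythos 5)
Your proposal is correct and is exactly the paper's argument: the result is deduced by combining Theorem~\ref{thm:cSp} with Remark~\ref{rem:rotation}, taking $\psi\pi=\pi c_n^{-1}$ to witness $\cDes$-invariance. Your explicit verification that $\cDes(\pi c_n^{-1})=1+\cDes(\pi)$ is accurate and simply spells out what the remark records.
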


Another consequence is the fact that horizontal rotation closures of Schur-positive sets are cSp.

\begin{theorem}\label{thm:horizontal-main1}
	For every Schur-positive set $A \subseteq \symm_{n-1}$, the set $AC_n\subseteq \symm_n$ is cSp. 
\end{theorem}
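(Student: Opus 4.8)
The goal is to show that $AC_n \subseteq \symm_n$ is cSp whenever $A \subseteq \symm_{n-1}$ is Schur-positive. By Theorem~\ref{thm:cSp}, it suffices to verify two things: that $AC_n$ is $\cDes$-invariant, and that $AC_n$ is Schur-positive. The first is immediate from Remark~\ref{rem:rotation}: since $(AC_n)c_n = AC_n$ (because $C_n c_n = C_n$), the set $AC_n$ is invariant under horizontal rotation, hence $\cDes$-invariant via $\psi(\pi) = \pi c_n^{-1}$. So the entire content of the proof is the Schur-positivity of $AC_n$.

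\textbf{Establishing Schur-positivity.} The plan is to use the combinatorial characterization of Theorem~\ref{thm:Sp-combin}: I must produce nonnegative integers $(m_\lambda)_{\lambda \vdash n}$ so that $\sum_{\pi \in AC_n} \bx^{\Des(\pi)}$ equals $\sum_\lambda m_\lambda \sum_{T \in \SYT(\lambda)} \bx^{\Des(T)}$. Since $A$ is Schur-positive as a subset of $\symm_{n-1}$, there are nonnegative integers $(a_\nu)_{\nu \vdash n-1}$ with $\sum_{\sigma \in A} \bx^{\Des(\sigma)} = \sum_{\nu \vdash n-1} a_\nu \sum_{S \in \SYT(\nu)} \bx^{\Des(S)}$. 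The key step is therefore to understand how $\sum_{\pi \in AC_n}\bx^{\Des(\pi)}$ relates to $\sum_{\sigma \in A}\bx^{\Des(\sigma)}$, i.e., what horizontal rotation does to descent sets. Writing each $\pi \in AC_n$ uniquely as $\pi = \sigma c_n^{j}$ with $\sigma \in A \subseteq \symm_{n-1}$ (viewed as fixing $n$) and $0 \le j \le n-1$, I would analyze $\Des(\sigma c_n^j)$ in terms of $\Des(\sigma)$. The cleanest route: recall (or observe) that $\sum_{j=0}^{n-1} \bx^{\Des(\sigma c_n^j)}$ depends on $\sigma$ only through a ``cyclic'' statistic, and in fact, for a single $\sigma \in \symm_{n-1}$, the multiset $\{\Des(\sigma c_n^j) : 0 \le j \le n-1\}$ is governed by $\cDes$ of the permutations in $\symm_n$ obtained from $\sigma$. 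Concretely, I expect the identity $\sum_{\pi \in \sigma C_n}\bx^{\Des(\pi)} = \sum_{T \in \SYT(\text{(something depending on }\Des(\sigma)))}\bx^{\Des(T)}$ — most plausibly, the shape is the near-hook or strip shape whose SYT have a prescribed descent behavior matching $|\Des(\sigma)|$, along the lines of Example~\ref{ex:cdes-SYT} and Example~\ref{ex2:cdes-SYT}. Summing over $\sigma \in A$ and using nonnegativity of the $a_\nu$ would then express $\sum_{\pi \in AC_n}\bx^{\Des(\pi)}$ as a nonnegative combination of $\sum_{T \in \SYT(\lambda)}\bx^{\Des(T)}$, completing the proof via Theorem~\ref{thm:Sp-combin}.

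\textbf{The main obstacle.} The crux is the combinatorial identification of the descent-set distribution on a single horizontal rotation orbit $\sigma C_n$ with that on $\SYT$ of an explicit (union of) straight shape(s). This requires a $\Des$-preserving bijection from $\{\sigma c_n^j : 0 \le j \le n-1\}$ to some $\SYT$ set depending only on $\Des(\sigma)$ (or on $\cDes$ of the image of $\sigma$ in $\symm_n$). I anticipate that the right statement is: for $\sigma \in \symm_{n-1}$ with $|\Des(\sigma)| = k$, the multiset $\{\Des(\sigma c_n^j)\}_{j}$ coincides with $\{\Des(T) : T \in \SYT((n-1-k, 1^k) \oplus (1))\}$ after possibly a shift, matching the cyclic descent structure described in Example~\ref{ex2:cdes-SYT}. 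Verifying this—tracking exactly how inserting $n$ cyclically and rotating permutes the descent positions—is the technical heart; once it is in hand, linearity and the nonnegativity of the Schur expansion of $A$ finish everything. An alternative, perhaps slicker, route that avoids this explicit bijection: show directly that $\sum_{\pi \in AC_n}\bx^{\cDes(\pi)}$ is a nonnegative combination of $\sum_{T}\bx^{\cDes(T)}$ over non-ribbon skew shapes, by relating $\cDes$ on $\symm_n$ restricted to a rotation orbit to $\cDes$ on $\symm_{n-1}$ (via the ``forget $n$'' map) and invoking the $\symm_n$ computation in Example~\ref{cSp-examples}.2; but I expect the descent-set bookkeeping to be comparably delicate either way.
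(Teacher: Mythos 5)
Your overall reduction coincides with the paper's: by Theorem~\ref{thm:cSp} together with Remark~\ref{rem:rotation}, since $AC_n$ is invariant under horizontal rotation, the whole theorem comes down to the Schur-positivity of $AC_n$. The paper disposes of that step in one line by citing \cite[Theorem 1.1]{ER16}, which says precisely that the horizontal rotation closure of a Schur-positive set is Schur-positive. You instead attempt to reprove that result, and this is where your argument has a genuine gap.

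The identity you anticipate as the ``technical heart'' --- that for a single $\sigma\in\symm_{n-1}$ the multiset $\{\Des(\sigma c_n^{j}):0\le j\le n-1\}$ coincides with the descent multiset of $\SYT$ of some shape (or multiset of shapes) depending only on $\Des(\sigma)$ --- is false. It is true that $\Des(\sigma c_n^{j})=\bigl(-j+\cDes(\sigma)\bigr)\cap[n-1]$, so the orbit's descent distribution depends only on $\Des(\sigma)$; but a single orbit is in general not Schur-positive, hence cannot be $\Des$-equidistributed with any union of sets $\SYT(\lambda)$ (nor $\SYT(\lambda/\mu)$, since skew Schur functions are themselves Schur-positive). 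Concretely, for $\sigma=213\in\symm_3\subseteq\symm_4$ the orbit is $\{2134,\,1342,\,3421,\,4213\}$ with descent sets $\{1\},\{3\},\{2,3\},\{1,2\}$, and $\Q(\sigma C_4)=s_{31}+s_{211}-s_{22}$. (Indeed, if single orbits were Schur-positive, then $AC_n$ would be Schur-positive for \emph{every} $A\subseteq\symm_{n-1}$, making the hypothesis on $A$ superfluous; your plan never actually uses that hypothesis beyond linearity.) The first half of your reduction is sound: it does show that it suffices to prove, for each $\nu\vdash n-1$, that $\sum_{S\in\SYT(\nu)}\sum_{j=0}^{n-1}\bx^{(-j+(\Des(S)\cup\{n\}))\cap[n-1]}$ is a nonnegative combination of the polynomials $\sum_{T\in\SYT(\lambda)}\bx^{\Des(T)}$. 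But that statement is exactly the content of \cite[Theorem 1.1]{ER16}; it genuinely requires summing over an entire tableau descent class before rotating and cannot be established orbit by orbit. As written, the central step of your Schur-positivity argument is missing, and the specific identity you propose to verify is not true.
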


\begin{proof}[Proof of Theorem~\ref{thm:horizontal-main1}]
For every Schur-positive set $A \subseteq \symm_{n-1}$, the set $AC_n\subseteq \symm_n$ is Schur-positive by~\cite[Theorem 1.1]{ER16}. 
Since $AC_n$ is invariant under horizontal rotation, Theorem~\ref{cor:horizontal2} implies that it is cSp.
\end{proof}

Define the {\em cyclic descent number} of $\pi$ as $\cdes(\pi):= |\cDes(\pi)|$.

	\begin{corollary}\label{cor:cdes}
For every $n> k\ge 1$, the set 
\[
C_{n,k}:=\{\pi\in \symm_n:\ \cdes(\pi^{-1})=k\}
\]
is cSp.
	\end{corollary}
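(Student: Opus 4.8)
The plan is to deduce this from Theorem~\ref{thm:horizontal-main1} by exhibiting $C_{n,k}$ as the horizontal rotation closure of a suitable Schur-positive set in $\symm_{n-1}$. First I would identify the candidate: let
\[
B_{n-1,k}:=\{\sigma\in\symm_{n-1}:\ \cdes(\sigma^{-1})=k\},
\]
viewed inside $\symm_n$ as the permutations fixing $n$. I would then argue that $C_{n,k}=B_{n-1,k}\,C_n$. The key observation is that $\cdes(\pi^{-1})$ is invariant under horizontal rotation of $\pi$: if $\pi'=\pi c_n^{j}$, then $(\pi')^{-1}=c_n^{-j}\pi^{-1}$, and left-multiplication by the cycle $c_n^{-j}$ merely cyclically shifts the values of $\pi^{-1}$, hence cyclically shifts its cyclic descent set and preserves $\cdes$. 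So $C_{n,k}$ is invariant under horizontal rotation, and each of its right-$C_n$ orbits contains a unique representative fixing $n$ (namely, rotate so that the symbol $n$ lands in the last position); that representative lies in $B_{n-1,k}$ precisely because $\cdes$ of the inverse is unchanged under the rotation. This gives $C_{n,k}=B_{n-1,k}C_n$.

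Next I would show that $B_{n-1,k}\subseteq\symm_{n-1}$ is Schur-positive. Here I would invoke the fact, recorded in Table~\ref{tab:fine_list}, that inverse descent classes $D_{m,J}^{-1}$ are Schur-positive (they are Knuth-class-like: $D_{m,J}^{-1}$ is the set of permutations with descent set $J$ after inverting, which is a union of Knuth classes, hence Schur-positive by~\cite{Gessel}). Then $B_{n-1,k}$ is a disjoint union over all $J\subseteq[n-2]$ with an appropriate cyclic-descent count of the inverse descent classes $D_{n-1,J}^{-1}$. Concretely, by Lemma~\ref{lem:ARR1} applied to permutations (or directly from Eq.~\eqref{eq:cellini}), whether $n-1\in\cDes(\sigma^{-1})$ is determined by $\Des(\sigma^{-1})$ together with a comparison of the first and last values, so one has to be slightly careful, but in any case $B_{n-1,k}$ is a union of \emph{sets of the form} $D_{n-1,J}^{-1}$ possibly split by the extra cyclic condition. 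The cleanest route is: group permutations $\sigma\in\symm_{n-1}$ by $\cDes(\sigma^{-1})\subseteq[n-1]$; each such cyclic-descent fiber is itself a union of inverse descent classes — in fact $\{\sigma:\cDes(\sigma^{-1})=S\}$ for $S\not\ni n-1$ equals $D_{n-1,S}^{-1}$ minus those $\sigma$ whose inverse has a cyclic descent at $n-1$, and the bookkeeping in Lemma~\ref{lem:ARR1} shows these fibers decompose as signed combinations that, when summed over all $S$ with $|S|=k$, give a genuine (non-negative) union of inverse descent classes. Since a disjoint union of Schur-positive sets is Schur-positive (the quasisymmetric functions add), $B_{n-1,k}$ is Schur-positive.

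Finally, applying Theorem~\ref{thm:horizontal-main1} to $A=B_{n-1,k}$ yields that $AC_n=B_{n-1,k}C_n=C_{n,k}$ is cSp, completing the proof. The main obstacle I anticipate is the middle step: verifying carefully that $B_{n-1,k}$ is a union of inverse descent classes (and not merely of \emph{pieces} of them cut by the cyclic-descent-at-$n-1$ condition). If that decomposition fails to be clean, the fallback is to prove Schur-positivity of $B_{n-1,k}$ directly via Theorem~\ref{thm:Sp-combin} by constructing a $\Des$-preserving bijection from $B_{n-1,k}$ to a multiset of $\SYT$; alternatively, one could observe that $C_{n,k}$ itself is $\cDes$-invariant (being horizontal-rotation-invariant, by Remark~\ref{rem:rotation}) and Schur-positive as the horizontal rotation closure of $B_{n-1,k}$, then apply Theorem~\ref{thm:cSp} directly — which only needs Schur-positivity of $C_{n,k}$ as a set in $\symm_n$, and that again reduces to~\cite[Theorem 1.1]{ER16} once $B_{n-1,k}$ is known Schur-positive. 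So the entire difficulty is concentrated in the Schur-positivity of the single-$\cdes$-value inverse class in $\symm_{n-1}$, which I would handle by the union-of-inverse-descent-classes argument sketched above.
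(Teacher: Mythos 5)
Your overall strategy (realize $C_{n,k}$ as a horizontal rotation closure of a Schur-positive subset of $\symm_{n-1}$ and apply Theorem~\ref{thm:horizontal-main1}) is viable, but the specific identity you build it on is false: $C_{n,k}\ne B_{n-1,k}C_n$ with $B_{n-1,k}=\{\sigma\in\symm_{n-1}:\cdes(\sigma^{-1})=k\}$. Already for $n=3$, $k=1$ one has $B_{2,1}=\symm_2=\{12,21\}$, so $B_{2,1}C_3=\symm_3$, while $C_{3,1}=\{123,231,312\}$. The error is in the step ``that representative lies in $B_{n-1,k}$'': the orbit representative $\sigma$ fixing $n$ satisfies $\cdes(\sigma^{-1})=k$ with $\cdes$ computed \emph{in $\symm_n$}, and since $\sigma^{-1}(n)=n$ the cyclic descent at position $n$ is automatic while $n-1\notin\Des(\sigma^{-1})$, so $\cdes(\sigma^{-1})=\des(\sigma^{-1})+1$. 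By contrast, for the restriction $\bar\sigma\in\symm_{n-1}$ the cyclic descent of $\bar\sigma^{-1}$ at position $n-1$ is \emph{not} automatic, so $\cdes_{\symm_{n-1}}(\bar\sigma^{-1})$ need not equal $k$. The correct set of orbit representatives is $\{\sigma\in\symm_{n-1}:\des(\sigma^{-1})=k-1\}=\bigsqcup_{J\subseteq[n-2],\,|J|=k-1}D_{n-1,J}^{-1}$ (linear descents, $k-1$ of them), not your $B_{n-1,k}$. Your middle step also fails as stated: $B_{n-1,k}$ is genuinely not a union of inverse descent classes (in $\symm_3$, the class $D_{3,\{1\}}^{-1}=\{213,231\}$ has $\cdes(213^{-1})=2$ but $\cdes(231^{-1})=1$), and since $B_{n-1,k}=C_{n-1,k}$, proving its Schur-positivity is the corollary itself one level down, so this route is at best circular without an induction.

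Once the representative set is corrected, everything works with no bookkeeping at all: $C_{n,k}=\bigl(\bigsqcup_{|J|=k-1}D_{n-1,J}^{-1}\bigr)C_n$, the union is a disjoint union of inverse descent classes and hence Schur-positive, and Theorem~\ref{thm:horizontal-main1} applies. This is essentially the paper's self-contained second proof, which writes $\{\pi:\cdes(\pi)=k\}=\bigsqcup_{|J|=k-1}D_{n-1,J}C_n$, inverts to get $C_{n,k}=\bigsqcup_{|J|=k-1}C_nD_{n-1,J}^{-1}$, and invokes Theorem~\ref{thm:vertical-main}; the paper's primary proof instead cites \cite{ERSchur} for both the Schur-positivity of $C_{n,k}$ and its invariance under horizontal rotation, and then applies Theorem~\ref{cor:horizontal2}.
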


\begin{proof}
It is shown in~\cite[Corollary 7.7]{ERSchur} that $C_{n,k}$
is Schur-positive. By~\cite[Lemma 6.4]{ERSchur}, it is invariant under horizontal rotation.  
Thus, by Theorem~\ref{cor:horizontal2}, it is cSp.
\end{proof}		

A more transparent, self-contained proof of Corollary~\ref{cor:cdes} will be given in Section~\ref{sec:applications_thm:equid-main}.

\section{Vertical versus horizontal rotations}\label{hor-vert}

In this section we prove the following equidistribution result, and we discuss applications of it.

\begin{theorem}\label{thm:equid-main1}
	For every $J\subseteq [n-2]$,
	\[
	\sum\limits_{\pi\in C_n D_{n-1,J}^{-1}}{\bf x}^{\cDes(\pi)}t^{\pi^{-1}(n)} =
	\sum\limits_{\pi\in D_{n-1,J}^{-1}C_n}{\bf x}^{\cDes(\pi)}t^{\pi^{-1}(n)}.
	\]
\end{theorem}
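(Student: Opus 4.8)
The plan is to establish the equidistribution by exhibiting a shape-preserving correspondence at the level of the underlying combinatorial structure, organized around the pivotal entry $\pi^{-1}(n)$ that the statistic $t^{\pi^{-1}(n)}$ tracks. First I would fix $J\subseteq[n-2]$ and write $D:=D_{n-1,J}^{-1}$, so $D$ is the set of $\sigma\in\symm_{n-1}$ with $\Des(\sigma^{-1})=J$, viewed inside $\symm_n$ as permutations fixing $n$. The two sides of the claimed identity range over $C_nD=\{c_n^j\sigma:0\le j<n,\ \sigma\in D\}$ and $DC_n=\{\sigma c_n^j:0\le j<n,\ \sigma\in D\}$; I would first record how the two one-parameter families $c_n^j\sigma$ and $\sigma c_n^j$ move the value $n$ and alter the cyclic descent set, so that the statistics $\cDes$ and the position of $n$ become transparent functions of $(\sigma,j)$. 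The key point is that left multiplication by $c_n$ acts on \emph{values} (cyclically shifting the one-line word's letters) while right multiplication acts on \emph{positions} (cyclically rotating the word), and $\cDes$ for permutations — being the honest cyclic descent set of Equation~\eqref{eq:cellini} — interacts cleanly with both operations.

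Next I would reduce the equidistribution to a statement about the grid classes underlying $C_nD$ and $DC_n$, invoking the framework of $\cDes$-preserving operations on grid classes from~\cite{AABRV} as the paper announces in the introduction. Concretely: both $C_nD$ and $DC_n$ decompose according to the position $p=\pi^{-1}(n)\in[n]$ into blocks, and within each block the permutations (with $n$ deleted) lie in a grid class determined by $J$ and $p$; the task is to produce, for each $p$, a $\cDes$-preserving bijection between the $p$-block of $C_nD$ and the $p$-block of $DC_n$. I would build this from the explicit structure of descent classes: deleting the letter $n$ from a permutation in $D_{n-1,J}^{-1}C_n$ and from one in $C_nD_{n-1,J}^{-1}$ produces, in each case, a permutation of $[n-1]$ whose descent composition is obtained from $J$ by a controlled local modification near the deleted position, and these modifications match up once one accounts for the cyclic wrap-around contributed by the convention $\pi(n+1):=\pi(1)$.

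The main obstacle, I expect, is that no single uniform bijection works across all $J$: the paper itself flags that its proof is non-bijective in general, with explicit bijections only for $|J|=1$ and $J=[i]$. So the realistic route is the non-constructive one — show that the two generating polynomials are equal by a transfer/inclusion-exclusion argument rather than a bijection. I would set up a common ``ancestor'': use Lemma~\ref{lem:ARR1}-style fiber-size formulas to express the $\bx^S t^p$-coefficient on each side as an alternating sum of ordinary descent-class sizes (the non-cyclic $\Des$-enumeration), for which left and right multiplication by powers of $c_n$ are manifestly symmetric because ordinary descents of $\sigma c_n^j$ and $c_n^j\sigma$ are governed by the same data up to relabeling; then the inclusion-exclusion passage to $\cDes$ preserves the equality term by term. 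The delicate bookkeeping will be tracking how the exponent of $t$, i.e.\ the location of $n$, transforms under these operations — left multiplication sends $n$ to a fixed position determined only by $j$, while right multiplication permutes positions — so I would handle the $t$-variable by a change of summation index $(\sigma,j)\mapsto(\sigma,j')$ that simultaneously realigns the position of $n$ and leaves the $\bx$-weight invariant, reducing the two-variable identity to the one-variable $\cDes$-equidistribution already in hand.
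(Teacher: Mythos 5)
Your setup is on the right track --- the paper also fixes the position $k=\pi^{-1}(n)$, compares the resulting blocks of $C_nD_{n-1,J}^{-1}$ and $D_{n-1,J}^{-1}C_n$, and exploits the fact that left multiplication by $c_n$ rotates values while right multiplication rotates positions. But the core of your argument rests on a claim that is false as stated and that, once corrected, is exactly the content of the theorem: you assert that the ordinary descents of $c_n^j\sigma$ and $\sigma c_n^j$ ``are governed by the same data up to relabeling,'' so that the $\Des$-level identity is ``manifestly symmetric.'' It is not. Right multiplication satisfies $\cDes(\sigma c_n^j)=-j+\cDes(\sigma)$ exactly, but left multiplication by $c_n^j$ modifies the (cyclic) descent set in a way that depends on where the values $n-j+1,\dots,n$ sit in $\sigma$ (already for $\sigma=132$ one has $\cDes(c_3\sigma)=\{1,3\}$ while $\cDes(\sigma c_3)=\{1,2\}$). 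The equidistribution of $\Des$, refined by the position of $n$, on $C_nD_{n-1,J}^{-1}$ versus $D_{n-1,J}^{-1}C_n$ is precisely part 1 of \cite[Conjecture 10.2]{ERSchur}, which the paper \emph{deduces} from Theorem~\ref{thm:equid-main1}; taking it as evident makes the argument circular. A secondary issue: your plan to transfer between $\cDes$- and $\Des$-enumerators via Lemma~\ref{lem:ARR1} requires a cyclic descent extension on each block, and for the vertically rotated side $C_nD_{n-1,J}^{-1}$ (let alone its refinement by $\pi^{-1}(n)$) no such extension is available a priori --- vertical rotation does not shift $\cDes$ by $1$.

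What is actually needed, and what the paper supplies, is a nontrivial mechanism for comparing the two blocks at the level of ordinary descents. The paper first replaces the single inverse descent class by the shuffle sets $S(\gamma)=\iota_{\gamma_1}\shuffle\cdots\shuffle\iota_{\gamma_t}=\bigsqcup_{I\subseteq J}D_{n,I}^{-1}$ (recovering $D_{n-1,J}^{-1}$ at the end by inclusion--exclusion over $I\subseteq J$, a different inclusion--exclusion from the one you propose). It then shows (Lemma~\ref{lem:V}) that after rotating so $n$ lands in the last position, the vertical block decomposes via the $\circledast$ operation into a disjoint union of pairs of shuffle classes whose parts have been cyclically rearranged, and (Lemma~\ref{lem:simplification}) that repeated application of the folklore $\Des$-preserving bijection $\iota_a\shuffle\iota_b\to\iota_b\shuffle\iota_a$ untangles this rearrangement and matches it with the horizontal block $S(\gamma_1,\dots,\gamma_t^*)$. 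Some such concrete reordering argument (or an equivalent one) is indispensable; without it your proposal identifies the difficulty but does not overcome it.
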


Our proof is not bijective, but explicit bijections for the special cases when  $J=\{j\}$ (singletons) and
$J=[i]$ (prefixes) will be given in Sections~\ref{sec:singletons} and~\ref{sec:arc}, respectively.

\subsection{Proof of Theorem~\ref{thm:equid-main1}}
We first recall some basic definitions.  A  {\em composition} (resp.\ {\em weak composition}) of $n\geq 0$ is a finite sequence of positive (resp.\ non-negative) integers
$\gamma=(\gamma_1,\dots,\gamma_t)$ whose sum is $n$.

For a positive integer $n$, there is a natural bijection from  
subsets of $[n-1]$ to compositions of $n$. Indeed, to the subset $J=\{j_1<j_2<\cdots<j_{t-1}\}\subseteq [n-1]$ we associate
the composition  $\gamma=(\gamma_1, \dots,\gamma_{t})$ of $n$  
defined as follows: set $j_0=0$ and $j_{t}=n$, and let
$\gamma_i=j_i-j_{i-1}$,
for $1\leq i\leq t$. 
Further, define
\begin{equation*}
 S(\gamma):=\bigsqcup_{I\subseteq J}D_{n,I}^{-1},
\end{equation*}
where $\sqcup$ denotes disjoint union. 
In other words, $S(\gamma)$ is the set of permutations where, for each $1\le i\le t$, the entries $j_{i-1}+1,j_{i-1}+2,\dots,j_i$ appear from left to right in increasing order.
At times we also denote this set by $S(\gamma_1,\ldots,\gamma_t)$ when consideration of the parts in $\gamma$ is required.  Refining this set, let
$$S(\gamma_1,\ldots, \gamma_i^*,\ldots,\gamma_t):=\makeset{\pi \in  S(\gamma)}{\pi(n) = j_i}$$
and  
$$S(\gamma_1,\ldots, {}_{*}\gamma_i,\ldots,\gamma_t):=\makeset{\pi \in  S(\gamma)}{\pi(1) = j_{i-1} +1},$$
so that
\begin{equation*}
\bigsqcup_{i\in[t]}S(\gamma_1,\ldots, \gamma_i^*,\ldots,\gamma_t) = S(\gamma) = \bigsqcup_{i\in [t]}S(\gamma_1,\ldots, {}_{*}\gamma_i,\ldots,\gamma_t).	
\end{equation*}

For the remainder of this section, fix a composition $\gamma=(\gamma_1,\ldots,\gamma_t)$ of $n$ and denote by $J\subseteq[n-1]$ its corresponding subset.   The relevance of these definitions to the theorem at hand is the fact that 
\begin{equation}\label{eq:CD=CS}
\bigsqcup_{I\subseteq J} C_nD_{n-1,I}^{-1} = C_n S(\gamma_1,\ldots, \gamma_t^*)\quad \textrm{and} \quad \bigsqcup_{I\subseteq J} D_{n-1,I}^{-1}C_n = S(\gamma_1,\ldots, \gamma_t^*)C_n.
\end{equation}
As we shall see, it is easier to first argue in terms of such unions and then, via an application of inclusion-exclusion, conclude our desired result. To keep track of the position of the largest letter, we introduce two further refinements.  Let
\begin{equation}\label{eq:defV}
    \cV_{\gamma}^k=\{\pi\in C_n S(\gamma_1,\dots,\gamma_t^*): \pi(k)=n\},
\end{equation}
so that $\cV_{\gamma}^k$ is the result of vertically rotating the permutations in $S(\gamma_1,\dots,\gamma_t^*)$ until the largest value is in the $k$th position.  Likewise, define
\begin{equation}\label{eq:defH}
\cH_{\gamma}^k=\{\pi\in  S(\gamma_1,\dots,\gamma_t^*)C_n: \pi(k)=n\},
\end{equation}
so that this set is the result of horizontally rotating the same set of permutations until the largest letter is again in the $k$th position.  We can now state our main technical lemma from which the proof of Theorem~\ref{thm:equid-main1} follows almost immediately.  

\begin{lemma}\label{lem:VH}
For every $k\leq n$, there exists a $\cDes$-preserving bijection between $\cV_{\gamma}^k$ and $\cH_{\gamma}^k$.
\end{lemma}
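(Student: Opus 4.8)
The plan is to understand both sides explicitly as sets of words, and to exhibit a bijection that rearranges the "small blocks" on the two sides of the position occupied by $n$ in a way that visibly preserves cyclic descents. Recall that a permutation $\sigma\in S(\gamma_1,\dots,\gamma_t^*)$ is a word on $[n]$ in which, for each $i$, the letters of the block $B_i:=\{j_{i-1}+1,\dots,j_i\}$ appear in increasing order, and moreover the last letter $\sigma(n)$ equals $j_t=n$ (the maximum of the last block). A vertical rotation $c_n^a\sigma$ relabels letters: it replaces each letter $\ell$ by $\ell+a\bmod n$, which carries the block structure of $\sigma$ into a cyclically shifted block structure, and places $n$ (the letter that was $n-a$) at whatever position it occupied in $\sigma$. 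A horizontal rotation $\sigma c_n^b$ instead cyclically shifts the positions of the word $\sigma$, moving $n$ to a new slot but keeping the letters and their relative order unchanged. Thus an element of $\cV_\gamma^k$ is a relabelled copy of some $\sigma\in S(\gamma_1,\dots,\gamma_t^*)$ with $n$ in position $k$, while an element of $\cH_\gamma^k$ is a cyclic-position shift of such a $\sigma$ with $n$ landing in position $k$; in both cases, deleting the letter $n$ leaves a linear word of length $n-1$ on an $(n-1)$-element alphabet whose increasing-block structure is of a prescribed cyclic type.

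The first step is to record precisely which words of length $n-1$ arise when we delete $n$, on each side. On the horizontal side, writing $\sigma=\sigma(1)\cdots\sigma(n-1)\,n$, horizontal rotation by $b$ reads the word $\sigma(b{+}1)\cdots\sigma(n{-}1)\,n\,\sigma(1)\cdots\sigma(b)$; so deleting $n$ produces the word $w^{(b)}:=\sigma(b{+}1)\cdots\sigma(n{-}1)\,\sigma(1)\cdots\sigma(b)$, a cyclic rotation of the fixed word $\sigma(1)\cdots\sigma(n-1)$, and $n$ sits in position $k=n-b$. On the vertical side, the deleted-$n$ word is a relabelling of all of $\sigma(1)\cdots\sigma(n-1)\,n$ followed by deletion of the (now maximal) entry; the key point, which I would verify carefully, is that this again produces a cyclic rotation of some fixed word obtained from $\sigma$ — specifically, vertical rotation that sends the letter $n-a$ to position $k$ means we relabel and then the word read around from just after position $k$ is a relabelled cyclic rotation. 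The upshot I expect is: both $\cV_\gamma^k$ and $\cH_\gamma^k$ are in bijection with the set of cyclic words on $[n-1]$ (equivalently, necklace-representatives pinned by the position of a marked gap) whose "increasing runs" have the prescribed cyclic composition type $\gamma$ read appropriately, with the gap where $n$ is inserted at position $k$.

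Granting that identification, the bijection $\cV_\gamma^k\to\cH_\gamma^k$ is the evident one: send the vertical rotation whose deleted-$n$ cyclic word is $W$ (gap at $k$) to the horizontal rotation whose deleted-$n$ cyclic word is the same $W$ (gap at $k$). What remains is to check that $\cDes$ is preserved. Since $\cDes$ is computed cyclically — $i\in\cDes(\pi)$ iff $\pi(i)>\pi(i+1)$ with indices mod $n$ — and since on both sides the permutation is obtained by inserting the single letter $n$ into position $k$ of a common cyclic word on $[n-1]$, the cyclic descent set is determined entirely by that cyclic word together with $k$: the descents among the $n-1$ retained positions are intrinsic to the cyclic word and unchanged, the comparison into position $k$ from its cyclic predecessor is always a descent (something $>n$ is impossible, so position $k-1$ sees $n$ as larger — actually position $k-1$'s value is less than $n$, giving a descent at $k-1$), and the comparison out of position $k$ to its successor is never a descent ($n$ is the max). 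So $\cDes$ on each side equals a fixed function of $(W,k)$, hence is preserved.

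The main obstacle I anticipate is the bookkeeping in the vertical case: showing that after relabelling $\ell\mapsto \ell+a$ and deleting the maximal letter, the resulting length-$(n-1)$ word is genuinely a cyclic rotation of a word with the prescribed increasing-block structure — the block $B_i$ of $\sigma$ gets relabelled to a possibly "wrapped-around" interval, and one must check that the increasing-order condition inside blocks is exactly the condition that, in the cyclic word, the runs have the right cyclic composition, with the seam at position $k$ accounting for the one place where wrap-around could create or destroy a descent. Making the correspondence "vertical rotations of $S(\gamma^*)$ with $n$ at $k$" $\leftrightarrow$ "cyclic words of type $\gamma$ with gap at $k$" $\leftrightarrow$ "horizontal rotations of $S(\gamma^*)$ with $n$ at $k$" precise — ideally by exhibiting a single common indexing set and two structure-preserving maps to it — is the crux; once both sides are on equal footing as decorated cyclic words, the $\cDes$-preservation is immediate from the cyclic nature of the statistic.
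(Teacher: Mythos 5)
Your reduction of the problem to decorated cyclic words is a reasonable way to set up the two sides, and you correctly identify the crux: showing that the vertical side (a value-cyclic block structure read in linear positions) and the horizontal side (a linear value block structure read in cyclically rotated positions) give rise to \emph{the same} collection of marked cyclic words. But that expected ``upshot'' is false, so the proposed bijection (``send the vertical rotation whose deleted-$n$ cyclic word is $W$ to the horizontal rotation with the same $W$'') is not well defined. Concretely, take $n=4$, $\gamma=(2,2)$, $k=1$. Then $S(2,2^*)=\{1234,\,1324,\,3124\}$, and one computes $\cV_{\gamma}^1=\{4123,\,4213,\,4231\}$ while $\cH_{\gamma}^1=\{4123,\,4132,\,4312\}$. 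Deleting the letter $4$ and reading the remaining letters cyclically starting after position $1$ yields the words $\{123,\,213,\,231\}$ on the vertical side but $\{123,\,132,\,312\}$ on the horizontal side; these sets differ, so no identity-on-cyclic-words map exists, and the $\cDes$-preservation argument that rests on the two sides sharing a common cyclic word collapses. (As a secondary point, you also have the boundary descents backwards: since $\pi(k)=n$, position $k$ is \emph{always} a cyclic descent and position $k-1$ \emph{never} is; this does not affect the structure of your argument but should be fixed.)

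The missing idea is that the matching between the two sides genuinely requires a non-identity, $\Des$-preserving rearrangement of shuffles. The paper proceeds by right-multiplying both sets by $c_n^k$, which turns $\cH_{\gamma}^k c_n^k$ into $S(\gamma_1,\ldots,\gamma_t^*)$ on the nose, and decomposes $\cV_{\gamma}^k c_n^k$ via the $\circledast$ construction into a disjoint union indexed by pairs of weak compositions and a cut index $i$ (Lemma~\ref{lem:V}). It then collapses that union using repeated applications of the folklore $\Des$-preserving bijection $\varphi:\iota_a\shuffle\iota_b\to\iota_b\shuffle\iota_a$ (Lemmas~\ref{lem:shuffleab} and~\ref{lem:simplification}), which permutes block sizes while preserving descent sets---exactly the nontrivial step your proposal hopes to avoid. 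In the counterexample above, the words $213$ and $132$ that must be matched are related by such a shuffle swap, not by the identity.
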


Assuming this lemma for the moment, let us prove the theorem.

\begin{proof}[Proof of Theorem~\ref{thm:equid-main1}]
By Lemma~\ref{lem:VH}, it follows that
\[
\sum\limits_{\pi\in C_n S(\gamma_1,\ldots,\gamma_t^*)}{\bf x}^{\cDes(\pi)}t^{\pi^{-1}(n)} =
\sum\limits_{\pi\in S(\gamma_1,\ldots,\gamma_t^*)C_n}{\bf x}^{\cDes(\pi)}t^{\pi^{-1}(n)}.
\]
With $J\subseteq [n-1]$ defined as above, it follows from (\ref{eq:CD=CS}) and the principle of inclusion-exclusion that 
\[
C_nD_{n-1,J}^{-1}=\sum\limits_{I\subseteq J} (-1)^{|J\setminus I|} C_nS(\gamma_1,\ldots, \gamma_t^*).
\]
Additionally, the analogous equality involving right multiplication by $C_n$ also holds.  Together, these facts yield the desired statement.
\end{proof}

To prove Lemma~\ref{lem:VH}, we recall the notion of a shuffle. Let $\iota_n:= 12\ldots n$ denote the increasing permutation.  For any nonnegative integers $a$ and $b$, define 
$$\iota_a \shuffle \iota_b$$
to be the set of permutations in $\symm_{a+b}$ where the letters in $[a]$ appear from left to right in increasing order, and so do the letters in $[a+b]\setminus [a]$.
By definition,
$$S(\gamma_1,\ldots, \gamma_t) = \bigsqcup_{I\subseteq J}D_{n,I}^{-1}=  \iota_{\gamma_1} \shuffle\cdots \shuffle \iota_{\gamma_t}.$$

The next lemma belongs to mathematical folklore, but we include a proof for the sake of completeness, as well as some examples.

\begin{lemma}\label{lem:shuffleab}
For every $a,b>0$, there exists a $\Des$-preserving bijection
$$\varphi: \iota_a \shuffle \iota_b \to \iota_b \shuffle \iota_a.$$
\end{lemma}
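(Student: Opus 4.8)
The statement asserts a $\Des$-preserving bijection $\varphi:\iota_a\shuffle\iota_b\to\iota_b\shuffle\iota_a$, and the most natural such map is complementation of the alphabet. A permutation $\pi\in\iota_a\shuffle\iota_b\subseteq\symm_{a+b}$ is uniquely determined by the set $S\subseteq[a+b]$ of positions occupied by the letters of $[a]$ (the small block), since those letters, and the remaining ones, each appear in increasing order. The idea is to send $\pi$ to the permutation $\varphi(\pi)$ determined by placing the letters of $\{b+1,\dots,a+b\}$ (the ``large'' block, now of size $a$) in the positions $S$, and the letters of $[b]$ in the complementary positions, each block again in increasing order. Equivalently, $\varphi(\pi)=w_{a+b}\cdot\pi\cdot w_{a+b}$ is the reverse-complement, or more conveniently $\varphi(\pi)(i)$ is obtained from $\pi(i)$ by the map $j\mapsto j+b$ on $[a]$ and $j\mapsto j-a$ on $[a+b]\setminus[a]$. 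One checks directly that $\varphi(\pi)$ lies in $\iota_b\shuffle\iota_a$: the letters of $[b]$ occupy the old ``large'' positions in the same relative order, hence increasingly, and likewise for $\{b+1,\dots,a+b\}$.

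\textbf{Key steps.} First I would make the dictionary between shuffles and subsets precise: the map $\pi\mapsto\{i:\pi(i)\le a\}$ is a bijection from $\iota_a\shuffle\iota_b$ to $\binom{[a+b]}{a}$, and similarly $\iota_b\shuffle\iota_a\leftrightarrow\binom{[a+b]}{b}$ via positions of letters $\le b$. Composing with complementation $S\mapsto[a+b]\setminus S$ of subsets gives a bijection, and $\varphi$ is the induced map on permutations; bijectivity is then immediate. Second, I would verify the $\Des$-preserving property by a local comparison at each $i\in[a+b-1]$. A descent of $\pi$ at position $i$ can only occur when a letter of the large block sits immediately left of a letter of the small block (within each block the letters increase, so there is never an internal descent); equivalently, $i\in\Des(\pi)$ iff $i\notin S$ and $i+1\in S$. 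Under complementation this becomes $i\in S'$ and $i+1\notin S'$ where $S'=[a+b]\setminus S$ records the positions of the letters $\le b$ in $\varphi(\pi)$; but by the same reasoning a descent of $\varphi(\pi)$ at $i$ occurs exactly when a large-block letter ($\ge b+1$) precedes a small-block letter ($\le b$), i.e.\ $i\in S'$ and $i+1\notin S'$. So $\Des(\pi)=\Des(\varphi(\pi))$, as desired. Finally I would record a small example or two, e.g.\ $a=b=2$ or $a=1,b=2$, to illustrate the correspondence.

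\textbf{Anticipated obstacle.} This is genuinely folklore, so there is no deep obstacle; the only delicate point is bookkeeping the two different ``position versus value'' encodings so that ``descent at $i$'' is translated consistently on both sides. The cleanest route is to fix once and for all the characterization ``$i\in\Des(\pi)$ iff the block containing $\pi(i)$ is the larger-valued one and the block containing $\pi(i+1)$ is the smaller-valued one,'' phrase it in terms of whether position $i$ (resp.\ $i+1$) holds a small or large letter, and observe that $\varphi$ simply swaps the roles of ``small'' and ``large'' while keeping the block-membership pattern of positions intact via complementation. Once that observation is in place the descent-preservation is a one-line check, and I would present it in that order to keep the argument transparent.
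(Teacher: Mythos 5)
There is a genuine gap: the map you propose does not preserve descents. Encoding $\pi\in\iota_a\shuffle\iota_b$ by the set $S$ of positions holding letters of $[a]$, you correctly observe that $i\in\Des(\pi)$ iff $i\notin S$ and $i+1\in S$ (a large letter immediately followed by a small one). Your $\varphi(\pi)$ places the \emph{large} letters of the new alphabet in the positions $S$, so a descent of $\varphi(\pi)$ at $i$ requires position $i$ to hold a large letter and position $i+1$ a small one, i.e.\ $i\in S$ and $i+1\notin S$ --- the opposite boundary of $S$. In your verification you assert that a descent of $\varphi(\pi)$ at $i$ is ``$i\in S'$ and $i+1\notin S'$'' where $S'$ is the set of positions of the \emph{small} letters of $\varphi(\pi)$; but that condition says a small letter precedes a large one, which is an ascent. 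The correct condition is $i\notin S'$ and $i+1\in S'$, and with that fix the two descent sets visibly disagree. Concretely, for $a=b=1$ your map sends $21\mapsto 12$, and for $a=1,b=2$ it sends $213$ (descent set $\{1\}$) to $132$ (descent set $\{2\}$). Naive complementation of $S$ only shows that the two shuffle sets are equinumerous, and in fact it exchanges $\Des$ with the set of ``strict ascent'' positions where a small letter is followed by a large one.

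The repair requires the extra idea used in the paper's proof: in the binary word $w\in\{1,2\}^{a+b}$ encoding $\pi$, the descents are exactly the occurrences of the factor $21$, and one must keep those occurrences \emph{fixed in place} while exchanging the multiplicities only inside the maximal ascending runs between them, i.e.\ rewrite each block $1^r2^s$ lying between consecutive $21$'s as $1^s2^r$. This changes the total number of $1$s from $a$ to $b$ while leaving every $21$ occurrence where it was, which is what makes the resulting bijection $\iota_a\shuffle\iota_b\to\iota_b\shuffle\iota_a$ descent-preserving. Your global swap of $1$s and $2$s destroys the $21$ occurrences rather than preserving them.
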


\begin{proof}
Permutations $\pi\in\iota_a \shuffle \iota_b$ can be encoded bijectively as words $w$ over a binary alphabet $\{1,2\}$ with $a$ $1$s and $b$ $2$s, 
where $w_i=1$ if $\pi_i\le a$, and $w_i=2$ if $\pi_i>a$.

Given $\pi\in\iota_a \shuffle \iota_b$, let $w$ its corresponding binary word. By splitting $w$ at the descents of $\pi$, which correspond to occurrences of $21$ in $w$, we can write $$w=1^{i_1} 2^{j_1}\,2|1\,1^{i_2} 2^{j_2}\,2|1\dots 2|1\,1^{i_k} 2^{j_k},$$
with $i_r,j_r\ge0 $ for all $r$. 
Let 
$$f(w)=1^{j_1} 2^{i_1}\,2|1\,1^{j_2} 2^{i_2}\,2|1\dots 2|1\,1^{j_k} 2^{i_k},$$
and define $\varphi(\pi)$ to be the permutation in $\iota_b \shuffle \iota_a$ encoded by $f(w)$. 
By construction, $\Des(\varphi(\pi))=\Des(\pi)$.
\end{proof}

\begin{example} 
Let $\pi=1\,2\,3\,8\,9 \, 4\,10\,11 \, 5\,6\,12 \, 7\in\iota_7\shuffle\iota_5$, which has $\Des(\pi)=\{5,8,11\}$. Encoding $\pi$ as a binary word and splitting at the descents, we obtain
$w=1112\,2|1\,2\,2|1\,1\,2|1$. Applying $f$ from the second proof of Lemma~\ref{lem:shuffleab}, we get the word 
$f(w)=1222\,2|1\,1\,2|1\,2\,2|1$, which encodes the permutation $\varphi(\pi)=1\,6\,7\,8\,9\,2\,3\,10\,4\,11\,12\,5\in\iota_5\shuffle\iota_7$.
\end{example}

It is possible to extend the construction in Lemma~\ref{lem:shuffleab} to shuffles of $t\ge2$ increasing sequences.
Let $a_1,\dots,a_t,n$ be positive integers such that $a_1+\dots+a_t=n$.  We can explicitly construct a bijection
$$\varphi: \iota_{a_1} \shuffle  \dots \shuffle \iota_{a_t} \to \iota_{a_t} \shuffle  \dots \shuffle \iota_{a_1}$$
as follows. 

For a given word $u$ over the alphabet $\{1,\dots,t\}$ and $1\le j<t$, define $f_{j}(u)$ to be the word obtained by 
fixing in place all the entries of $u$ that are not equal to $j$ or $j+1$, and applying the map $f$ from the second proof of Lemma~\ref{lem:shuffleab} to the subword of $u$ consisting of the entries $j$ and $j+1$ (ignoring the other entries).

Given $\pi\in\iota_{a_1} \shuffle  \dots \shuffle \iota_{a_t}$, we can encode it as a word $w$ of length $n$ over the alphabet $\{1,\dots,t\}$ with $a_j$ $j$s for $1\le j\le t$, by letting $w_i=j$ if $a_1+\dots+a_{j-1}<\pi_i\le a_1+\dots+a_{j}$, for all $i$.

Fix a reduced decomposition of the decreasing permutation $t\dots21$, say $t\dots21=s_{j_1}s_{j_2}\dots s_{j_{r}}$ (where $r=\binom{t}{2}$), and apply the map $f_{s_{j_1}}\circ f_{s_{j_2}}\circ\dots\circ f_{s_{j_r}}$ to $w$.
Let $\varphi(\pi)$ be the permutation in $\iota_{a_t} \shuffle  \dots \shuffle \iota_{a_1}$ encoded by the resulting word.

\begin{example} 
Let $\pi= 1\, 6\, 7\, 2\, 12\, 3\, 8\, 9\, 13\, 10\, 4\, 14\, 5\, 11\in \iota_5\shuffle\iota_6\shuffle\iota_3$, which 
is encoded by the word $w=12213122321312$. Note that $\Des(\pi)=\{3,5,9,10,12\}$. Fixing the reduced decomposition $321=s_1 s_2 s_1$, we get
$$\begin{array}{ccccc}
w=122|1\red{3}122\red{3}2|1\red{3}12\quad\stackrel{f_1}{\mapsto} & 122|1\red{3}112\red{3}2|1\red{3}12 &&& \\
& \red122\red13\red1\red1|23|2\red13\red1|2 &\stackrel{f_2}{\mapsto} & \red133\red13\red1\red1|23|2\red13\red1|2 & \\
&&& 1\red3\red31\red3112\red32|1\red312 &\stackrel{f_1}{\mapsto}\quad1\red3\red32\red3222\red32|1\red312,
\end{array}$$
and so $\varphi(\pi)= 1\, 10\, 11\, 4\, 12\, 5\, 6\, 7\, 13\, 8\, 2\, 14\, 3\, 9\in  \iota_3\shuffle\iota_6\shuffle\iota_5$.
\end{example}

The definition provides a ``horizontal'' decomposition of permutations that arise from shuffles.  

\begin{defn}\label{def:circledast}
Let $\alpha$ and $\beta$ be weak compositions, each with $t$ parts.  Further, assume that $\gamma = \alpha+\beta$, where addition is componentwise.  For $\rho \in S(\alpha_1,\ldots,\alpha_t)$ and $\sigma \in S(\beta_1,\ldots, \beta_t)$, define 
$$\rho\circledast \sigma$$ 
to be the unique permutation in $S(\gamma_1,\ldots, \gamma_t)$ whose leftmost $|\alpha|$ entries are order-isomorphic to $\rho$ and whose rightmost $|\beta|$ entries are order-isomorphic to $\sigma$.  
\end{defn}

A pictorial representation of this construction when $t=3$ is given in Figure~\ref{fig:circledast}. In particular, if $\alpha = (2,2,0)$, $\beta = (3,1,2)$, and we take $1342\in S(\alpha)$ and $561423\in S(\beta)$, then
$$1\,3\,4\,2 \circledast 5\,6\,1\,4\,2\,3 = 1\,6\,7\,2\,9\,10\,3\,8\,4\,5.$$

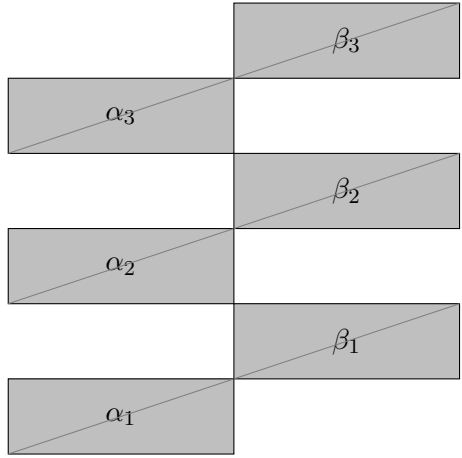
\begin{figure}[htb] \centering
\begin{tikzpicture}
\draw[fill = lightgray](0,0) rectangle (3,1) node[midway] {$\alpha_1$}; \draw[gray] (0,0)--(3,1);
\draw[fill = lightgray](0,2) rectangle (3,3) node[midway] {$\alpha_2$}; \draw[gray] (0,2)--(3,3);
\draw[fill = lightgray](0,4) rectangle (3,5) node[midway] {$\alpha_3$}; \draw[gray] (0,4)--(3,5);

\draw[fill = lightgray](3,1) rectangle (6,2) node[midway] {$\beta_1$}; \draw[gray] (3,1)--(6,2);
\draw[fill = lightgray](3,3) rectangle (6,4) node[midway] {$\beta_2$}; \draw[gray] (3,3)--(6,4);
\draw[fill = lightgray](3,5) rectangle (6,6) node[midway] {$\beta_3$}; \draw[gray] (3,5)--(6,6);
	\end{tikzpicture}
\caption{A visualization for $t=3$ of $\rho\circledast \sigma$ as given by Definition~\ref{def:circledast}. We place (maintaining their positions) the values in $\sigma$ corresponding to $\iota_{\alpha_i}$ in the box labeled $\alpha_i$, and similarly for $\rho$.}
\label{fig:circledast}
\end{figure}

An immediate consequence of this construction is that, for every fixed $k\leq n$, we have
$$S(\gamma_1,\ldots,\gamma_t) = \bigsqcup_{(\alpha,\beta)\in\compkgamma} S(\alpha_1,\ldots,\alpha_t)\circledast S(\beta_1,\ldots,\beta_t),$$
where $\compkgamma$ denotes the set of pairs of weak compositions $\alpha$ of $k$ and $\beta$ of $n-k$, each with $t$ parts, such that $\alpha+\beta= \gamma$.  

For our next lemma, recall that $c_n=(1,2,\ldots, n)$, and that  $\cV_\gamma^k$ was defined in Equation~\eqref{eq:defV}.  
\begin{lemma}\label{lem:V}
For every $1\le k\le n$,
$$\cV_\gamma^kc_n^{k} = 	
	\bigsqcup_{(\alpha,\beta)\in\compkgamma}\bigsqcup_{i\in[t]} 
S(\beta_i,\ldots, \beta_t^*,\beta_1\ldots,\beta_{i-1})
 \circledast 
S(\alpha_{i+1},\ldots, \alpha_t, \alpha_1,\ldots, \alpha_i^*).$$
\end{lemma}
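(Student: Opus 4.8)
The plan is to understand precisely what $\cV_\gamma^k c_n^k$ looks like as a set of permutations in $\symm_n$, and then match it term-by-term with the right-hand side. Recall $\cV_\gamma^k = \{\pi \in C_n S(\gamma_1,\dots,\gamma_t^*) : \pi(k) = n\}$, so an element of $\cV_\gamma^k$ has the form $c_n^{-j} \tau$ for some $\tau \in S(\gamma_1,\dots,\gamma_t^*)$ and some $j$ chosen so that the largest value sits in position $k$. Multiplying on the right by $c_n^k$ is a purely positional operation: it cyclically shifts the one-line notation by $k$ places, so that the value $n$, which sat in position $k$, moves to position $n$. Thus $\cV_\gamma^k c_n^k$ consists of all permutations obtained from some $\tau \in S(\gamma_1,\dots,\gamma_t^*)$ by first applying the content-preserving rotation $c_n^{-j}$ (which is what $C_n$ acting on the left does to values) and then cyclically shifting positions so that $n$ lands at the end.

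\textbf{Key steps.} First I would fix $\tau \in S(\gamma_1,\dots,\gamma_t^*)$, so $\tau(n) = j_t^{\text{th}}$... more precisely $\tau(n)$ is the top value of block $t$; relabel so $\tau$ has its increasing runs $\iota_{\gamma_1} \shuffle \cdots \shuffle \iota_{\gamma_t}$ with the constraint that the last letter is the maximum of the $t$-th block. Acting by $C_n$ on the left cyclically relabels the values; since the blocks of $S(\gamma)$ are consecutive integers, left-rotation by $c_n^{-i+1}$-ish amounts turns the decomposition $S(\gamma_1,\dots,\gamma_t)$ into $S(\gamma_i,\dots,\gamma_t,\gamma_1,\dots,\gamma_{i-1})$ — i.e., the cyclic rotation of the composition — because the value-interval that was block $i$ becomes the new smallest interval. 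Carefully tracking which block contains the maximum value $n$ after this relabeling, and which position it occupies, pins down the index $i \in [t]$ in the disjoint union: $i$ records which part of $\gamma$ became the ``first'' block after the value-rotation. Second, I would use the positional shift by $c_n^k$ together with the $\circledast$-decomposition: write each permutation in $\cV_\gamma^k c_n^k$ as $\rho \circledast \sigma$ where $\rho$ occupies the first $k$ positions and $\sigma$ the last $n-k$. Because the original permutation lived in some $S(\delta)$ with $\delta$ a cyclic rotation of $\gamma$, and the value $n$ now sits at position $n$ inside $\sigma$'s territory, the shape data splits: the weak composition $\alpha$ (summing to $k$) is how the $t$ value-blocks distribute among the first $k$ positions, and $\beta = \gamma - \alpha$ is the distribution among the last $n-k$; the starred blocks on the two sides (the $\beta_t^*$ in the left factor, the $\alpha_i^*$ in the right factor) encode the two boundary constraints — that the value $n$ sits at the end, and that it was the maximum of the originally-last block before rotation. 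This is exactly the indexing $(\alpha,\beta) \in \compkgamma$ together with $i \in [t]$, and the shapes $S(\beta_i,\dots,\beta_t^*,\beta_1,\dots,\beta_{i-1})$ and $S(\alpha_{i+1},\dots,\alpha_t,\alpha_1,\dots,\alpha_i^*)$ drop out.

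\textbf{Making it an equality.} To nail the bijection, I would argue both inclusions by a dimension/cardinality-free argument: given any $(\alpha,\beta) \in \compkgamma$ and $i \in [t]$ and any pair $(\rho,\sigma)$ in the indicated refined shuffle-sets, I would reconstruct a unique $\tau \in S(\gamma_1,\dots,\gamma_t^*)$ and a unique $j$ (equivalently a unique element of $\cV_\gamma^k$) mapping to $\rho \circledast \sigma$ under right multiplication by $c_n^k$; this reconstruction is just inverting the value-rotation (determined by $i$) and the position-rotation (determined by $k$), and checking that the resulting permutation indeed has $n$ in position $k$ and lies in the correct shuffle class. Conversely every element of $\cV_\gamma^k c_n^k$ arises this way, with $i$, $\alpha$, $\beta$ read off as above, so the union is genuinely disjoint (the triple $(i,\alpha,\beta)$ is determined by the permutation).

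\textbf{Main obstacle.} The delicate point — and the one I'd spend the most care on — is bookkeeping the interaction between the \emph{value}-rotation coming from left multiplication by $C_n$ and the \emph{position}-rotation by $c_n^k$, and in particular correctly locating the two ``starred'' blocks. Left multiplication by $c_n$ adds $1$ to every value mod $n$ but then the blocks of $S(\gamma)$ get \emph{recut} into new consecutive intervals, which is what produces the cyclic rotation of $\gamma$ and the index $i$; one must check this recutting is compatible with the $\circledast$ splitting induced by the later positional shift, so that the left factor really is $S(\beta_i,\dots,\beta_t^*,\beta_1,\dots,\beta_{i-1})$ with the star on $\beta_t$ (the image of the original last block) and the right factor is $S(\alpha_{i+1},\dots,\alpha_t,\alpha_1,\dots,\alpha_i^*)$ with the star on $\alpha_i$. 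I expect this to reduce to a small finite casework on where the position $k$ falls relative to the block boundaries, which I would organize by the value of $i$ and handle uniformly.
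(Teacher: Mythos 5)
Your proposal follows essentially the same route as the paper's proof: decompose $S(\gamma_1,\ldots,\gamma_t^*)$ via $\circledast$ into pairs $(\alpha,\beta)\in\compkgamma$, refine by the block $i$ containing the value in position $k$, and then apply the vertical (value) rotation followed by the horizontal shift by $c_n^k$. The only slip is in the final splitting: after right-multiplying by $c_n^k$ the original first $k$ positions land in the \emph{last} $k$ positions, so in $\cV_\gamma^k c_n^k$ the left $\circledast$-factor (first $n-k$ positions) carries the $\beta$-data and the right factor (last $k$ positions, ending in $n$) carries the $\alpha$-data, not the other way around as written; otherwise the bookkeeping you outline is exactly what the paper's argument does.
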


\begin{proof}
Observe that 
$$S(\gamma_1,\ldots, \gamma_t^*) = \bigsqcup_{(\alpha,\beta)\in\compkgamma} S(\alpha_1,\ldots, \alpha_t) \circledast S(\beta_1,\ldots,\beta_t^*).$$
Refining this set by the value in the $k$th position, we get
\begin{equation}\label{eq:decomp S(gamma)}
S(\gamma_1,\ldots, \gamma_t^*) = \bigsqcup_{(\alpha,\beta)\in\compkgamma}\bigsqcup_{i\in [t]} S(\alpha_1,\ldots,\alpha_i^*,\ldots, \alpha_t) \circledast S(\beta_1,\ldots,\beta_t^*).    
\end{equation}
To obtain our claim, vertically rotate each permutation in \eqref{eq:decomp S(gamma)} so that the value in the $k$th position is largest, and then horizontally rotate the resulting permutation $k$ positions to the left.
\end{proof}

The decomposition in the previous lemma motivates the next result.

 \begin{lemma}\label{lem:simplification}
There is a $\cDes$-preserving bijection between
\begin{equation}\label{eq:cdp1}
\bigsqcup_{i\in[t]} 
S(\beta_i,\ldots,\beta_t^*,\beta_1,\ldots, \beta_{i-1})
 \circledast 
S(\alpha_{i+1},\ldots, \alpha_t, \alpha_1,\ldots, \alpha_i^*)	
\end{equation}
and 
\begin{equation}\label{eq:cdp2}
S(\beta_t-1,\beta_{t-1},\ldots, \beta_1)
 \circledast 
S(\alpha_{t},\alpha_{t-1},\ldots,\alpha_2,(\alpha_1+1)^*).	
\end{equation}
 \end{lemma}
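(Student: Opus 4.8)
\textbf{Proof plan for Lemma~\ref{lem:simplification}.}

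The plan is to interpret both sides via the horizontal decomposition $\circledast$ and to see the index $i$ in \eqref{eq:cdp1} as recording the identity of the ``block containing position~$1$'' after a cyclic shift, so that summing over $i$ amounts to undoing that shift. More precisely, fix a pair $(\alpha,\beta)\in\compkgamma$. A permutation $\pi$ in the $i$th summand of \eqref{eq:cdp1} is obtained by taking $\rho\in S(\beta_i,\dots,\beta_t^*,\beta_1,\dots,\beta_{i-1})$ and $\sigma\in S(\alpha_{i+1},\dots,\alpha_t,\alpha_1,\dots,\alpha_i^*)$ and forming $\rho\circledast\sigma$. The key observation is that the starred part $\beta_t^*$ in the $\rho$-factor and the starred part $\alpha_i^*$ in the $\sigma$-factor, together with the cyclic reorderings of the remaining parts, are precisely the data produced when one vertically rotates a fixed permutation of $S(\beta_1,\dots,\beta_t)\circledast S(\alpha_1,\dots,\alpha_t^*)$ so that its largest entry sits in a prescribed position, then rotates back; this is exactly the mechanism already used in the proof of Lemma~\ref{lem:V}. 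So the disjoint union over $i$ in \eqref{eq:cdp1} should be in $\cDes$-preserving bijection with the single, ``unrotated'' set
\[
S(\beta_1,\dots,\beta_t^*)\circledast S(\alpha_1,\dots,\alpha_t,\text{(one extra letter)}),
\]
where the extra letter accounts for the position that was distinguished by the star.

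Concretely, I would proceed in three steps. First, unify the stars: in each summand of \eqref{eq:cdp1}, the permutation $\rho\circledast\sigma$ has its ``$n$-th entry of the $\rho$-block'' and its ``$n$-th entry of the $\sigma$-block'' pinned down, and after applying the $\circledast$ construction these two constraints interact only through the single boundary between the $\rho$-part and the $\sigma$-part; I would show that reading off which cyclic class of blocks contains the overall first position recovers $i$, so that the map $\pi\mapsto(i,\rho,\sigma)$ is a bijection onto the explicit union and, conversely, forgetting $i$ and ``straightening'' the cyclic order of the $\beta$-blocks and of the $\alpha$-blocks (a purely relabeling operation that does not move any entry, hence preserves $\Des$ and therefore $\cDes$) identifies \eqref{eq:cdp1} with a set of the form $S(\beta_t-1,\beta_{t-1},\dots,\beta_1)\circledast S(\alpha_t,\dots,\alpha_2,?)$. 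Second, I would pin down the sizes of the two extremal parts: the ``$-1$'' on $\beta_t$ and the ``$+1$'' on $\alpha_1$ come from transferring one unit across the $\rho/\sigma$ boundary, namely the distinguished letter $j_t = n$ (resp.\ the letter that plays the role of the largest element in the $\alpha$-block), which in the straightened picture lands at the end of the last $\alpha$-block and at the start of the first $\beta$-block; careful bookkeeping shows that the part $\beta_t$ loses exactly one cell and the part $\alpha_1$ gains exactly one, with the remaining parts reversed in order, giving precisely \eqref{eq:cdp2}. Third, I would verify $\cDes$-preservation: since every move in the construction is either an application of the $\Des$-preserving shuffle map $\varphi$ of Lemma~\ref{lem:shuffleab} (extended to $t$ parts), a $\circledast$-assembly (which preserves descents within each block and at the block boundary by Definition~\ref{def:circledast}), or a relabeling that fixes all entries, the ordinary descent set $\Des$ is preserved throughout; cyclic descents then agree because $\cDes$ is determined by $\Des$ together with the comparison of the first and last entries, and the assembly keeps the global first and last entries in corresponding positions.

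The main obstacle I expect is the second step: correctly tracking how the two starred parts — one in the $\beta$-factor, one in the $\alpha$-factor — reconcile into the single modified pair $(\beta_t-1,\;(\alpha_1+1)^*)$ after the cyclic reindexing. The subtlety is that $i$ simultaneously governs the cyclic rotation of the $\beta$-parts (starting from $\beta_i$) and of the $\alpha$-parts (starting from $\alpha_{i+1}$), so the star on $\beta_t$ and the star on $\alpha_i$ are ``one block apart'' in a way that exactly telescopes when one sums over $i$; making this precise requires a clean indexing convention and a short case analysis according to whether the distinguished entry $n$ lies in the $\alpha$-block or the $\beta$-block of the final position. Everything else — the shuffle bijection, the behavior of $\circledast$, and the reduction of $\cDes$ to $\Des$ plus endpoint data — is already available from the earlier lemmas, so the proof should be a matter of assembling these pieces with care about the bookkeeping.
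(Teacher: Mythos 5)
Your plan follows essentially the same route as the paper: reduce $\cDes$ to $\Des$ by exploiting that every permutation in both sets ends in $n$, reorder the parts with the shuffle bijection of Lemma~\ref{lem:shuffleab}, transfer one unit across the left/right boundary to produce the $\beta_t-1$ and $\alpha_1+1$, and let the disjoint union over $i$ collapse via the identity $\bigsqcup_{i\in[t]}S(\gamma_1,\ldots,{}_{*}\gamma_i,\ldots,\gamma_t)=S(\gamma)$. Two caveats. First, your parenthetical claim that straightening the cyclic order of the blocks is ``a purely relabeling operation that does not move any entry, hence preserves $\Des$'' is false as stated: $S(\gamma_1,\ldots,\gamma_t)$ and a reordering of its parts are genuinely different sets of permutations (e.g.\ $231\in S(1,2)$ but $231\notin S(2,1)$), and the identification requires the nontrivial $\Des$-preserving map $\varphi$ of Lemma~\ref{lem:shuffleab} applied so as to fix the pinned (starred) entry; you do invoke $\varphi$ elsewhere, but this sentence would need to be corrected. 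Second, the step you yourself flag as the main obstacle --- reconciling the star on $\beta_t^*$ in the left factor with the star on $\alpha_i^*$ in the right factor into the single pair $(\beta_t-1,(\alpha_1+1)^*)$ --- is exactly where the paper's proof does its work: after deleting the common final letter $n$, it splits into the cases $i=1$ and $i\neq 1$, converts the end-pinned condition $(\beta_i+1)^*$ on the left factor into a start-pinned condition ${}_{*}\alpha_i$ on the right factor, and only then sums over $i$. Your outline correctly identifies this mechanism but does not carry it out, so as written it is a plan for the paper's proof rather than a complete proof.
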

\begin{proof}
Since all the permutations $\pi$ in the sets \eqref{eq:cdp1} and \eqref{eq:cdp2} are such that $\pi_n=n$, it suffices to prove the existence of a $\Des$-preserving bijection between 
\[
\cU:=\bigsqcup_{i\in [t]} 
S(\beta_i,\ldots,\beta_t^*,\beta_1,\ldots, \beta_{i-1})
 \circledast 
S(\alpha_{i+1},\ldots, \alpha_t, \alpha_1,\ldots, \alpha_i-1)
\]
and 
\[
\cW:=S(\beta_t-1,\beta_{t-1},\ldots, \beta_1)
 \circledast 
S(\alpha_{t},\ldots,\alpha_1),
\]
i.e., the sets obtained from (\ref{eq:cdp1}) and (\ref{eq:cdp2}) by deleting this largest final value.  
We use the notation $\mathcal{A}\sim\mathcal{B}$ to denote that there exists a $\Des$-preserving bijection between the sets $\mathcal{A}$ and $\mathcal{B}$. To show that $\cU\sim\cW$, we will construct a sequence of $\Des$-preserving bijections.

Using Lemma~\ref{lem:shuffleab} repeatedly,  we can reorder $\beta_1,\ldots, \beta_{i-1}$ and, separately, reorder $\beta_i, \ldots, \beta_t^*$ to obtain
\begin{equation}\label{eq:cdp5}
S(\beta_i,\ldots,\beta_t^*, \beta_1, \ldots,  \beta_{i-1})
\sim 
S(\beta_t-1,\beta_{t-1},\ldots,(\beta_i+1)^*,  \ldots,  \beta_1).	
\end{equation}
As the permutations on both sides of Equation~\eqref{eq:cdp5} all have the same value $\beta_i +\cdots +\beta_t$ in the last position, it follows that 
$$\cU \sim \bigsqcup_{i\in [t]} 
S(\beta_t-1,\beta_{t-1},\ldots,(\beta_i+1)^*,  \ldots,  \beta_1)
 \circledast 
S(\alpha_{i+1},\ldots, \alpha_t, \alpha_1,\ldots, \alpha_i-1).$$
When $i =1$, the set inside the disjoint union becomes
\begin{align*}
&S(\beta_t-1,\beta_{t-1},\ldots, (\beta_1+1)^*)
 \circledast 
S(\alpha_{2},\ldots, \alpha_t, \alpha_1-1)\\
&\sim S(\beta_t-1,\beta_{t-1},\ldots, \beta_1)
 \circledast 
S(\alpha_{2},\ldots, \alpha_t, {}_{*}\alpha_1)\\
&\sim 
S(\beta_t-1,\beta_{t-1},\ldots, \beta_1)
 \circledast 
S(\alpha_t,\dots, \alpha_2, {}_{*}\alpha_1),
\end{align*}
and when $i\neq 1$, we have
\begin{align*}
&\bigsqcup_{i\neq 1} 
S(\beta_t-1,\beta_{t-1},\ldots, \beta_1)
 \circledast 
S(\alpha_{i+1},\ldots, \alpha_t, {}_{*}(\alpha_1+1),\ldots, \alpha_i-1)\\
& \sim \bigsqcup_{i\neq 1} 
S(\beta_t-1,\beta_{t-1},\ldots, \beta_1)
 \circledast 
S(\alpha_t,\ldots, {}_{*}\alpha_i,\ldots, \alpha_1),
\end{align*}
where the equivalence follows by interchanging the order of the $\beta$'s just as we did above with the $\alpha$'s.  Summing over $i\in[t]$ shows that there exists a $\Des$-preserving bijection between $\cU$ and  $\cW$, proving our lemma.
\end{proof}

Armed with these technical lemmas, we prove Lemma~\ref{lem:VH}.

\begin{proof}[Proof of Lemma~\ref{lem:VH}]
For any two sets $\cA,\cB\subseteq \fS_n$, there exists a $\cDes$-preserving bijection between $\cA$ and $\cB$ if and only if there is one between $\cA c_n^{k}$ and $\cB c_n^{k}$.  
Thus, to prove the lemma, it suffices to show that such a bijection exists between $\cH_\gamma^kc_n^k$ and $\cV_\gamma^kc_n^k$.  By the definitions involved, we have
$$\cH_\gamma^kc_n^k = S(\gamma_1,\ldots,\gamma_t^*).$$
On the other hand, by Lemmas~\ref{lem:V} and \ref{lem:simplification}, there is a $\cDes$-preserving bijection between $\cV_\gamma^kc_n^k$ and
$$\bigsqcup_{(\alpha,\beta)\in\compkgamma} S(\beta_t-1, \beta_{t-1},\ldots, \beta_1) 
 \circledast
S(\alpha_{t},\ldots, (\alpha_1+1)^*) = S(\gamma_t-1,\gamma_{t-1},\ldots,\gamma_2,  (\gamma_1+1)^*).$$
Applying Lemma~\ref{lem:shuffleab} to the right-hand side and noting that all permutations end with the largest letter $n$, we get a $\cDes$-preserving bijection between this set and $S(\gamma_1,\ldots, \gamma_t^*)$.  This completes our proof. 
\end{proof}

\subsection{Consequences of Theorem~\ref{thm:equid-main1}}\label{sec:applications_thm:equid-main}
The following is one of the main applications of the theorem. 

\begin{theorem}\label{thm:vertical-main}
 For every $J\subseteq [n-2]$, the vertically rotated inverse descent class $C_n D_{n-1,J}^{-1}\subseteq \symm_n$ is cSp.
\end{theorem}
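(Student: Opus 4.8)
The plan is to combine the two main structural results already assembled in the excerpt. By Theorem~\ref{thm:cSp}, it suffices to show that $C_n D_{n-1,J}^{-1}$ is (i) Schur-positive and (ii) $\cDes$-invariant. The set is manifestly invariant under left multiplication by $c_n$, but left-invariance is not what Definition~\ref{def:cyc-invariant} requires; the delicate point is producing a bijection $\psi$ with $\cDes(\psi\pi)=1+\cDes(\pi)$, and for that I would route through the horizontal rotation closure, which \emph{is} invariant under (right) horizontal rotation.

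First I would observe that $D_{n-1,J}^{-1}C_n$, the horizontal rotation closure of the inverse descent class, is cSp: the descent class $D_{n-1,J}$ is a union of Knuth classes (inverse descent classes are Schur-positive by the classical result recorded in Table~\ref{tab:fine_list}), hence $D_{n-1,J}^{-1}\subseteq\symm_{n-1}$ is Schur-positive, so by Theorem~\ref{thm:horizontal-main1} the set $D_{n-1,J}^{-1}C_n\subseteq\symm_n$ is cSp. In particular, by Theorem~\ref{thm:cSp}, $\sum_{\pi\in D_{n-1,J}^{-1}C_n}\bx^{\cDes(\pi)}$ expands nonnegatively in the polynomials $\sum_{T\in\SYT(\lambda/\mu)}\bx^{\cDes(T)}$ over non-ribbon skew shapes. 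Next I would invoke Theorem~\ref{thm:equid-main1} (with the variable $t$ simply set to $1$, discarding the $t^{\pi^{-1}(n)}$ bookkeeping): it gives
\[
\sum_{\pi\in C_n D_{n-1,J}^{-1}}\bx^{\cDes(\pi)}=\sum_{\pi\in D_{n-1,J}^{-1}C_n}\bx^{\cDes(\pi)}.
\]
Combining the two displays, $\sum_{\pi\in C_n D_{n-1,J}^{-1}}\bx^{\cDes(\pi)}$ equals a nonnegative integer combination $\sum_{\lambda/\mu}m_{\lambda/\mu}\sum_{T\in\SYT(\lambda/\mu)}\bx^{\cDes(T)}$ over non-ribbon skew shapes, which is exactly the defining condition~\eqref{eq:cSp} for $C_n D_{n-1,J}^{-1}$ to be cSp.

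I expect the only real subtlety to be the reduction from the two-variable equidistribution of Theorem~\ref{thm:equid-main1} to the plain $\cDes$-equidistribution: one must check that specializing $t=1$ is legitimate, i.e.\ that both sides are genuine polynomial identities in $\bx$ after summing the $t$-powers, which is immediate since setting $t=1$ in a polynomial identity preserves it. (Alternatively, and perhaps more cleanly, one can note that Theorem~\ref{thm:equid-main1}, being finer, already contains the needed statement coefficientwise.) Everything else is a direct citation: Schur-positivity of $D_{n-1,J}^{-1}$ from the Knuth-class entry of Table~\ref{tab:fine_list}, cSp-ness of the horizontal closure from Theorem~\ref{thm:horizontal-main1}, and the final packaging via Theorem~\ref{thm:cSp} and Definition~\ref{def:cSp}. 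There is no heavy lifting left here; the structural work was all done in establishing Theorems~\ref{thm:cSp}, \ref{thm:horizontal-main1}, and \ref{thm:equid-main1}.
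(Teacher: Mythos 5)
Your proposal is correct and follows essentially the same route as the paper: Schur-positivity of inverse descent classes plus Theorem~\ref{thm:horizontal-main1} gives that $D_{n-1,J}^{-1}C_n$ is cSp, and Theorem~\ref{thm:equid-main1} (specialized at $t=1$) transfers the $\cDes$ distribution, hence the cSp expansion, to $C_n D_{n-1,J}^{-1}$. The only cosmetic quibble is that your final step uses Definition~\ref{def:cSp} directly rather than the Schur-positive-plus-$\cDes$-invariant characterization announced in your opening paragraph, but that is exactly what the paper does too.
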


\begin{proof}
As shown in~\cite{Gessel} (see also Table~\ref{tab:fine_list}), inverse descent classes are Schur-positive. Hence, by Theorem~\ref{thm:horizontal-main1}, for every
$J\subseteq [n-2]$, the horizontally rotated inverse descent class $D_{n-1,J}^{-1} C_n$
is cSp.
Finally, by Theorem~\ref{thm:equid-main1}, the distribution  of $\cDes$ on 
 $C_n D_{n-1,J}^{-1}$ is the same as on $D_{n-1,J}^{-1} C_n$, completing the proof.   
\end{proof}

Next we apply Theorem~\ref{thm:vertical-main} 
to give a self-contained proof of Corollary~\ref{cor:cdes}, which states that the set of permutations with a given inverse cyclic descent number is cSp.

\begin{proof}[Second proof of Corollary~\ref{cor:cdes}]
        Note that 
        \[
        \{\pi\in \symm_n:\ \cdes(\pi)=k\}=\bigsqcup_{J\subseteq [n-2]\atop |J|=k-1} D_{n-1,J}C_n.
        \]
        Thus
        \[
        C_{n,k}=\{\pi\in \symm_n:\ \cdes(\pi^{-1})=k\}=\bigsqcup_{J\subseteq [n-2]\atop |J|=k-1} C_n D_{n-1,J}^{-1}.
        \]
        Theorem~\ref{thm:vertical-main} completes the proof.
\end{proof}

The set $C_{n,k}$ had been shown to be Schur-positive in~\cite[Corollary 7.7]{ERSchur}. 
A significantly stronger version of this result is conjectured in~\cite[Conjecture 
7.1]{AGRR}. Originally formulated in the language of cyclic quasi-symmetric functions, cyclic compositions and cyclic permutations, this conjecture equivalent to the statement that, for every $\emptyset \subsetneq J \subsetneq [n]$, the set  of permutations in $\symm_n$ whose inverse has cyclic descent set of the form $i+J$ for some $i$ (with addition modulo $n$ as usual) is Schur-positive. 
The following consequence of Theorem~\ref{thm:vertical-main} provides an affirmative solution to this conjecture.

\begin{corollary}\label{conj1} 
	For every $\emptyset \subsetneq J \subsetneq [n]$, the set 
	\begin{equation}\label{eq:iplusJ}
	\{\pi \in \symm_n:\ \cDes(\pi^{-1})=i+J \textrm{ for some }i\}
	\end{equation}
	is cSp.
	In particular, it is Schur-positive.
\end{corollary}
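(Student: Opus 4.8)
The plan is to deduce Corollary~\ref{conj1} from Theorem~\ref{thm:vertical-main} by identifying the set in \eqref{eq:iplusJ} with a union of vertically rotated inverse descent classes. The key observation is that rotating the cyclic descent set of $\pi^{-1}$ corresponds to a cyclic rotation of $\pi$ itself: concretely, $\cDes((c_n\pi)^{-1}) = 1 + \cDes(\pi^{-1})$, since conjugating a permutation by the cycle $c_n$ shifts all positions and values by $1$ modulo $n$, and the cyclic descent set is defined cyclically. Thus the set in \eqref{eq:iplusJ} is invariant under left multiplication by $C_n$, i.e.\ it equals $C_n$ times the subset of its members whose inverse has cyclic descent set exactly equal to $J$ (for any fixed choice of representative of the orbit $\{i+J\}$).

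First I would make this precise. Fix $\emptyset \subsetneq J \subsetneq [n]$ and let $W = \{\pi \in \symm_n : \cDes(\pi^{-1}) = i+J \text{ for some } i\}$. I would verify the identity $\cDes((c_n\pi)^{-1}) = 1 + \cDes(\pi^{-1})$ directly from Eq.~\eqref{eq:cellini}: writing $\sigma = \pi^{-1}$, one has $(c_n\pi)^{-1} = \pi^{-1}c_n^{-1} = \sigma c_n^{-1}$, and $\sigma c_n^{-1}(j) = \sigma(j-1)$, so position $j$ of $\sigma c_n^{-1}$ is a cyclic descent iff position $j-1$ of $\sigma$ is; hence $\cDes(\sigma c_n^{-1}) = 1 + \cDes(\sigma)$. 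This shows $W$ is a union of left $C_n$-cosets. Next, among the cyclic rotations $i+J$ of $J$, at most one can avoid containing $n$ while being contained in $[n-1]$ after a shift — more carefully, I would pick the rotation $J'$ of $J$ with $n \notin J'$; such a rotation exists because $J \subsetneq [n]$, and then $J' \subseteq [n-1]$. Then $W = C_n \cdot \{\pi : \cDes(\pi^{-1}) = J'\}$.

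Now I would relate $\{\pi : \cDes(\pi^{-1}) = J'\}$ to ordinary descent classes. Since $\cDes(\sigma) \cap [n-1] = \Des(\sigma)$ and $n \notin J'$, the condition $\cDes(\sigma) = J'$ is equivalent to $\Des(\sigma) = J'$ together with $n \notin \cDes(\sigma)$; but $n \in \cDes(\sigma)$ iff $\sigma(n) > \sigma(1)$. Partitioning by the value of $\sigma(n)$ or by a finer invariant, the cleanest route is: $\{\sigma : \cDes(\sigma) = J'\}$ is a subset of $\{\sigma : \Des(\sigma) = J'\} = D_{n-1,J'}$ (viewing $\sigma$ as an element of $\symm_n$ — but we need $\sigma(n)$ free, so really $\{\sigma \in \symm_n : \Des(\sigma) = J'\}$), cut out by the extra inequality. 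Rather than split it, I would argue that $W$ is a union over the full orbit: as $i$ ranges over $0,\dots,n-1$, the sets $\{\pi : \cDes(\pi^{-1}) = i+J\}$ for distinct cosets $i+J$ are disjoint, and each equals $c_n^i \cdot \{\pi : \cDes(\pi^{-1}) = J\}$. Meanwhile $\{\pi : \cDes(\pi^{-1}) = J\}$ with $n \notin J$ equals $\bigsqcup_{\sigma(n)} \{\pi \in \symm_n : \Des(\pi^{-1}) = J, \ \pi^{-1}(n)\cdot(\dots)\}$ — this is exactly a sub-piece of the descent class, and summing the rotations reassembles a vertically rotated inverse descent class of the form appearing in Theorem~\ref{thm:vertical-main}, namely $C_n D_{n-1,J}^{-1}$ when $J \subseteq [n-2]$, with a minor adjustment when $n-1 \in J$. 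In that boundary case I would instead use a rotation $J'$ with $n-1 \notin J'$ as well, which is possible since $|J| \le n-1$ forces some rotation to miss any prescribed element; if $|J| = n-1$ then $J$'s complement is a single point $\{p\}$ and the set $W$ is a single $C_n$-orbit equal to $C_n D_{n-1, [n-2]}^{-1}$ up to relabeling, handled the same way.

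The main obstacle I expect is the bookkeeping in the last step: carefully showing that, after choosing the right rotation representative $J'$ of the orbit of $J$, the set $W$ decomposes exactly as $C_n D_{n-1,J''}^{-1}$ for a suitable $J'' \subseteq [n-2]$ (so that Theorem~\ref{thm:vertical-main} applies verbatim), keeping track of the position of $n$ and the cyclic-versus-linear descent at position $n$. Once that identification is established, cyclic Schur-positivity of $W$ is immediate from Theorem~\ref{thm:vertical-main}, and Schur-positivity follows from Theorem~\ref{thm:cSp} (every cSp set is Schur-positive), giving the ``in particular'' clause. I would close by remarking that this settles \cite[Conjecture 7.1]{AGRR} in its equivalent permutation-statistics formulation.
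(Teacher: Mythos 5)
Your opening reduction is sound and matches the paper's strategy: the set $W$ in \eqref{eq:iplusJ} is invariant under left multiplication by $C_n$ because $\cDes((c_n\pi)^{-1})=\cDes(\pi^{-1}c_n^{-1})=1+\cDes(\pi^{-1})$, so $W=C_n\cdot\{\pi:\cDes(\pi^{-1})=J'\}$ for any representative $J'$ of the orbit, and the goal is to match this against Theorem~\ref{thm:vertical-main}. The gap is in the final identification. First, there is a cardinality mismatch: every $\pi\in C_nD_{n-1,K}^{-1}$ has $|\cDes(\pi^{-1})|=|K|+1$, since $\tau\in D_{n-1,K}$ (viewed in $\symm_n$ with $\tau(n)=n$) has $\cDes(\tau)=K\sqcup\{n\}$; so $W$ can never equal $C_nD_{n-1,J''}^{-1}$ with $|J''|=|J|$. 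The correct dictionary is that one element of $J$ must be ``absorbed'': for each $j\in J$ with $j-1\notin J$, the rotation $-j+J$ contains $n$ but not $n-1$, and the relevant descent class is $D_{n-1,K_j}$ with $K_j=(-j+J)\setminus\{n\}=-j+(J\setminus\{j\})$, of size $|J|-1$. Second, and more seriously, a \emph{single} such class does not suffice: there is one $K_j$ for each maximal cyclic run of $J$ (up to the rotational symmetry of $J$), and $W$ is the disjoint union of the corresponding sets $C_nD_{n-1,K_j}^{-1}$. The paper's own example with $n=5$ and $J=\{1,2,4\}$ is a counterexample to your claim: there $W=C_5D_{4,\{1,3\}}^{-1}\sqcup C_5D_{4,\{2,3\}}^{-1}$, two distinct classes, and no single $J''\subseteq[3]$ works.

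The underlying reason your ``cut out by $\sigma(n)<\sigma(1)$'' step cannot be patched into a single class is that recovering a genuine descent class from $\{\sigma:\cDes(\sigma)=J'\}$ requires rotating each $\sigma$ until the \emph{value} $n$ sits in position $n$; the position of $n$ is some $j\in J'$ with $j-1\notin J'$, and different $\sigma$'s land in different classes $D_{n-1,K_j}$ according to which run-end of $J'$ that position is. This is precisely the bookkeeping the paper carries out (defining $\Delta_j:=D_{n-1,-j+(J\setminus\{j\})}$, writing $A^{-1}=\bigcup_{j\in J,\,j-1\notin J}\Delta_jC_n$, and checking the pieces are pairwise equal or disjoint). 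The fix to your argument is therefore not hard --- replace the single class by this disjoint union and use that a disjoint union of cSp sets is cSp --- but as written the decomposition you assert is false, and it is the main content of the proof rather than a minor adjustment.
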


\begin{proof}
For $j\in J$, let $\Delta_j:=D_{n-1,-j+(J\setminus\{j\})}$, viewed as a subset of $\symm_n$, and 
let $A$ denote the set in Equation~\eqref{eq:iplusJ}.

Given $\pi\in\symm_n$, we have that $\cDes(\pi)=J$ if and only if there exists $j\in J$ such that $j-1\notin J$ and $\pi=\sigma c_n^j$ for some $\sigma\in \Delta_j$. Indeed, the `if' direction is clear, and the other is obtained by taking $j=\pi^{-1}(n)$.
It follows that $\cDes(\pi)=i+J$ if and only if there exists $j\in J$ such that $j-1\notin J$ and $\pi=\sigma c_n^{j+i}$ for some $\sigma\in \Delta_j$. Letting $i$ vary, we get
$$A^{-1}=\bigcup_{j\in J,\atop j-1\notin J}\Delta_j C_n.$$

Any two sets in this union are either equal or disjoint. Specifically, 
$\Delta_{j_1} C_n\cap\Delta_{j_2} C_n=\emptyset$ unless $-j_1+J=-j_2+J$, in which case $\Delta_{j_1} C_n=\Delta_{j_2} C_n$. It follows that we can express $A^{-1}$ as a disjoint union of sets $\Delta_j C_n$ for $j$ in some subset $J'\subseteq J$. Taking inverses, we get 
$$A=\bigsqcup_{j\in J'}C_n \Delta_j^{-1},$$
which is cSp by	Theorem~\ref{thm:vertical-main}, and thus Schur-positive by Theorem~\ref{thm:cSp}.
\end{proof}

\begin{example}
\begin{itemize}
\item 
Let $n=4$ and $J=\{2,4\}$. Then $-2+J\setminus \{2\}=-4+J\setminus\{4\}=\{2\}$, so
$$\{\pi \in \symm_4: \cDes(\pi^{-1})=\{2,4\} \ \text{or}\ \{1,3\} \}=C_4 D_{3,\{2\}}^{-1}.$$ 

\item 
Let $n=6$ and $J=\{1,3,5\}$. Then $-1+J\setminus\{1\}=-3+J\setminus\{3\}=-5+J\setminus\{5\}=\{2,4\}$, so 
$$\{\pi \in \symm_6: \cDes(\pi^{-1})=\{1,3,5\}\ \text{or}\ \{2,4,6\} \}=C_6 D_{5,\{2,4\}}^{-1}.$$

\item 
Let $n=5$ and $J=\{1,2,4\}$. Then $-1+J\setminus\{1\}=\{1,3\}$ and $-4+J\setminus\{4\}=\{2,3\}$, so
		$$\{\pi \in \symm_5:\ \cDes(\pi^{-1})=i+\{1,2,4\} \text{ for some }i\}
		=C_5 D^{-1}_{4, \{1,3\}}\sqcup C_5 D^{-1}_{4, \{2,3\}}.$$ 
\end{itemize}
\end{example}

\medskip

Our last application of Theorem~\ref{thm:equid-main1} 
  is an affirmative solution to \cite[Conjecture 10.2]{ERSchur}. 

\begin{corollary}[{\cite[Conjecture 10.2]{ERSchur}}]\label{conj:CD}
\begin{itemize}
    \item[1.]  
    For every  $J\subseteq [n-2]$, the distribution of $\Des$ over the 
	vertically rotated inverse descent class $C_n D_{n-1,J}^{-1}$ is
	the same as over $D_{n-1,J}^{-1} C_n$.
	\item[2.] The set $C_n D_{n-1,J}^{-1}$ is Schur-positive.
\end{itemize}
\end{corollary}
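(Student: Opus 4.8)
The plan is to deduce Corollary~\ref{conj:CD} directly from the results already established, since both of its two parts are immediate weakenings of statements we have in hand. First I would prove part~(1): by Theorem~\ref{thm:equid-main1}, we have the identity
\[
\sum\limits_{\pi\in C_n D_{n-1,J}^{-1}}{\bf x}^{\cDes(\pi)}t^{\pi^{-1}(n)} =
\sum\limits_{\pi\in D_{n-1,J}^{-1}C_n}{\bf x}^{\cDes(\pi)}t^{\pi^{-1}(n)}.
\]
Setting $t=1$ gives equality of the generating functions $\sum_\pi {\bf x}^{\cDes(\pi)}$ over the two sets, which means $\cDes$ is equidistributed over $C_n D_{n-1,J}^{-1}$ and $D_{n-1,J}^{-1} C_n$. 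To pass from $\cDes$ to the ordinary descent set $\Des$, I would simply specialize $x_n=1$ in this equality of polynomials: since $\Des(\pi)=\cDes(\pi)\cap[n-1]$, setting $x_n=1$ in ${\bf x}^{\cDes(\pi)}$ yields ${\bf x}^{\Des(\pi)}$, so we obtain
\[
\sum\limits_{\pi\in C_n D_{n-1,J}^{-1}}{\bf x}^{\Des(\pi)} =
\sum\limits_{\pi\in D_{n-1,J}^{-1}C_n}{\bf x}^{\Des(\pi)},
\]
which is precisely the claimed equidistribution of $\Des$.

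For part~(2), I would invoke Theorem~\ref{thm:vertical-main}, which already states that $C_n D_{n-1,J}^{-1}$ is cSp; by Theorem~\ref{thm:cSp} (or Remark~\ref{rem:most}), every cSp set is Schur-positive, so $C_n D_{n-1,J}^{-1}$ is Schur-positive. Alternatively, and perhaps more in the spirit of keeping the argument self-contained within this section, one can argue directly: the equidistribution of $\Des$ from part~(1) shows that $\Q(C_n D_{n-1,J}^{-1}) = \Q(D_{n-1,J}^{-1} C_n)$ as quasi-symmetric functions, and $D_{n-1,J}^{-1} C_n$ is Schur-positive by Theorem~\ref{thm:horizontal-main1} (the horizontal rotation closure of the Schur-positive set $D_{n-1,J}^{-1}$), so $C_n D_{n-1,J}^{-1}$ is Schur-positive as well.

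There is no real obstacle here; the entire content of the corollary has been front-loaded into Theorem~\ref{thm:equid-main1} and Theorem~\ref{thm:vertical-main}, and Corollary~\ref{conj:CD} is just the ``shadow'' of those results obtained by forgetting the cyclic structure (the $x_n=1$ and $t=1$ specializations). The only point that warrants a sentence of care is the observation that setting $x_n=1$ in a monomial indexed by a subset of $[n]$ correctly produces the monomial indexed by its intersection with $[n-1]$, which is what links $\cDes$-equidistribution to $\Des$-equidistribution; everything else is a direct citation. I would write the proof as two short paragraphs, one per item, each essentially a pointer to the relevant earlier theorem together with the appropriate specialization.
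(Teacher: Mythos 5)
Your proof is correct and follows the same route as the paper: part~(1) is obtained by setting $x_n=t=1$ in Theorem~\ref{thm:equid-main1}, and part~(2) by citing Theorem~\ref{thm:vertical-main} together with Theorem~\ref{thm:cSp}. The extra remark about why the $x_n=1$ specialization converts $\cDes$ into $\Des$ is a fine clarification but not a deviation from the paper's argument.
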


\begin{proof}
To prove part 1, let $x_n=t=1$ in  Theorem~\ref{thm:equid-main1}.
For part 2, we know that $C_n D_{n-1,J}^{-1}$ is cSp by Theorem~\ref{thm:vertical-main}, and thus it is Schur-positive by Theorem~\ref{thm:cSp}.
\end{proof}

\section{An explicit bijection for shuffles of two increasing sequences} \label{sec:singletons}

In this section and the next one we describe explicit bijections proving Theorem~\ref{thm:equid-main1} in two special cases.
This section deals with the case that $|J|=1$, and 
Section~\ref{sec:arc} will deal with the case $J=[i]$.

In the rest of this section, suppose that $J$ contains one element, that is, $J=\{j\}$. In this case, $D_{n-1,J}^{-1}$ consists of shuffles of two increasing sequences of fixed length. Specifically, 
$D_{n-1,J}^{-1}=S(j,n-j^*)\setminus\{12\dots n\}$.
We give an explicit bijection $$\bij:C_nD_{n-1,\{j\}}^{-1}\to D_{n-1,\{j\}}^{-1}C_n$$ that preserves $\cDes$ and the position of $n$.
This bijection is obtained by analyzing the proof of Theorem~\ref{thm:equid-main1}, and using the map $\varphi$ in the 
proof of Lemma~\ref{lem:shuffleab} to make the bijection explicit.

For a word $w$ over the alphabet $\{1,2\}$, recall from the proof of Lemma~\ref{lem:shuffleab} that 
$f(w)$ is obtained by keeping the descents $21$ untouched, and changing each consecutive block of letters of $w$ that is not part of a $21$, which must be of the form $1^r 2^s$, into $1^s 2^r$. Additionally, for $p<q$, define $f_{[p,q]}(w)$ to be the word obtained by applying the above operation $f$ to the factor $w_{p}w_{p+1}\dots w_q$, and  keeping the other entries of $w$ unchanged.

Next we describe how to obtain $\bij(\pi)$ for given $\pi\in C_n D_{n-1,\{j\}}^{-1}$.
Let $k=\pi^{-1}(n)$ be the position of $n$. If $k=n$, we simply define $\bij(\pi)=\pi$. Suppose now that $k\neq n$.
Rotate $\pi$ horizontally $k$ positions to the left, so that its rightmost entry becomes $n$. The resulting permutation $\pi c_n^k$ is a shuffle of two increasing sequences, so it can be encoded as a word $w$ over $\{1,2\}$ by placing $1$s in the positions corresponding to the lower increasing sequence, and $2$s elsewhere.
If $w_{n-k}=1$, let $w'=f_{[n-k+1,n-1]}(w)$, else (that is, if $w_{n-k}=2$) let $w'=f_{[1,n-k-1]}(w)$.
Let $w''=f_{[1,n-1]}(w')$, and let $\sigma$ be the permutation in $D_{n-1,\{j\}}^{-1}$ (viewed as a subset of $\symm_n$) encoded by $w''$.
Let $\bij(\pi)=\sigma c_n^{-k}$, that is, the permutation obtained by rotating $\sigma$ horizontally $k$ positions to the right.
A schematic description of this construction is given in Figure~\ref{fig:bij}.

\def\eps{.1}

\begin{figure}
    \centering
\begin{tikzpicture}[scale=.5]
	\draw[thin,dotted] (0,3)--(6,3);
	\draw[thin,dotted] (2,0)--(2,6);
	\draw (0,0) rectangle (6,6);
	\draw[thick] (0,0)--(2,1) node[midway,above]{$\beta_1$};
	\draw[thick] (2,1)--(6,3) node[midway,above]{$\alpha_1$};
	\draw[thick] (0,3)--(2,4) node[midway,above]{$\beta_2$};
	\draw[thick] (2,4)--(6,6) node[midway,above]{$\alpha_2$};
	\draw[fill] (6-\eps,6-\eps) circle (\eps);
	\draw (3,8) node {$D_{n-1,\{j\}}^{-1}$};
    \draw [decorate,decoration={brace,amplitude=5pt},xshift=0pt,yshift=-2pt] (2,0) -- (0,0) node [midway,below,yshift=-2pt] {$k$};

	\draw (8,3) node {$\longrightarrow$};
	\draw (8,3.5) node {vertical};
	\draw (8,2.5) node {rotation};

	\begin{scope}[shift={(10,0)}]
	\draw[thin,dotted] (0,2)--(6,2);
	\draw[thin,dotted] (0,5)--(6,5);
	\draw[thin,dotted] (2,0)--(2,6);
	\draw (0,0) rectangle (6,6);
	\draw[thick] (0,2)--(2,3) node[midway,above]{$\beta_1$};
	\draw[thick] (0,5)--(2,6) node[midway,above]{$\beta_2$};
    \draw[thick] (2,3)--(6,5) node[midway,above]{$\alpha_1$};
	\draw[thick] (2,0)--(6,2) node[midway,above]{$\alpha_2$};
    \draw[fill] (2-\eps,6-\eps) circle (\eps);
    \draw[fill] (6-\eps,2-\eps) circle (\eps);
	\draw (7,8) node {$C_nD_{n-1,\{j\}}^{-1}$ with $\pi(k)=n$};
	\draw (3,6.7) node {case 1};
	\draw (7,3) node {$\sqcup$};
	\draw [decorate,decoration={brace,amplitude=5pt},xshift=0pt,yshift=-2pt] (2,0) -- (0,0) node [midway,below,yshift=-2pt] {$k$};
	\end{scope}

\begin{scope}[shift={(18,0)}]
	\draw[thin,dotted] (0,2)--(6,2);
	\draw[thin,dotted] (0,5)--(6,5);
	\draw[thin,dotted] (2,0)--(2,6);
	\draw (0,0) rectangle (6,6);
	\draw[thick] (0,2)--(2,3) node[midway,above]{$\beta_2$};
	\draw[thick] (0,5)--(2,6) node[midway,above]{$\beta_1$};
    \draw[thick] (2,3)--(6,5) node[midway,above]{$\alpha_2$};
	\draw[thick] (2,0)--(6,2) node[midway,above]{$\alpha_1$};
    \draw[fill] (2-\eps,6-\eps) circle (\eps);
    \draw[fill] (6-\eps,5-\eps) circle (\eps);
	\draw (3,6.7) node {case 2};
	\draw [decorate,decoration={brace,amplitude=5pt},xshift=0pt,yshift=-2pt] (2,0) -- (0,0) node [midway,below,yshift=-2pt] {$k$};
	\end{scope}
\end{tikzpicture}
\medskip

\begin{tikzpicture}[scale=.5]

	\draw[thin,dotted] (0,2)--(6,2);
	\draw[thin,dotted] (0,5)--(6,5);
	\draw[thin,dotted] (2,0)--(2,6);
	\draw (0,0) rectangle (6,6);
	\draw[thick] (0,2)--(2,3) node[midway,above]{$\beta_1$};
	\draw[thick] (0,5)--(2,6) node[midway,above]{$\beta_2$};
    \draw[thick] (2,3)--(6,5) node[midway,above]{$\alpha_1$};
	\draw[thick] (2,0)--(6,2) node[midway,above]{$\alpha_2$};
    \draw[fill] (2-\eps,6-\eps) circle (\eps);
    \draw[fill] (6-\eps,2-\eps) circle (\eps);
    \draw (3,8) node {$\pi\in C_nD_{n-1,\{j\}}^{-1}$};
    \draw[->] (6,8)--(33,8)--(33,-17.7);
    \draw (31.5,8.5) node {$\bij$};
    \draw (3,6.7) node {case 1};
	\draw (7,3) node {$\stackrel{\cdot c_n^k}{\rightarrow}$};
	\draw [decorate,decoration={brace,amplitude=5pt},xshift=0pt,yshift=-2pt] (2,0) -- (0,0) node [midway,below,yshift=-2pt] {$k$};
	
\begin{scope}[shift={(8,0)}]
	\draw[thin,dotted] (0,3)--(6,3);
	\draw[thin,dotted] (4,0)--(4,6);
	\draw (0,0) rectangle (6,6);
	\draw[thick] (0,0)--(4,2) node[midway,above]{$\alpha_2$};
	\draw[thick] (4,2)--(6,3) node[midway,above]{$\beta_1$};
	\draw[thick] (0,3)--(4,5) node[midway,above]{$\alpha_1$};
	\draw[thick] (4,5)--(6,6) node[midway,above]{$\beta_2$};
	\draw[fill] (6-\eps,6-\eps) circle (\eps);
	\draw[fill] (4-\eps,2-\eps) circle (\eps);
	\draw (7.75,3) node {$\stackrel{f_{[n-k+1,n-1]}}{\rightarrow}$};
	\draw [decorate,decoration={brace,amplitude=5pt},xshift=0pt,yshift=-2pt] (6,0) -- (4,0) node [midway,below,yshift=-2pt] {$k$};
\end{scope}

\begin{scope}[shift={(17.5,0)}]
	\draw[thin,dotted] (0,3)--(6,3);
	\draw[thin,dotted] (4,0)--(4,6);
	\draw (0,0) rectangle (6,6);
	\draw[thick] (0,0)--(4,2) node[midway,above]{$\alpha_2$};
	\draw[thick] (4,2)--(6,3) node[midway,above]{$\beta_2-1$};
	\draw[thick] (0,3)--(4,5) node[midway,above]{$\alpha_1$};
	\draw[thick] (4,5)--(6,6) node[midway,above]{$\beta_1+1$};
	\draw[fill] (6-\eps,6-\eps) circle (\eps);
	\draw[fill] (4-\eps,2-\eps) circle (\eps);
	\draw (3,-1.2) node {$\sqcup$};
	\draw [decorate,decoration={brace,amplitude=5pt},xshift=0pt,yshift=-2pt] (6,0) -- (4,0) node [midway,below,yshift=-2pt] {$k$};
\end{scope}

\begin{scope}[shift={(0,-8.5)}]
	\draw[thin,dotted] (0,2)--(6,2);
	\draw[thin,dotted] (0,5)--(6,5);
	\draw[thin,dotted] (2,0)--(2,6);
	\draw (0,0) rectangle (6,6);
	\draw[thick] (0,2)--(2,3) node[midway,above]{$\beta_2$};
	\draw[thick] (0,5)--(2,6) node[midway,above]{$\beta_1$};
    \draw[thick] (2,3)--(6,5) node[midway,above]{$\alpha_2$};
	\draw[thick] (2,0)--(6,2) node[midway,above]{$\alpha_1$};
    \draw[fill] (2-\eps,6-\eps) circle (\eps);
    \draw[fill] (6-\eps,2-\eps) circle (\eps);
    \draw (3,6.7) node {case 2};
	\draw (7,3) node {$\stackrel{\cdot c_n^k}{\rightarrow}$};
	\draw [decorate,decoration={brace,amplitude=5pt},xshift=0pt,yshift=-2pt] (2,0) -- (0,0) node [midway,below,yshift=-2pt] {$k$};
	
\begin{scope}[shift={(8,0)}]
	\draw[thin,dotted] (0,3)--(6,3);
	\draw[thin,dotted] (4,0)--(4,6);
	\draw (0,0) rectangle (6,6);
	\draw[thick] (0,0)--(4,2) node[midway,above]{$\alpha_1$};
	\draw[thick] (4,2)--(6,3) node[midway,above]{$\beta_2$};
	\draw[thick] (0,3)--(4,5) node[midway,above]{$\alpha_2$};
	\draw[thick] (4,5)--(6,6) node[midway,above]{$\beta_1$};
	\draw[fill] (6-\eps,6-\eps) circle (\eps);
	\draw[fill] (4-\eps,5-\eps) circle (\eps);
	\draw (7.75,3) node {$\stackrel{f_{[1,n-k-1]}}{\rightarrow}$};
	\draw [decorate,decoration={brace,amplitude=5pt},xshift=0pt,yshift=-2pt] (6,0) -- (4,0) node [midway,below,yshift=-2pt] {$k$};
\end{scope}

\begin{scope}[shift={(17.5,0)}]
	\draw[thin,dotted] (0,3)--(6,3);
	\draw[thin,dotted] (4,0)--(4,6);
	\draw (0,0) rectangle (6,6);
	\draw[thick] (0,0)--(4,2) node[midway,above]{$\alpha_2-1$};
	\draw[thick] (4,2)--(6,3) node[midway,above]{$\beta_2$};
	\draw[thick] (0,3)--(4,5) node[midway,above]{$\alpha_1+1$};
	\draw[thick] (4,5)--(6,6) node[midway,above]{$\beta_1$};
	\draw[fill] (6-\eps,6-\eps) circle (\eps);
	\draw[fill] (4-\eps,5-\eps) circle (\eps);
	\draw (3,-1) node {$\shortparallel$};
	\draw [decorate,decoration={brace,amplitude=5pt},xshift=0pt,yshift=-2pt] (6,0) -- (4,0) node [midway,below,yshift=-2pt] {$k$};
\end{scope}
\end{scope}

\begin{scope}[shift={(17.5,-16.5)}]
	\draw[thin,dotted] (0,3)--(6,3);
	\draw (0,0) rectangle (6,6);
	\draw[thick] (0,0)--(6,3) node[midway]{$\alpha_2+\beta_2-1$};
	\draw[thick] (0,3)--(6,6) node[midway]{$\alpha_1+\beta_1+1$};
	\draw[fill] (6-\eps,6-\eps) circle (\eps);
	\draw (7.5,3) node {$\stackrel{f_{[1,n-1]}}{\rightarrow}$};

\begin{scope}[shift={(8.7,0)}]
	\draw[thin,dotted] (0,3)--(6,3);
	\draw (0,0) rectangle (6,6);
	\draw[thick] (0,0)--(6,3) node[midway]{$\alpha_1+\beta_1$};
	\draw[thick] (0,3)--(6,6) node[midway]{$\alpha_2+\beta_2$};
	\draw[fill] (6-\eps,6-\eps) circle (\eps);
	\draw (3,-1) node {${\downarrow}${\footnotesize $\cdot c_n^{-k}$}};
	\draw (4.5,-2.2) node {$\bij(\pi)\in D_{n-1,\{j\}}^{-1}C_n$};
\end{scope}
\end{scope}
\end{tikzpicture}
    \caption{A schematic description of the map $\bij$. Permutations $\pi\in C_n D_{n-1,\{j\}}^{-1}$ (obtained by vertically rotating $D_{n-1,\{j\}}^{-1}$) with $\pi(k)=n$ fall into two cases. Here $\alpha_1+\beta_1=j$, $\alpha_2+\beta_2=n-j$, and $\beta_1+\beta_2=k$.}
    \label{fig:bij}
\end{figure}
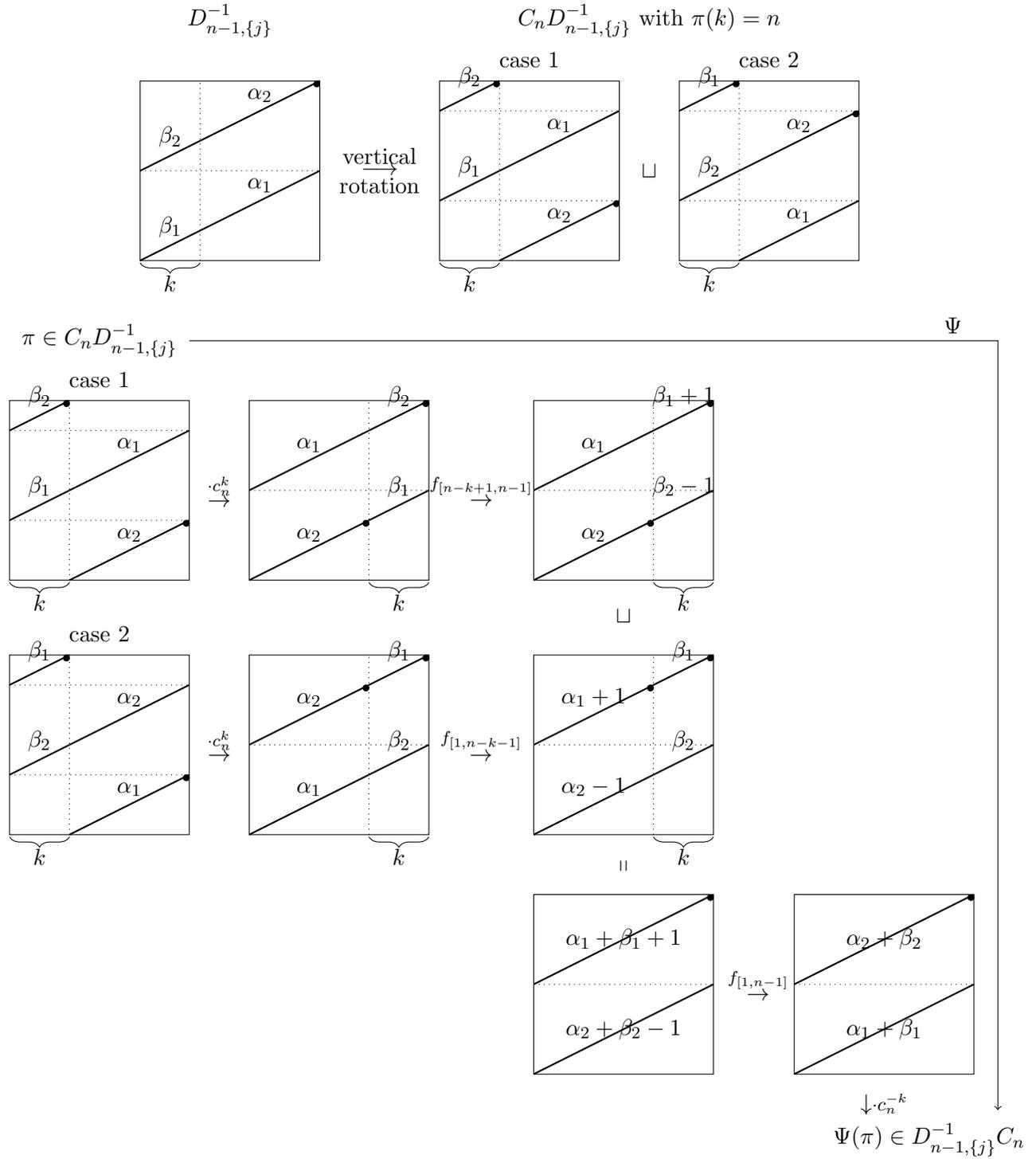

\begin{example}
Let $n=16$ and $J=\{7\}$. Let $\pi = 13\, 6\, 14\, 15\, 7\, 16\, 1\, 8\, 9\, 10\, 2\, 3\, 4\, 11\, 12\, 5\in C_n D_{n-1,J}^{-1}$, which is a vertical rotation of $8\, 1\, 9\, 10\, 2\, 11\, 12\, 3\, 4\, 5\, 13\, 14\, 15\, 6\, 7\, 16\in D_{n-1,J}^{-1}$ (viewed as a subset of $\symm_n$). Since $k=\pi^{-1}(16)=6$, we first compute $\pi c_n^6= 1\, 8\, 9\, 10\, 2\, 3\, 4\, 11\, 12\, 5\, 13\, 6\, 14\, 15\, 7\, 16$, which is encoded by the word $w=1222111221212212$. Since $w_{10}=1$, we define 
$$w'=f_{[11,15]}(1222111221\,21221\,2)= 1222111221\,21121\,2,$$
and $$w''=f_{[1,15]}(122211122121121\,2) = 112211222121221\,2.$$
This word encodes the permutation $\sigma=1\, 2\, 8\, 9\, 3\, 4\, 10\, 11\, 12\, 5\, 13\, 6\, 14\, 15\, 7\, 16$, and so
$\Psi(\pi)= \sigma c_n^{-6}= 13\, 6\, 14\, 15\, 7\, 16\, 1\, 2\, 8\, 9\, 3\, 4\, 10\, 11\, 12\, 5$.
\end{example}

\section{Arc permutations}\label{sec:arc}

In this section we give an explicit bijective proof of Theorem~\ref{thm:equid-main1} in the case $J=[i]$. 
Unlike the bijection $\bij$ from Section~\ref{sec:singletons}, this bijection does not follow from our proof of Theorem~\ref{thm:equid-main1}.
We will see that the theorem, in this special case, becomes a statement about arc permutations.

\begin{defn}
	A permutation $\pi\in \symm_n$ is an {\em arc permutation}
	if, for every $1\le j\le n$, the first $j$ letters in $\pi$ form
	an interval in $\ZZ_n$. Denote by $\A_n$ the set of arc permutations in $\symm_n$.
\end{defn}

For example, $12543\in\A_5$, but $125436\notin\A_6$, since
$\{1,2,5\}$ is an interval in $\ZZ_5$ but not in $\ZZ_6$.

Arc permutations were introduced in the study of flip graphs of polygon triangulations. It was shown in~\cite{ERarc} that they can be characterized as those avoiding the eight patterns $\sigma\in\symm_4$ with $|\sigma(1)-\sigma(2)|=2$, that is,
$$\A_n=\symm_n(1324, 1342, 2413, 2431, 3124, 3142, 4213, 4231).$$
Other combinatorial properties of these permutations, such as their descent set distribution, are studied in~\cite{ERarc}. In particular, it follows from~\cite[Theorem 5]{ERarc}
that $\A_n$ is a Schur-positive set. 
Notice that while $c_n\A_n=\A_n$ for every $n$, we have that $\A_n c_n\ne \A_n$ for $n>3$. In other words, $\A_n$ is invariant under vertical rotation but
not under horizontal rotation, and thus Theorem~\ref{cor:horizontal2} does not apply. However, we will show that it is possible to express arc permutations in terms of vertical rotations of inverse descent classes.

\subsection{Permutation classes and grids}\label{sec:grids}

\begin{defn}
	A sequence of integers $a_1,\dots,a_n$ is 
	\begin{itemize}
	\item {\em left-unimodal} if it is a union of an increasing subsequence and a decreasing subsequence, which intersect at the first letter $a_1$;	
	\item {\em right-unimodal} 
	if the sequence  $a_n,a_{n-1},\dots,a_1$ is left-unimodal.
	\end{itemize}

	\noindent
	We say that a permutation $\pi\in \symm_n$ has one of the above properties if the sequence $\pi(1), \pi(2), \dots, \pi(n)$ does.
	Denote the sets of left-unimodal and right-unimodal permutations in $\symm_n$ by $\L_n$ and ${\mathcal{R}}_n$, respectively.  Note that for every $n\ge 1$, $\pi\in \L_n$ (respectively, $\pi\in {\mathcal{R}}_n$) if and only if, for every $1\le j\le n$, the first (respectively, last) $j$ letters in $\pi$ form an interval in $\ZZ$. These sets can be described in terms of pattern avoidance as $\L_{n}=\symm_{n}(132,312)$ and ${\mathcal{R}}_{n}=\symm_{n}(231,213)$, respectively.
\end{defn}	

Arc permutations are obtained as vertical rotations of 
left-unimodal permutations, that is, $\A_n=C_n \L_{n-1}$. Using that
\begin{equation}\label{eq:L-union}
\L_{n-1}=\bigsqcup_{i=0}^{n-2}D_{n-1,[i]}^{-1},
\end{equation}
we can express $\A_n$
as a disjoint union of vertically rotated inverse descent classes:
\begin{equation}\label{eq:A-union}
\A_n=\bigsqcup_{i=0}^{n-2} C_n D_{n-1,[i]}^{-1}.
\end{equation}
The next result follows now from Theorem~\ref{thm:vertical-main}.

\begin{corollary}\label{cor:main_arc}
The set $\A_n$ is cSp.	
\end{corollary}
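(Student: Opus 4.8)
The plan is to derive Corollary~\ref{cor:main_arc} directly from the decomposition in Equation~\eqref{eq:A-union} together with Theorem~\ref{thm:vertical-main}. First I would recall that $\A_n = C_n \L_{n-1}$ and that the left-unimodal permutations decompose as a disjoint union of inverse descent classes of prefix type, namely $\L_{n-1}=\bigsqcup_{i=0}^{n-2}D_{n-1,[i]}^{-1}$ as stated in Equation~\eqref{eq:L-union}. Applying the $n$-cycle group $C_n$ on the left to this disjoint union, and noting that left multiplication by a fixed permutation is a bijection of $\symm_n$ (so it carries disjoint unions to disjoint unions), yields Equation~\eqref{eq:A-union}: $\A_n=\bigsqcup_{i=0}^{n-2} C_n D_{n-1,[i]}^{-1}$.

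Next I would invoke Theorem~\ref{thm:vertical-main}, which asserts that each vertically rotated inverse descent class $C_n D_{n-1,J}^{-1}$ is cSp; in particular this holds for $J=[i]$ with $0\le i\le n-2$. Then I need the (essentially formal) fact that a disjoint union of cSp sets is cSp. This follows from Definition~\ref{def:cSp}: if $\sum_{\pi\in A_\ell}{\bf x}^{\cDes(\pi)}=\sum_{\lambda/\mu}m^{(\ell)}_{\lambda/\mu}\sum_{T\in\SYT(\lambda/\mu)}{\bf x}^{\cDes(T)}$ for each $\ell$, then summing over $\ell$ gives the same identity for $\bigsqcup_\ell A_\ell$ with multiplicities $\sum_\ell m^{(\ell)}_{\lambda/\mu}$, which are again nonnegative integers. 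Applying this with $A_i = C_n D_{n-1,[i]}^{-1}$ and using Equation~\eqref{eq:A-union} shows that $\A_n$ is cSp.

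I do not anticipate a serious obstacle here, since the statement is a clean corollary of the machinery already in place. The only point requiring care is verifying that the sets in the union~\eqref{eq:A-union} are genuinely disjoint (so that the generating function over $\A_n$ is the sum of the generating functions over the pieces, with no overcounting); this is exactly the content of Equation~\eqref{eq:A-union}, which in turn rests on~\eqref{eq:L-union} and the injectivity of left multiplication. One could alternatively phrase the whole argument through the characterization in Theorem~\ref{thm:cSp}, checking that $\A_n$ is Schur-positive (which follows from~\cite[Theorem 5]{ERarc} or from Schur-positivity of each piece) and $\cDes$-invariant (taking $\psi\pi = c_n^{-1}\pi$, since $c_n\A_n=\A_n$ gives a bijection $\A_n\to\A_n$ with $\cDes(c_n^{-1}\pi)=1+\cDes(\pi)$); but the disjoint-union route via Theorem~\ref{thm:vertical-main} is the most economical, as it reuses results already established in this section.
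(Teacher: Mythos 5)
Your main argument is correct and is essentially the paper's own proof: the paper establishes $\A_n=\bigsqcup_{i=0}^{n-2} C_n D_{n-1,[i]}^{-1}$ (Equation~\eqref{eq:A-union}, via $\A_n=C_n\L_{n-1}$ and Equation~\eqref{eq:L-union}) and then cites Theorem~\ref{thm:vertical-main}, exactly as you do; the observation that a disjoint union of cSp sets is cSp by adding the multiplicities $m_{\lambda/\mu}$ is the implicit glue in both versions. (For the disjointness of the pieces $C_n D_{n-1,[i]}^{-1}$, the relevant point is that each $D_{n-1,[i]}^{-1}$ lies in the stabilizer of $n$, so the power of $c_n$ is recoverable from $\pi(n)$; ``injectivity of left multiplication'' by a single element is not quite enough since $C_n$ is a whole subgroup.)

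One warning about your parenthetical alternative: the claim that $\psi\pi=c_n^{-1}\pi$ satisfies $\cDes(c_n^{-1}\pi)=1+\cDes(\pi)$ is false. Vertical rotation shifts \emph{values}, not positions, and Cellini's $\cDes$ does not transform by $D\mapsto 1+D$ under it; for instance $\cDes(213)=\{1,3\}$ while $\cDes(c_3^{-1}\cdot 213)=\cDes(132)=\{2,3\}\neq 1+\{1,3\}=\{1,2\}$. The equivariance $\cDes(\pi c_n^{-1})=1+\cDes(\pi)$ holds for \emph{horizontal} rotation (Remark~\ref{rem:rotation}), but $\A_n$ is not closed under horizontal rotation for $n>3$. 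So while $\A_n$ is indeed $\cDes$-invariant (this follows a posteriori from its being cSp), the witnessing bijection is not vertical rotation, and that alternative route would need a different $\psi$. Your primary, disjoint-union argument is unaffected.
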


Next we recall the notion of geometric grid classes from~\cite{AABRV}, which will be useful for our bijection in Subsection~\ref{sec:bijection}.

\begin{defn}
A {\em geometric grid class} consists of those permutations that can be drawn on a specified set of
line segments of slope $\pm 1$, arranged according to the positions of the non-zero entries of a matrix $M$ with entries in $\{0, +1, -1\}$.
Specifically, a permutation $\pi\in\symm_n$ can be drawn on these line segments if $n$ points can be placed on them so that the $i$th point from the left is the $\pi(i)$th point from the bottom, for all $1\le i\le n$. 
\end{defn}	

\begin{example}
Left-unimodal and right-unimodal permutations are
those in the geometric grid classes of the matrices
$$
M=
  \left( {\begin{array}{c}
   +1  \\
   -1 \\
  \end{array} } \right), 
   M=
  \left( {\begin{array}{c}
   -1  \\
   +1 \\
  \end{array} } \right),$$
respectively.
A drawing of a 
left-unimodal permutation
on the corresponding grid is shown in Figure~\ref{fig:grid_staircase}. 
\end{example}

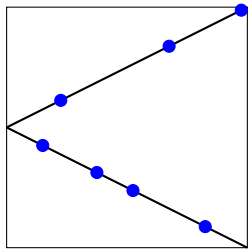
\begin{figure}[htb]
\centering
\begin{tikzpicture}[scale=0.8]
\draw (0,0) rectangle (4,4); \draw[thick] (0,2)--(4,4);
\draw[thick] (0,2)--(4,0);
\def\xa{6/10}
\def\xb{9/10}
\def\xc{15/10}
\def\xd{21/10}
\def\xe{27/10}
\def\xf{33/10}
\def\xg{39/10}
\def\ya{2-\xa/2}
\def\yc{2-\xc/2}
\def\yd{2-\xd/2}
\def\yf{2-\xf/2}
\def\yb{2+\xb/2}
\def\ye{2+\xe/2}
\def\yg{2+\xg/2}
\draw[fill,blue] (\xa,\ya) circle (0.1); \draw[fill,blue]
(\xb,\yb) circle (0.1); \draw[fill,blue] (\xc,\yc) circle (0.1);
\draw[fill,blue] (\xd,\yd) circle (0.1); \draw[fill,blue]
(\xe,\ye) circle (0.1); \draw[fill,blue] (\xf,\yf) circle (0.1);
\draw[fill,blue] (\xg,\yg) circle (0.1);
\end{tikzpicture}
\caption{A drawing of the permutation $4532617$ on the grid for left-unimodal permutations.}
\label{fig:grid_staircase}
\end{figure}

Being vertical rotations of left-unimodal permutations, arc permutations are precisely those that can be drawn on one of the grids in Figure~\ref{fig:arc}, which are obtained by vertically rotating the grid in Figure~\ref{fig:grid_staircase}. 
This pictorial grid description implies the following property.

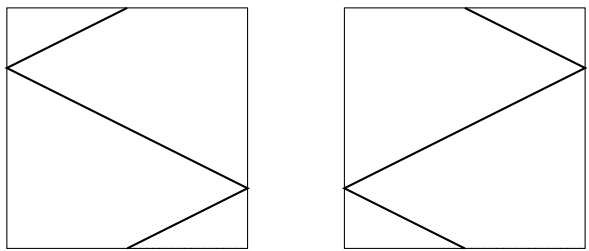
\begin{figure}[htb]
\centering
\begin{tikzpicture}[scale=0.8]
\draw (0,0) rectangle (4,4); \draw[dotted] (2,0)--(4,0);
\draw[thick] (2,0)--(4,1)--(0,3)--(2,4);
\end{tikzpicture}
\hspace{10mm}
\begin{tikzpicture}[scale=0.8]
\draw (0,0) rectangle (4,4); \draw[dotted] (2,0)--(4,0);
\draw[thick] (2,0)--(0,1)--(4,3)--(2,4);
\end{tikzpicture}
\caption{Grids for arc permutations.}\label{fig:arc}
\end{figure}

\begin{observation}\label{obs:arc}
    A permutation $\pi\in \symm_n$ with $\pi(j)=n$ is an arc permutation if and only if
    one of the following holds:
\begin{itemize}  
\item the prefix
    $\pi(1),\dots,\pi(j-1)$ is a left-unimodal sequence on $[j]$,
   and the suffix   $\pi(j+1),\dots,\pi(n)$ is a right-unimodal sequence on $[n-1]\setminus [j]$, or
\item   the prefix
    $\pi(1),\dots,\pi(j-1)$ is a left-unimodal sequence on  $[n-1]\setminus [n-j]$, and the suffix $\pi(j+1),\dots,\pi(n)$ is a right-unimodal sequence on $[n-j]$.
    \end{itemize}
\end{observation}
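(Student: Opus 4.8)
The plan is to verify Observation~\ref{obs:arc} directly from the defining property of arc permutations, using the characterization of left- and right-unimodal sequences in terms of forming intervals in $\ZZ$. Let $\pi\in\symm_n$ with $\pi(j)=n$. I would first record the following elementary fact: a set $S\subseteq\ZZ_n$ with $|S|<n$ is an interval in $\ZZ_n$ if and only if either $S$ is an interval in $\ZZ$ (i.e.\ a set of consecutive integers inside $[n]$) or its complement $[n]\setminus S$ is an interval in $\ZZ$; and since $n\in S$ forces the latter to reduce to an honest interval in $\ZZ$ whenever $1\notin S$, the ``wrap-around'' intervals are exactly those containing both $1$ and $n$. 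This dichotomy is what produces the two bulleted cases.

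Next I would examine the prefixes $\pi(1),\dots,\pi(m)$ for $m<j$ and for $m\ge j$ separately. For $m<j$, the value $n$ has not yet appeared, so the prefix $\{\pi(1),\dots,\pi(m)\}$ is a subset of $[n-1]$ of size $m$; it is an interval in $\ZZ_n$ iff it is an interval in $\ZZ$ or it is a wrap-around interval, i.e.\ contains $1$ and $n$ — but it cannot contain $n$, so for \emph{all} $m<j$ being an interval in $\ZZ_n$ is equivalent to being an interval in $\ZZ$. Requiring this for every $m\le j-1$ is precisely the statement that $\pi(1),\dots,\pi(j-1)$ is a union of consecutive integers built up one at a time, which is exactly left-unimodality of that sequence (by the interval characterization of $\L$ recalled in the excerpt). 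Its underlying value set, being an interval in $\ZZ$ of size $j-1$ contained in $[n-1]$, is either $[j-1]$ together with $j$ once we add $n$ later — more precisely, after the value $n$ is inserted at position $j$, the set $\{\pi(1),\dots,\pi(j)\}$ must be an interval in $\ZZ_n$ containing $n$, which means it is either $\{n-j+1,\dots,n\}$ or it is $\{1,\dots,j-1\}\cup\{n\}$, the unique wrap-around option; correspondingly the prefix values lie on $[n-1]\setminus[n-j]$ or on $[j-1]\subseteq[j]$. This is the source of the two cases.

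Then I would treat $m\ge j$: writing the suffix as $\pi(j+1),\dots,\pi(n)$, its value set is $[n-1]$ minus the prefix value set, and the condition that $\{\pi(1),\dots,\pi(m)\}$ is an interval in $\ZZ_n$ for every $m$ from $j$ to $n$ is equivalent, by passing to complements $\{\pi(m+1),\dots,\pi(n)\}$, to the condition that the suffix shrinks through a nested chain of $\ZZ_n$-intervals, and since none of these complementary sets contains $n$ they are all honest intervals in $\ZZ$; this is exactly right-unimodality of $\pi(j+1),\dots,\pi(n)$ on the appropriate value set ($[n-1]\setminus[j]$ in the first case, $[n-j]$ in the second). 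Conversely, if the prefix/suffix conditions of one of the bullets hold, I reverse the argument: every prefix of length $<j$ is an interval in $\ZZ$, the prefix of length $j$ is a $\ZZ_n$-interval containing $n$ by the value-set bookkeeping, and every longer prefix has complementary suffix an interval in $\ZZ$, hence is a $\ZZ_n$-interval; so $\pi\in\A_n$.

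I do not anticipate a genuine obstacle here — the statement is essentially a bookkeeping translation of the definition through the interval-in-$\ZZ_n$ versus interval-in-$\ZZ$ dichotomy. The one point that needs a little care is checking that the two cases are correctly matched up: that left-unimodality of the prefix \emph{forces} its value set to be one of the two claimed sets (rather than some other interval), which follows because the value set has size $j-1$, lies in $[n-1]$, and — together with the freshly inserted $n$ at position $j$ — must form a $\ZZ_n$-interval; the only size-$j$ intervals of $\ZZ_n$ through $n$ are $\{n-j+1,\dots,n\}$ and $\{n\}\cup[j-1]$. Making that step airtight, and observing that once the prefix value set is pinned down the suffix value set is forced, is the crux; everything else is routine.
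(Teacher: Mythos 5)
Your argument is correct, and it takes a genuinely different route from the paper. The paper offers no formal proof at all: it derives the observation by inspection of the geometric grid description, namely that $\A_n=C_n\L_{n-1}$ consists of the permutations drawable on the vertical rotations of the left-unimodal grid (Figure~\ref{fig:arc}), so the statement is ``read off'' the picture. You instead verify it directly from the defining property of $\A_n$ (every prefix is an interval in $\ZZ_n$) together with the interval characterizations of $\L$ and $\mathcal{R}$, using the dichotomy between honest intervals in $\ZZ$ and wrap-around intervals in $\ZZ_n$. Your key steps are all sound: prefixes of length $m<j$ omit $n$ and hence are $\ZZ_n$-intervals iff they are $\ZZ$-intervals, which gives left-unimodality of the prefix; the length-$j$ prefix forces its value set to be one of the two size-$j$ intervals of $\ZZ_n$ through $n$, which produces the two cases; and passing to complements handles $m>j$ and gives right-unimodality of the suffix. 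Your approach buys a self-contained, checkable proof (and reversibility for the converse), at the cost of more bookkeeping; the paper's buys brevity at the cost of leaning on the grid-class picture. One thing your careful bookkeeping actually surfaces: in the first bullet of the observation as printed, the prefix has $j-1$ entries while $[j]$ has $j$ elements, and the suffix has $n-j$ entries while $[n-1]\setminus[j]$ has only $n-1-j$; your analysis correctly identifies the value sets as $[j-1]$ and $[n-1]\setminus[j-1]$, so the paper's first bullet contains an off-by-one that your proof implicitly corrects. It would be worth stating that correction explicitly rather than writing $[j-1]\subseteq[j]$ and then reverting to the paper's sets for the suffix.
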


\subsection{A $\cDes$-preserving bijection} \label{sec:bijection}
Next we give a bijection between $D_{n-1,[i]}^{-1} C_n$ and $C_n D_{n-1,[i]}^{-1}$ that preserves the cyclic descent set. By allowing to $i$ range between $0$ and $n-2$ and using Equations~\eqref{eq:L-union} and~\eqref{eq:A-union}, we obtain a bijection between $\L_{n-1}C_n$ and $\A_n$. In fact, we will describe the bijection in this setting.

A key fact used in the construction below is that the  
descent set map $\Des$ is a bijection between $\L_{n-1}$ 
and the power set $2^{[n-2]}$. 

\begin{defn}\label{def:phi}
	Let $\phi:\ \L_{n-1} C_n\longrightarrow \A_n$ be the following map. Given  $\pi\in \L_{n-1} C_n$, let $j=\pi^{-1}(n)$  be the position of the letter $n$ in $\pi$, and construct $\phi\pi$ as follows:
	\begin{itemize}
		\item Let $(\phi \pi)(j)=n$.
		\item If $n\in \cDes(\pi)$, then the set of the leftmost $j-1$ entries in $\phi\pi$ is equal to $[j-1]$; otherwise, it is equal to $[n-1]\setminus[n-j]$.
		\item The order of the first $j-1$ entries in $\phi\pi$
		is given by the only left-unimodal permutation having descent set $\Des(\pi)\cap [j-2]$.
		\item The order of the last $n-j$ entries in $\phi\pi$ 
		is given by the only right-unimodal permutation having descent set $\{i-j:i\in\Des(\pi),i>j\}$.
		\end{itemize}	 
\end{defn}

\begin{example}\label{ex:phi}
	Take $43567281\in \L_{8}$ and $\pi=672819435\in \L_8 C_9$.
	Then $j=\pi^{-1}(9)=6$, so we set $(\phi\pi)(6)=9$. Since $9\not\in \cDes(\pi)$, the first  
	five entries in $\phi \pi$ are $[8]\setminus [3]=\{4,5,6,7,8\}$, and the last three entries are $\{1,2,3\}$.
	The order of the first five entries is given by the only left-unimodal permutation of $\{4,5,6,7,8\}$ with descent set $\Des(\pi)\cap [4]=\{2,4\}$, namely
	$67584$. 
	The order of the last three entries is given by the only right-unimodal permutation on $\{1,2,3\}$ with descent set $\{1\}$, namely
	$312$. Altogether, $\phi\pi= 675849312\in \A_9$. As expected, $\cDes(\phi\pi)=\{2,4,6,7\}=\cDes(\pi)$.
\end{example}

A pictorial description of the map $\phi$ is presented in Figure~\ref{fig:phi_arc}.
For $\pi \in \L_{n-1}  C_n$ with $\pi(j)=n$, the two cases correspond to $n\in \cDes(\pi)$ and $n\not\in \cDes(\pi)$, or  equivalently, to $\pi(1)<\pi(j+1)$ and $\pi(1)>\pi(j+1)$, respectively. Note that when $j=n$, we have $\phi\pi=\pi$.

\begin{figure}
\def\eps{.07}
\begin{center}
	\begin{tikzpicture}[scale=1]
	\draw[thin,dotted] (2,-\eps)--(2,4+\eps);
	\draw[thin,dotted] (-\eps,1)--(4+\eps,1);
	\draw[thin,dotted] (-\eps,3)--(4+\eps,3);
	\draw (-\eps,-\eps) rectangle (4+\eps,4+\eps);
	\coordinate (a1) at (0,3);
	\coordinate (a2) at (2,4);
	\coordinate (b1) at (0,1);
	\coordinate (b2) at (2,0);
	\coordinate (c1) at (2,2);
	\coordinate (c2) at (4,3);
	\coordinate (d1) at (2,2);
	\coordinate (d2) at (4,1);
	\draw[thick,red] (a1)+(\eps,\eps/2)--(a2)+(-\eps,-\eps/2); 
	\draw[thick,blue] (b1)+(\eps,-\eps/2)--(b2)+(-\eps,\eps/2);
	\draw[thick,green] (c1)+(\eps,\eps/2)--(c2)+(-\eps,-\eps/2);
	\draw[thick,brown] (d1)+(\eps,-\eps/2)--(d2)+(-\eps,+\eps/2); 
	\draw[fill] (2,4) circle (\eps);
	\draw[fill] (0,1) circle (\eps);
	\draw[fill] (c1)+(\eps,0) circle (\eps);
	
	\draw (5,2) node {$\overset{\phi}{\mapsto}$};
	
	\begin{scope}[shift={(6,0)}]
	\draw[thin,dotted] (2,-\eps)--(2,4+\eps);
	\draw[thin,dotted] (-\eps,1)--(4+\eps,1);
	\draw[thin,dotted] (-\eps,3)--(4+\eps,3);
	\draw (-\eps,-\eps) rectangle (4+\eps,4+\eps);
	\coordinate (a1) at (0,1);
	\coordinate (a2) at (2,2);
	\coordinate (b1) at (0,1);
	\coordinate (b2) at (2,0);
	\coordinate (c1) at (2,2);
	\coordinate (c2) at (4,3);
	\coordinate (d1) at (2,4);
	\coordinate (d2) at (4,3);
	\draw[thick,red] (a1)+(\eps,\eps/2)--(a2)+(-\eps,-\eps/2); 
	\draw[thick,blue] (b1)+(\eps,-\eps/2)--(b2)+(-\eps,\eps/2);
	\draw[thick,green] (c1)+(\eps,\eps/2)--(c2)+(-\eps,-\eps/2);
	\draw[thick,brown] (d1)+(\eps,-\eps/2)--(d2)+(-\eps,+\eps/2); 
	\draw[fill] (2,4) circle (\eps);
	\draw[fill] (0,1) circle (\eps);
	\draw[fill] (c2) circle (\eps);
	\end{scope}
	\end{tikzpicture}
	\vspace{20pt}
	
	\begin{tikzpicture}[scale=1]
	\draw[thin,dotted] (2,-\eps)--(2,4+\eps);
	\draw[thin,dotted] (-\eps,1)--(4+\eps,1);
	\draw[thin,dotted] (-\eps,3)--(4+\eps,3);
	\draw (-\eps,-\eps) rectangle (4+\eps,4+\eps);
	\coordinate (a1) at (0,3);
	\coordinate (a2) at (2,4);
	\coordinate (b1) at (0,1);
	\coordinate (b2) at (2,0);
	\coordinate (c1) at (2,2);
	\coordinate (c2) at (4,3);
	\coordinate (d1) at (2,2);
	\coordinate (d2) at (4,1);
	\draw[thick,red] (a1)+(\eps,\eps/2)--(a2)+(-\eps,-\eps/2); 
	\draw[thick,blue] (b1)+(\eps,-\eps/2)--(b2)+(-\eps,\eps/2);
	\draw[thick,green] (c1)+(\eps,\eps/2)--(c2)+(-\eps,-\eps/2);
	\draw[thick,brown] (d1)+(\eps,-\eps/2)--(d2)+(-\eps,+\eps/2); 
	\draw[fill] (2,4) circle (\eps);
	\draw[fill] (0,3) circle (\eps);
	\draw[fill] (c1)+(\eps,0) circle (\eps);
	
	\draw (5,2) node {$\overset{\phi}{\mapsto}$};
	
	\begin{scope}[shift={(6,0)}]
	\draw[thin,dotted] (2,-\eps)--(2,4+\eps);
	\draw[thin,dotted] (-\eps,1)--(4+\eps,1);
	\draw[thin,dotted] (-\eps,3)--(4+\eps,3);
	\draw (-\eps,-\eps) rectangle (4+\eps,4+\eps);
	\coordinate (a1) at (0,3);
	\coordinate (a2) at (2,4);
	\coordinate (b1) at (0,3);
	\coordinate (b2) at (2,2);
	\coordinate (c1) at (2,0);
	\coordinate (c2) at (4,1);
	\coordinate (d1) at (2,2);
	\coordinate (d2) at (4,1);
	\draw[thick,red] (a1)+(\eps,\eps/2)--(a2)+(-\eps,-\eps/2); 
	\draw[thick,blue] (b1)+(\eps,-\eps/2)--(b2)+(-\eps,\eps/2);
	\draw[thick,green] (c1)+(\eps,\eps/2)--(c2)+(-\eps,-\eps/2);
	\draw[thick,brown] (d1)+(\eps,-\eps/2)--(d2)+(-\eps,+\eps/2); 
	\draw[fill] (2,4) circle (\eps);
	\draw[fill] (0,3) circle (\eps);
	\draw[fill] (c2) circle (\eps);
	\end{scope}
	\end{tikzpicture}
\end{center}
\caption{The $\cDes$-preserving bijection 
$\phi:\ \L_{n-1} C_n\longrightarrow \A_n$.}
\label{fig:phi_arc}
\end{figure}

\begin{figure}
\def\NE{(.333,.167)}
\def\SE{(.333,-.167)}
\centering
	\begin{tikzpicture}[scale=1]
	\draw[thin,dotted] (2,-\eps)--(2,4+\eps);
	\draw[thin,dotted] (-\eps,1)--(4+\eps,1);
	\draw[thin,dotted] (-\eps,3)--(4+\eps,3);
	\draw (-\eps,-\eps) rectangle (4+\eps,4+\eps);
	\coordinate (a1) at (0,3);
	\coordinate (a2) at (2,4);
	\coordinate (b1) at (0,1);
	\coordinate (b2) at (2,0);
	\coordinate (c1) at (2,2);
	\coordinate (c2) at (4,3);
	\coordinate (d1) at (2,2);
	\coordinate (d2) at (4,1);
	\draw[thick,red] (a1)+(\eps,\eps/2)--(a2)+(-\eps,-\eps/2); 
	\draw[thick,blue] (b1)+(\eps,-\eps/2)--(b2)+(-\eps,\eps/2);
	\draw[thick,green] (c1)+(\eps,\eps/2)--(c2)+(-\eps,-\eps/2);
	\draw[thick,brown] (d1)+(\eps,-\eps/2)--(d2)+(-\eps,+\eps/2); 
	\draw[fill] (2,4) circle (\eps);
	\draw[fill] (0,1) circle (\eps);
	\draw[fill] (c1)+(\eps,0) circle (\eps);
	
	\draw[fill,blue] ($(b1)!1/6!(b2)$) circle (\eps);
	\draw[fill,red] ($(a1)!1/3!(a2)$) circle (\eps);
	\draw[fill,red] ($(a1)!1/2!(a2)$) circle (\eps);
	\draw[fill,red] ($(a1)!2/3!(a2)$) circle (\eps);
	\draw[fill,blue] ($(b1)!5/6!(b2)$) circle (\eps);
	
	\draw[fill,brown] ($(d1)!1/7!(d2)$) circle (\eps);
	\draw[fill,green] ($(c1)!2/7!(c2)$) circle (\eps);
	\draw[fill,brown] ($(d1)!3/7!(d2)$) circle (\eps);
	\draw[fill,brown] ($(d1)!4/7!(d2)$) circle (\eps);
	\draw[fill,brown] ($(d1)!5/7!(d2)$) circle (\eps);
	\draw[fill,green] ($(c1)!6/7!(c2)$) circle (\eps);
	
	\draw (5,2) node {$\overset{\phi}{\mapsto}$};
	
	\begin{scope}[shift={(6,0)}]
	\draw[thin,dotted] (2,-\eps)--(2,4+\eps);
	\draw[thin,dotted] (-\eps,1)--(4+\eps,1);
	\draw[thin,dotted] (-\eps,3)--(4+\eps,3);
	\draw (-\eps,-\eps) rectangle (4+\eps,4+\eps);
	\coordinate (a1) at (0,1);
	\coordinate (a2) at (2,2);
	\coordinate (b1) at (0,1);
	\coordinate (b2) at (2,0);
	\coordinate (c1) at (2,2);
	\coordinate (c2) at (4,3);
	\coordinate (d1) at (2,4);
	\coordinate (d2) at (4,3);
	\draw[thick,red] (a1)+(\eps,\eps/2)--(a2)+(-\eps,-\eps/2); 
	\draw[thick,blue] (b1)+(\eps,-\eps/2)--(b2)+(-\eps,\eps/2);
	\draw[thick,green] (c1)+(\eps,\eps/2)--(c2)+(-\eps,-\eps/2);
	\draw[thick,brown] (d1)+(\eps,-\eps/2)--(d2)+(-\eps,+\eps/2); 
	\draw[fill] (2,4) circle (\eps);
	\draw[fill] (0,1) circle (\eps);
	\draw[fill] (c2) circle (\eps);
	
	\draw[fill,blue] ($(b1)!1/6!(b2)$) circle (\eps);
	\draw[fill,red] ($(a1)!1/3!(a2)$) circle (\eps);
	\draw[fill,red] ($(a1)!1/2!(a2)$) circle (\eps);
	\draw[fill,red] ($(a1)!2/3!(a2)$) circle (\eps);
	\draw[fill,blue] ($(b1)!5/6!(b2)$) circle (\eps);
	
	\draw[fill,brown] ($(d1)!1/7!(d2)$) circle (\eps);
	\draw[fill,green] ($(c1)!2/7!(c2)$) circle (\eps);
	\draw[fill,brown] ($(d1)!3/7!(d2)$) circle (\eps);
	\draw[fill,brown] ($(d1)!4/7!(d2)$) circle (\eps);
	\draw[fill,brown] ($(d1)!5/7!(d2)$) circle (\eps);
	\draw[fill,green] ($(c1)!6/7!(c2)$) circle (\eps);
	
	\end{scope}
	\end{tikzpicture}
\caption{Example: $\phi(3\,2\,11\,12\,13\,1\,14\,8\,7\,9\,6\,5\,4\,10)=3\,2\,4\,5\,6\,1\,14\,13\,7\,12\,11\,10\,8\,9.$}
\end{figure}

\begin{lemma}
The map $\phi$ is a $\cDes$-preserving bijection between $\L_{n-1} C_n$ and $\A_n$, satisfying $\pi(n)=(\phi\pi)(n)$. In particular,
	\[
	\sum\limits_{\pi\in \A_n}{\bf x}^{\cDes(\pi)}t^{\pi^{-1}(n)}= 	\sum\limits_{\pi\in \L_{n-1} C_n}{\bf x}^{\cDes(\pi)}t^{\pi^{-1}(n)}.
	\]
\end{lemma}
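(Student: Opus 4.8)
The plan is to verify directly that the map $\phi$ from Definition~\ref{def:phi} is a well-defined bijection and that it preserves the cyclic descent set, with the final equidistribution identity then being an immediate consequence.

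First I would check that $\phi$ is \emph{well-defined}, i.e. that $\phi\pi$ is genuinely an arc permutation. Given $\pi\in\L_{n-1}C_n$ with $j=\pi^{-1}(n)$, Observation~\ref{obs:arc} characterizes arc permutations by the shape of the prefix and suffix around the position of $n$. By construction, $\phi\pi$ places $n$ in position $j$; its prefix of length $j-1$ is prescribed to be a left-unimodal sequence on either $[j-1]$ or $[n-1]\setminus[n-j]$, and its suffix of length $n-j$ is prescribed to be a right-unimodal sequence on the complementary set. This matches exactly one of the two alternatives in Observation~\ref{obs:arc}, so $\phi\pi\in\A_n$. Here I would invoke the key fact stated just before Definition~\ref{def:phi}: $\Des$ is a bijection from $\L_{n-1}$ onto $2^{[n-2]}$ (and similarly $\Des$ is a bijection from the right-unimodal permutations of any $m$-element interval onto $2^{[m-1]}$), which guarantees that the "only left-unimodal permutation with a given descent set" and the "only right-unimodal permutation with a given descent set" appearing in the definition actually exist and are unique. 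I also need to check that $j$ can validly be $n$, in which case $\pi$ fixes $n$, lies in $\L_{n-1}$, and $\phi\pi=\pi\in\A_n$ trivially.

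Next I would prove $\phi$ is a \emph{bijection} by constructing its inverse. Given $\sigma\in\A_n$ with $\sigma(j)=n$, by Observation~\ref{obs:arc} its prefix is left-unimodal on $[j-1]$ or on $[n-1]\setminus[n-j]$; the first case will correspond to $n\in\cDes(\pi)$ and the second to $n\notin\cDes(\pi)$. From $\sigma$ read off $D_1:=$ the descent set of the (standardized) prefix, a subset of $[j-2]$, and $D_2:=$ the descent set of the (standardized) suffix, a subset of $[n-j-1]$. Using the bijectivity of $\Des$ on $\L_{n-1}$, there is a unique $\pi\in\L_{n-1}$ (as a subset of $\symm_n$ fixing $n$, then rotated) with $\Des(\pi)\cap[j-2]=D_1$, $\{i-j:i\in\Des(\pi),\ i>j\}=D_2$, $n-1\notin\Des(\pi)$ if $n\in\cDes(\sigma)$ situation, and whose position of $n$ after rotation is $j$; more concretely one reconstructs $\pi c_n^{-j}\in\L_{n-1}$ from $D_1$, $D_2$, and the bit recording whether $n\in\cDes$. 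Running through the definitions shows this is a two-sided inverse of $\phi$.

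Then I would verify that $\phi$ is \emph{$\cDes$-preserving}. Write $\pi\in\L_{n-1}C_n$, $j=\pi^{-1}(n)$. Inside the prefix $\pi(1),\dots,\pi(j-1)$, a descent at position $r<j-1$ is preserved in $\phi\pi$ because the prefix of $\phi\pi$ has the same descent set $\Des(\pi)\cap[j-2]$; similarly descents strictly inside the suffix are preserved, since the suffix of $\phi\pi$ has descent set $\{i-j:i\in\Des(\pi),\ i>j\}$ shifted back. The positions $j-1$ and $j$ require case analysis: position $j-1$ (just before $n$) is never a descent of $\pi$ nor of $\phi\pi$ since $\pi(j)=(\phi\pi)(j)=n$ is largest; position $j$ (just after $n$) is always a descent of both, again because $n$ is largest. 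Finally, the \emph{cyclic} position $n$: $n\in\cDes(\pi)$ iff $\pi(n)>\pi(1)$, and the construction of $\phi$ is designed so that $n\in\cDes(\phi\pi)$ iff the prefix of $\phi\pi$ uses the top values $[n-1]\setminus[n-j]$ versus the bottom values $[j-1]$, which was chosen precisely to record the bit "$n\in\cDes(\pi)$." One must check these cases against Observation~\ref{obs:arc} carefully, using the horizontal-strip description in Example~\ref{ex:cdes-SYT} style reasoning for $\cDes$ on permutations (namely $\cDes(\pi)=\{i\in[n]:\pi(i)>\pi(i+1)\}$ with $\pi(n+1)=\pi(1)$). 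Finally, since $\phi$ preserves $\cDes$ and the position $\pi^{-1}(n)=(\phi\pi)^{-1}(n)$, summing ${\bf x}^{\cDes(\pi)}t^{\pi^{-1}(n)}$ over the two sets gives equal totals.

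The main obstacle I anticipate is the bookkeeping at the two boundary positions adjacent to $n$ and, especially, correctly matching the "top versus bottom interval" choice in the prefix of $\phi\pi$ with the cyclic descent bit $n\in\cDes(\pi)$ in \emph{both} directions of the bijection; everything else is a direct consequence of the bijectivity of $\Des$ on left- and right-unimodal permutations together with Observation~\ref{obs:arc}.
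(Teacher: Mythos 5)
Your proposal is correct and follows essentially the same route as the paper: well-definedness via Observation~\ref{obs:arc}, $\cDes$-preservation by construction (with the boundary/cyclic-position checks), and bijectivity by exhibiting the inverse built from the bijectivity of $\Des$ on left-unimodal permutations. The paper's inverse is phrased slightly differently --- it first rotates $\sigma$ horizontally so that $n$ sits in the last position and then reads off a single descent set, rather than standardizing prefix and suffix separately --- but this is the same construction in different bookkeeping.
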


\begin{proof}
By construction, $\cDes(\phi\pi)=\cDes(\pi)$, and by Observation~\ref{obs:arc}, $\phi\pi\in \A_n$.
To prove that $\phi$ is a bijection, it suffices to describe its inverse map.
Define $\psi:\ \A_n \longrightarrow  \L_{n-1} C_n$ as follows. Given $\sigma\in \A_n$, let $j=\sigma^{-1}(n)$ be the position of $n$.
	\begin{itemize}
		\item[1.] Let $\hat \sigma= \sigma c_n^{-j}$. 
		Notice that $\hat{\sigma}(n)=n$ and $\cDes(\hat \sigma)=-j+\cDes(\sigma) \bmod n$.
		
		\item[2.] Let $\des(\hat\sigma)=|\Des(\hat\sigma)|$. Create a permutation $\bar\sigma$ by placing the decreasing sequence $\des(\hat\sigma),\dots, 2,1$ in positions $1+\Des(\hat\sigma)=\{i+1:\ \hat\sigma(i)>\hat\sigma(i+1)\}$, and by placing the increasing sequence $1+\des(\hat\sigma), \dots,n-1,n$ in the remaining positions. 
		Notice that  $\bar\sigma$ may be identified, by ignoring $\bar\sigma(n)=n$, with a permutation in $\L_{n-1}$.
		
		\item[3.] Let $\psi \sigma = \bar \sigma c^{j}$.
		\end{itemize}
It is easy to verify that $\psi$ and $\phi$ are inverses of each other.
\end{proof}

\begin{example}
Let $\sigma= 675849312\in \A_9$ be the permutation from Example~\ref{ex:phi}. Then $j=6$, $\hat\sigma=312675849$, and $-6+\cDes(\pi)=-6+\{2,4,6,7\}=\{1,5,7,9\}=\cDes(\hat\pi)$, with addition modulo $9$.
Since $\Des(\hat\sigma)=\{1,5,7\}$, we place the decreasing sequence $3,2,1$ in positions $1+\Des(\hat\sigma)=\{2,6,8\}$, obtaining ${*}3{*}{*}{*}2{*}1{*}$, and an increasing sequence in the remaining positions, obtaining $\bar\sigma= 435672819\in \L_{8}\subseteq\symm_9$. Finally, $\psi\sigma=\bar\sigma c^{j}=
6728194356$.
\end{example}

\subsection{From arc permutations to SYT} Next we give a bijective proof of Corollary~\ref{cor:main_arc}. The proof, which relies on the bijection from Section~\ref{sec:bijection}, provides an explicit set of SYT on which $\cDes$ has the same distribution as it has on arc permutations (as in Definition~\ref{def:cSp}).

\begin{theorem}\label{thm:arcSYT}
	For every $n\ge 1$,
	\[
	\sum\limits_{\pi\in \A_n}{\bf x}^{\cDes(\pi)}=
		\sum\limits_{k=0}^{n-2}\sum\limits_{T\in \SYT((n-k-1,1^k) \oplus (1))} {\bf x}^{\cDes(T)}.
		\]
\end{theorem}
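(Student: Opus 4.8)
The plan is to first use the $\cDes$-preserving bijection $\phi\colon \L_{n-1}C_n\to\A_n$ established above (together with $(\phi\pi)(n)=\pi(n)$) to reduce the claim to the identity
\[
\sum_{\pi\in\L_{n-1}C_n}\bx^{\cDes(\pi)}=\sum_{k=0}^{n-2}\sum_{T\in\SYT((n-k-1,1^k)\oplus(1))}\bx^{\cDes(T)},
\]
and then to exhibit an explicit $\cDes$-preserving bijection $\Theta$ from $\L_{n-1}C_n$ to $\bigsqcup_{k=0}^{n-2}\SYT((n-k-1,1^k)\oplus(1))$.

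The first step is to parametrize the source. Every $\pi\in\L_{n-1}C_n$ can be written uniquely as $\pi=\sigma c_n^{\,j}$ with $\sigma\in\L_{n-1}$ (viewed in $\symm_n$ as fixing $n$) and $0\le j<n$: indeed $j$ is forced to be $n-\pi^{-1}(n)\bmod n$, and then $\sigma=\pi c_n^{-j}$. Since $\sigma(n)=n>\sigma(1)$, we have $\cDes(\sigma)=\Des(\sigma)\sqcup\{n\}$, and Remark~\ref{rem:rotation} (the relation $\cDes(\tau c_n^{-1})=1+\cDes(\tau)$, iterated, together with $c_n^{\,j}=c_n^{-(n-j)}$) gives
\[
\cDes(\pi)=-j+\bigl(\Des(\sigma)\sqcup\{n\}\bigr)\bmod n.
\]

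On the tableau side, for $\sigma\in\L_{n-1}$ put $k:=\des(\sigma)$ and let $T_\sigma\in\SYT((n-k-1,1^k)\oplus(1))$ be the tableau whose disconnected northeast cell contains $n$, whose corner cell contains $1$, whose first column below the corner contains $\{d+1:d\in\Des(\sigma)\}$, and whose first row contains the remaining values of $\{2,\dots,n-1\}$ in increasing order. A direct inspection shows $\Des(T_\sigma)=\Des(\sigma)$; since $|\Des(T_\sigma)|=k$, the formula of Example~\ref{ex2:cdes-SYT} yields $\cDes(T_\sigma)=\Des(\sigma)\sqcup\{n\}=\cDes(\sigma)$, and $\delta(T_\sigma)=n$. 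Because $\Des\colon\L_{n-1}\to 2^{[n-2]}$ is a bijection (as recalled in Section~\ref{sec:bijection}), the assignment $\sigma\mapsto T_\sigma$ is a bijection from $\{\sigma\in\L_{n-1}:\des\sigma=k\}$ onto $\{T\in\SYT((n-k-1,1^k)\oplus(1)):\delta(T)=n\}$.

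To finish, I would bring in the equivariance map $\psi$ of Example~\ref{ex2:cdes-SYT}. Since $\delta(\psi^{\ell}T)=\ell+\delta(T)\bmod n$, the relation $\psi^mT=T$ forces $n\mid m$, so $\psi$ acts freely on each $\SYT((n-k-1,1^k)\oplus(1))$: every orbit has size $n$ and contains exactly one tableau with $\delta=n$, which by the previous paragraph is some $T_\sigma$. Hence $\bigsqcup_{k=0}^{n-2}\SYT((n-k-1,1^k)\oplus(1))=\{\psi^{-j}(T_\sigma):\sigma\in\L_{n-1},\ 0\le j<n\}$, each element occurring exactly once (recover $j$ from $\delta$, then $\sigma$). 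Defining $\Theta(\sigma c_n^{\,j}):=\psi^{-j}(T_\sigma)$ therefore gives a bijection $\L_{n-1}C_n\to\bigsqcup_{k=0}^{n-2}\SYT((n-k-1,1^k)\oplus(1))$ with $\cDes(\Theta\pi)=-j+\cDes(T_\sigma)=-j+\cDes(\sigma)=\cDes(\pi)$, which is exactly the desired identity. (Alternatively, once one knows both sides carry cyclic descent extensions, the identity also follows non-bijectively from Lemma~\ref{lem:ARR1} by matching $\Des$-distributions, but the construction of $\Theta$ is the bijective content promised by the section.)

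The step I expect to be the main obstacle is the verification that $\Des(T_\sigma)=\Des(\sigma)$: one must check that choosing $\{d+1:d\in\Des(\sigma)\}$ as the first-column entries produces descents precisely at the positions of $\Des(\sigma)$ inside the hook, and that the disconnected cell $n$—lying in a row above the hook—does not create a spurious descent at $n-1$. Everything else is bookkeeping with the rotation index $j$ and the freeness of $\psi$, the latter being immediate from the behaviour of the statistic $\delta$.
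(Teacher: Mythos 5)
Your proposal is correct and follows essentially the same route as the paper: the tableau $T_\sigma$ with $n$ in the disconnected cell and first column $\{1\}\cup(1+\Des(\sigma))$, extended by the rotation/equivariance action, is exactly the paper's bijection $f$ on $D_{n-1,[k]}^{-1}C_n=\{\sigma\in\L_{n-1}:\des(\sigma)=k\}C_n$, and your verifications of $\Des(T_\sigma)=\Des(\sigma)$ and of injectivity via $\delta$ are the details the paper leaves to the reader. The only organizational difference is that you pass from $\A_n$ to $\L_{n-1}C_n$ via the explicit bijection $\phi$ rather than citing Theorem~\ref{thm:equid-main1} for the equality of the $\cDes$-distributions on $C_nD_{n-1,[k]}^{-1}$ and $D_{n-1,[k]}^{-1}C_n$; the paper does precisely this composition $f\circ\phi^{-1}$ in the remark immediately following its proof.
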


\begin{proof}
Using Equation~\eqref{eq:A-union}, it suffices to show that for every $0\le k< n-1$,
		\[
		\sum\limits_{\pi\in C_n D_{n-1,[k]}^{-1}}{\bf x}^{\cDes(\pi)}=
			\sum\limits_{\pi\in D_{n-1,[k]}^{-1}C_n}{\bf x}^{\cDes(\pi)}=
			\sum\limits_{T\in \SYT((n-k-1,1^k) \oplus (1))} {\bf x}^{\cDes(T)}.
		\]
The left equality follows from Theorem~\ref{thm:equid-main1}.
To prove the right equality,
we construct a $\cDes$-preserving bijection $f$ between 
$D_{n-1,[k]}^{-1}C_n$ and $\SYT((n-k-1,1^k) \oplus (1))$.

Given $\sigma\in D_{n-1,[k]}^{-1}C_n$, write $\sigma=\tau c_n^{-j}$, where $\tau\in D_{n-1,[k]}^{-1}\subseteq \symm_n$ and $0\le j<n$. Let $f(\tau)$ be the SYT of shape $(n-k-1,1^k) \oplus (1)$ having entry $n$ in the northeast component, and whose first column consists of the entry $1$ and the elements in the set $1+\Des(\tau)$. Then let
\[
f(\sigma) = j+ f(\tau),
\]
with addition defined 
as in Example~\ref{ex2:cdes-SYT}.
Finally, apply the definition of $\cDes$ in Equation~\eqref{eq:cDes_near_hook} 
to verify that $f$ is a $\cDes$-preserving bijection.
\end{proof}

Composing the above bijection $f$ with the bijection
$\phi^{-1}: \A_n \to \L_{n-1} C_n$ from Definition~\ref{def:phi}, we obtain a $\cDes$-preserving bijection from $\A_n$ 
to the set 
$\bigsqcup_{k=0}^{n-2}  \SYT((n-k-1,1^k) \oplus (1))$.
An example is shown in Figure~\ref{fig:arc_to_SYT}.

\newcommand{\downmapsto}{\rotatebox[origin=c]{-90}{$\mapsto$}\mkern2mu}
\newcommand{\upmapsto}{\rotatebox[origin=c]{90}{$\mapsto$}\mkern2mu}

	\begin{figure}[htb]
\centering
		\def\eps{.08}
			\begin{tikzpicture}[scale=.8]
			
						\draw[thin,dotted] (2,-\eps)--(2,4+\eps);
						\draw[thin,dotted] (-\eps,1)--(4+\eps,1);
						\draw[thin,dotted] (-\eps,3)--(4+\eps,3);
						\draw (-\eps,-\eps) rectangle (4+\eps,4+\eps);
						\coordinate (a1) at (0,1);
						\coordinate (a2) at (2,2);
						\coordinate (b1) at (0,1);
						\coordinate (b2) at (2,0);
						\coordinate (c1) at (2,2);
						\coordinate (c2) at (4,3);
						\coordinate (d1) at (2,4);
						\coordinate (d2) at (4,3);
						\draw[thick,red] (a1)+(\eps,\eps/2)--(a2)+(-\eps,-\eps/2); 
						\draw[thick,blue] (b1)+(\eps,-\eps/2)--(b2)+(-\eps,\eps/2);
						\draw[thick,green] (c1)+(\eps,\eps/2)--(c2)+(-\eps,-\eps/2);
						\draw[thick,brown] (d1)+(\eps,-\eps/2)--(d2)+(-\eps,+\eps/2); 
						\draw[fill] (2,4) circle (\eps);
						\draw[fill] (0,1) circle (\eps);
											\draw[fill] (0.66,0.64) circle (\eps);
												\draw[fill] (1.33,1.66) circle (\eps);
												\draw[fill] (3,2.5) circle (\eps);
						\draw[fill] (c2) circle (\eps);
                        \draw (2,4.6) node {$\pi=213645\in C_6 D_{5,[2]}^{-1}\subseteq\A_6$};
			
			\draw (5,2) node {$\overset{\phi^{-1}}{\mapsto}$};
			
			\begin{scope}[shift={(6,0)}]
			
						\draw[thin,dotted] (2,-\eps)--(2,4+\eps);
						\draw[thin,dotted] (-\eps,1)--(4+\eps,1);
						\draw[thin,dotted] (-\eps,3)--(4+\eps,3);
						\draw (-\eps,-\eps) rectangle (4+\eps,4+\eps);
						\coordinate (a1) at (0,3);
						\coordinate (a2) at (2,4);
						\coordinate (b1) at (0,1);
						\coordinate (b2) at (2,0);
						\coordinate (c1) at (2,2);
						\coordinate (c2) at (4,3);
						\coordinate (d1) at (2,2);
						\coordinate (d2) at (4,1);
						\draw[thick,red] (a1)+(\eps,\eps/2)--(a2)+(-\eps,-\eps/2); 
						\draw[thick,blue] (b1)+(\eps,-\eps/2)--(b2)+(-\eps,\eps/2);
						\draw[thick,green] (c1)+(\eps,\eps/2)--(c2)+(-\eps,-\eps/2);
						\draw[thick,brown] (d1)+(\eps,-\eps/2)--(d2)+(-\eps,+\eps/2); 
						\draw[fill] (2,4) circle (\eps);
						\draw[fill] (0,1) circle (\eps);
						\draw[fill] (c1)+(\eps,0) circle (\eps);			
							\draw[fill] (0.66,0.64) circle (\eps);
							\draw[fill] (3,2.5) circle (\eps);
								\draw[fill] (1.33,3.66) circle (\eps);
                        \draw (2,4.6) node {$\sigma=215634\in D_{5,[2]}^{-1} C_6$};
			\end{scope}
			
						\draw (8,-.8) node {$\downmapsto\scriptstyle{\cdot c_6^{4}}$};
			
			\begin{scope}[shift={(6,-5.7)}]
						
						\draw[thin,dotted] (2,-\eps)--(2,4+\eps);
						\draw[thin,dotted] (-\eps,1)--(4+\eps,1);
						\draw[thin,dotted] (-\eps,3)--(4+\eps,3);
						\draw (-\eps,-\eps) rectangle (4+\eps,4+\eps);
						\coordinate (a1) at (2,3);
						\coordinate (a2) at (4,4);
						\coordinate (b1) at (2,1);
						\coordinate (b2) at (4,0);
						\coordinate (c1) at (0,2);
						\coordinate (c2) at (2,3);
						\coordinate (d1) at (0,2);
						\coordinate (d2) at (2,1);
						\draw[thick,red] (a1)+(\eps,\eps/2)--(a2)+(-\eps,-\eps/2); 
						\draw[thick,blue] (b1)+(\eps,-\eps/2)--(b2)+(-\eps,\eps/2);
						\draw[thick,green] (c1)+(\eps,\eps/2)--(c2)+(-\eps,-\eps/2);
						\draw[thick,brown] (d1)+(\eps,-\eps/2)--(d2)+(-\eps,+\eps/2); 
						\draw[fill] (4,4) circle (\eps);
						\draw[fill] (2,1) circle (\eps);
						\draw[fill] (c1)+(\eps,0) circle (\eps);
													\draw[fill] (2.66,0.64) circle (\eps);
													\draw[fill] (1,2.5) circle (\eps);
													\draw[fill] (3.33,3.66) circle (\eps);			
	                    \draw (2,-.5) node {$\tau=342156\in  D_{5,[2]}^{-1}$};
						
						\end{scope}

\draw (11,2) node {$\overset{f}{\mapsto}$};

\begin{scope}[shift={(12,-5.7)}]

\draw  (1,2) node  {$\young(:::6,125,3,4)$};

\end{scope}

\draw (11,-3.7) node {$\overset{f}{\mapsto}$};

\draw (13,-.8) node {$\upmapsto\scriptstyle{+4}$};

\begin{scope}[shift={(12,0)}]

\draw  (1,2) node  {$\young(:::4,136,2,5)$};
\draw (2,4.6) node {$f(\sigma)\in\SYT((3,1^2)\oplus(1))$};

\end{scope}

			\end{tikzpicture}
	\caption{An example of the $\cDes$-preserving bijections $C_6 D_{5,[2]}^{-1}\overset{\phi^{-1}}{\to}D_{5,[2]}^{-1}C_6\overset{f}{\to} \SYT((3,1^2)\oplus(1))$.} 

\label{fig:arc_to_SYT}	
	\end{figure}

	\end{document}